\numberwithin{equation}{section}					
\numberwithin{table}{section}						
\newcommand{\RR}{\mathbb{R}}
\DeclareMathOperator{\dof}{dof}
\DeclareMathOperator{\diag}{diag}
\DeclareMathOperator{\grad}{grad}
\DeclareMathOperator{\dive}{div}
\DeclareMathOperator{\crl}{curl}
\DeclareMathOperator{\add}{add}
\DeclareMathOperator{\mult}{mult}
\newcommand{\ekn}[1]{\textcolor{black}{#1}} 
\newcommand{\mesh}{\mathcal{T}_h}
\newcommand{\mcA}{\mathcal{A}}
\newcommand{\mcF}{\mathcal{F}}
\newcommand{\mcE}{\mathcal{E}}
\newcommand{\mcN}{\mathcal{N}}
\newcommand{\Bf}{\mathcal{B}}
\newcommand{\nv}{\vec{n}}
\renewcommand{\nu}{\vec{n}}
\newcommand{\tv}{\vec{t}}
\newcommand{\xv}{\vec{x}}
\newcommand{\yv}{\vec{y}}
\newtheorem{assumption}{Assumption}
\newtheorem{remark}{Remark}
\newtheorem{theorem}{Theorem}
\newtheorem{lemma}[theorem]{Lemma}
\newtheorem{corollary}[theorem]{Corollary}
\numberwithin{assumption}{section}						
\numberwithin{remark}{section}
\numberwithin{theorem}{section}						
\begin{document} 


\title[Nodal Auxiliary Space Preconditioners for Mixed Virtual Element Methods]{Nodal Auxiliary Space Preconditioners for Mixed Virtual Element Methods}
\thanks{WMB was supported by the European Union’s Horizon 2020 research and innovation programme under the Marie Skłodowska-Curie grant agreement No. 101031434 – MiDiROM. EN was supported by Swedish Research Council Grant No. 2018-05262 and the Wallenberg Academy Fellowship KAW 2019.0190.}

\author{Wietse M. Boon}
\address{NORCE Norwegian Research Centre, N-5838 Bergen, Norway, E-mail: wibo@norceresearch.no}

\author{Erik Nilsson}
\address{Department of Mathematics, KTH Royal Institute of Technology,  SE-100 44 Stockholm, Sweden. E-mail: erikni6@kth.se}

\begin{abstract}
	We propose nodal auxiliary space preconditioners for facet and edge virtual elements of lowest order by deriving discrete regular decompositions on polytopal grids and generalizing the Hiptmair-Xu preconditioner to the virtual element framework. The preconditioner consists of solving a sequence of elliptic problems on the nodal virtual element space, combined with appropriate smoother steps. Under assumed regularity of the mesh, the preconditioned system is proven to have bounded spectral condition number independent of the mesh size and this is verified by numerical experiments on a sequence of polygonal meshes. Moreover, we observe numerically that the preconditioner is robust on meshes containing elements with high aspect ratios.
\end{abstract}

\subjclass[2010]{65N30, 65N85, 65N22}
\keywords{auxiliary space preconditioning, Hiptmair-Xu preconditioner, mixed virtual element methods}

\maketitle

\section{Introduction}\label{sec: intro}

The Virtual Element Method (VEM) is a discretization method that handles polytopal grids without requiring explicit knowledge of the local basis functions. The method was initially introduced as a generalization of the Finite Element Method, leading to discrete spaces that are proper subspaces of the Sobolev space $H^1$ \cite{Hitchhiker14,beirao2013basic}. Subsequently, well-known \emph{mixed} finite elements were generalized to the VEM framework and discrete subspaces of $H^{\dive}$ and $H^{\crl}$ have been proposed, with applications in porous media flow and electromagnetism \cite{VeiBeiBrezzMarRuss2015,Brezzi2014VEMMixedPrinciples,DaVeiga2022Maxwell,VeiBeiBrezzMarRuss16}.

Similar to (mixed) finite element methods, the condition number of the linear system obtained from VEM depends on the quality of the mesh, in particular on the mesh size and the aspect ratios of its elements. This poses a problem for iterative solvers in case the problem is too large for direct solvers to handle. The performance of these solvers is directly affected by the ill conditioning of the system matrix and smaller mesh sizes may lead to larger numbers of iterations before convergence. The main objective of this work is to construct preconditioners for systems discretized by mixed virtual element methods that are robust with respect to the mesh size. 

We follow the framework of auxiliary space preconditioning \cite{xu1996auxiliary}, \ekn{which has been shown to be an effective method for $H^{\dive}$ and $H^{\crl}$ problems \cite{Kolev2009,Kolev2012}.} In particular we formulate a generalization of the Hiptmair-Xu preconditioner \cite{HiptXu07} for mixed virtual element methods of lowest order. The main advantage of this approach is that the original problem, posed on facet or edge spaces, is replaced by a series of elliptic problems posed on the nodal virtual space. The nodal space is the most well understood virtual element space and several efficient iterative solvers for such problems are available in the literature \cite{antonietti2018multigrid,bertoluzza2017bddc,calvo2018approximation}.

It is important to note that the theoretical results presented herein only cover regular meshes containing convex elements with mild aspect ratios. However, the Hiptmair-Xu preconditioner in the case of finite elements often performs robustly in the presence of badly shaped elements as well \cite{budisa2020mixed}. If this behavior is inherited to the virtual element case, then the proposed preconditioner will enable us to handle a wider class of polyhedral grids. In this context, we are interested in grids that are adapted from a background mesh to conform to interfaces or features that are arbitrarily placed in the computational domain. Such grid adaptations often result in arbitrarily shaped polygons and a robust preconditioner may be key in handling the corresponding, ill-conditioned systems. We investigate these possibilities numerically in \Cref{sec:numex}.

We note that the virtual element method can handle elements with high aspect ratios by augmenting the stabilization term, see e.g. \cite{mascotto2023VEMstabilizationReview} for an overview on this strategy. In contrast, our preconditioning approach leaves the discretization method intact and is therefore less intrusive.
We furthermore mention that parallel solvers for mixed virtual element methods were previously proposed in \cite{dassi2020parallel}. In comparison to that work, our proposed preconditioners have a strong theoretical foundation, which allows us to rigorously prove robustness with respect to the mesh size.
Finally, auxiliary space preconditioners for linear, nodal VEM were introduced in \cite{zhu2020auxiliary}. The difference with that work is that we propose a construction based on a regular decomposition, instead of a simplicial subtesselation of the mesh.

The main contributions of this work are summarized as follows:
\begin{itemize}
	\item We derive stable, regular decompositions of the facet and edge virtual element spaces of lowest order.
	\item We propose a Hiptmair-Xu preconditioner for elliptic problems that consists of continuous transfer operators, continuous smoothers, and a sequence of elliptic problems on the nodal virtual element space. The condition number of the preconditioned system is proven to be independent of the mesh size.
	\item Numerical experiments in 2D validate the theoretical results and moreover indicate robust performance in the presence of elements with high aspect ratios.
\end{itemize}

Moreover, the following observations serve as additional contributions. First, we use the regular decomposition to provide an alternative proof that the lowest order conforming VE spaces form an exact co-chain complex in \Cref{thm:discrete exact complex}. Second, we observe in \Cref{sub: smoother} that the stabilization term that is common to virtual element methods has the sufficient properties to serve as a suitable smoother. Third, we used the smoothers to derive inverse inequalities for the lowest order facet and edge virtual element spaces in \Cref{cor: inverse inequalities}.

This article is organized as follows. First, \Cref{sec: Preliminaries} introduces the relevant concepts from Sobolev theory, differential complexes, and VEM. In \Cref{sec: VEM Decomposition}, we place the lowest order VE spaces in the context of co-chain complexes and prove the discrete regular decomposition. \ekn{We note that the sections preceding \Cref{sec:cochain} primarily concern known results, whereas our main contributions are presented afterward.} The framework of auxiliary space preconditioning is introduced in \Cref{sec:auxprecond}; we construct the preconditioner and prove its robustness. \Cref{sec:numex} contains the numerical experiments that validate the theory and showcase the performance of the preconditioner. Conclusions are presented in \Cref{sec:conclusion}.

\section{Preliminaries and notation}
\label{sec: Preliminaries}

In this section, we introduce the notation conventions and present several key results from functional analysis and VE theory. 
The definitions concerning the relevant function spaces are summarized at the end of the section in \Cref{tab:spaces_summary}, for convenience.

\subsection{Continuous function spaces} \label{sub: continuous spaces}

Let $\Omega\subset \RR^n$ with $n\in\{2,3\}$ be a bounded, contractible domain with Lipschitz boundary $\partial\Omega$. Let $\nv$ denote the outward oriented unit vector that is normal to $\partial\Omega$. 
$L^2(\Omega)$ denotes the Hilbert space consisting of square-integrable functions on $\Omega$, endowed with inner product $(\cdot,\cdot)$ and norm $\|\cdot\|$. 
With a slight abuse of notation, we reuse $(\cdot,\cdot)$ and $\|\cdot\|$ for analogous inner product and norm on vector-valued distributions.
We moreover define:
\begin{subequations}
\begin{align}
H^{1}(\Omega) &:= \left\{v \in L^2(\Omega) : \ \grad v \in [L^2(\Omega)]^n\right\}, &
H^{\dive}(\Omega) &:= \left\{v \in  [L^2(\Omega)]^n : \ \dive v \in L^2(\Omega)\right\}, \\
H^{\crl}(\Omega) &:= \begin{cases}
	\left\{v \in  [L^2(\Omega)]^3 : \ \crl v \in [L^2(\Omega)]^3\right\},\ &n=3, \\
	H^1(\Omega),\ &n=2.
\end{cases}
\end{align}
\end{subequations}


The subspaces relevant for problems with essential boundary conditions are defined as follows:
\begin{subequations} \label{eqs: H0 spaces}
	\begin{align}
		H^{1}_0(\Omega) &:= \left\{v \in H^1(\Omega) : \ v|_{\partial\Omega}=0 \right\}, &
		H_0^{\dive}(\Omega) &:= \left\{v \in  H^{\dive}(\Omega) : \ v\cdot \nv |_{\partial \Omega}=0\right\}, \\
		H_0^{\crl}(\Omega) &:= \begin{cases}
			\left\{v \in  H^{\crl}(\Omega) : \ \nv\times v |_{\partial\Omega}=0\right\},\ &n=3, \\
			H_0^1(\Omega),\ &n=2.
		\end{cases}& 
		L^2(\Omega) / \RR &:= \left\{v \in L^2(\Omega) : \ \int_{\Omega} v=0 \right\}.
	\end{align}
\end{subequations}

\begin{remark}[The two-dimensional $\crl$]
	While we aim to employ a unified treatment with respect to dimension, the definition of the $\crl$ in 2D requires some additional details. 
    For a scalar field $p$ and a vector field $v=(v_1,v_2)$ we define, respectively, 
    \begin{subequations}
    \begin{align} 
        \crl p &:= \grad^{\perp} p = (-\partial_2 p,\partial_1 p), &
        \crl v &:= \dive v^{\perp} = \partial_2 v_1-\partial_1v_2,
        \end{align}
    \end{subequations}
    in which the superscript $\perp$ denotes a counter clockwise rotation of a vector by $\pi/2$.
\end{remark}

\subsection{Model problems}
\label{sub: Model problems}

The problems of interest are the projection problems onto the Sobolev spaces introduced in \Cref{sub: continuous spaces}. Namely, given $f\in [L^2(\Omega)]^n$,
\begin{align}
	\text{Find } u &\in H^{\dive}(\Omega) : & (u,v) + (\dive u,\dive v) &= (f,v), &\forall v&\in H^{\dive}(\Omega),\label{eq:divdiv} \\
	\text{Find } u &\in H^{\crl}(\Omega) : & (u,v) + (\crl u,\crl v) &= (f,v), &\forall v&\in H^{\crl}(\Omega).
\end{align}

For the numerical validation of the preconditioner, we will in addition to \eqref{eq:divdiv} consider the mixed formulation of the Poisson problem, also known as the Darcy problem. In weak form it reads as follows: given $f \in L^2(\Omega)^n$ and $g \in L^2(\Omega)$, find $u\in H^{\dive}(\Omega)$ and $p\in L^2(\Omega)$ such that
\begin{subequations}\label{eq:darcy}
\begin{align}
	(u,v) - (p,\dive v) &= (f,v), &\forall v&\in H^{\dive}(\Omega), \\
	-(q,\dive u) &= (g,q), &\forall q&\in L^2(\Omega).
\end{align}
\end{subequations}

\subsection{Exact co-chain complexes}\label{sec:cochaincomplexes}

A co-chain complex $(V^\bullet, d^\bullet)$ is a sequence of linear spaces $V^k$ and differential maps $d^k:V^k\to V^{k+1}$ with the property $d^{k + 1}d^k = 0$ for all $k$. We will omit the superscript on $d$ for notational brevity when $k$ is clear from context. A co-chain complex can be illustrated as the following sequence,
in which two consecutive steps in the diagram map to zero:
\begin{align} \label{eq: generalcomplex}
	\dots \rightarrow V^{k - 1} \overset{d}{\to} V^k \overset{d}{\to} V^{k + 1} \to \ldots
\end{align}

We will focus on complexes that only have non-trivial $V^k$ for $0 \le k \le n$ with $n$ the dimension of the domain $\Omega$. 
For $n = 3$, we thus consider a specific instance of \eqref{eq: generalcomplex} given by:
\begin{align} \label{eq:derhamcomplex}
    0 \rightarrow V^{n-3} \overset{d}{\to} V^{n-2} \overset{d}{\to} V^{n-1} \overset{d}{\to} V^n \to 0.
\end{align}
In the following, we will thus have $V^k = 0$ and $d^k = 0$ for all $k \not \in [0, n]$.

Since $dd=0$ by definition, we have $dV^k \subseteq \ker_d V^{k+1}$ in which $\ker_d V^{k+1}$ denotes the null space $\{v \in V^{k+1} : dv = 0\}$.
We refer to the co-chain complex as \emph{exact} if the converse inclusion holds as well, i.e. if
\begin{align}
	dV^k = \ker_d V^{k+1},
\end{align}

In this work, we focus on spaces $V^k$ that are subsets of $L^2(\Omega)$ and we therefore endow each $V^k$ with the following graph norm:
\begin{align} \label{eq: graph norm}
	\|v\|^2_{V^k} := \|v\|^2+\|dv\|^2.
\end{align} 

The following important result follows directly from the exactness of a co-chain complex. 
\begin{lemma}[{Poincaré inequality \cite[Thm. 4.6]{ArnFEEC18}}] \label{lem: Poincare}
	If a co-chain complex $(V^\bullet, d)$ is exact, then:
    \begin{align}
        \|v\| &\lesssim \|dv\|, &
        \forall v &\perp \ker_d V^k.
    \end{align}
\end{lemma}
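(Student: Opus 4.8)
The plan is to reduce the inequality to an application of the bounded inverse theorem, following the standard Hilbert-complex argument. I interpret the orthogonality $v \perp \ker_d V^k$ in the $L^2$ inner product $(\cdot,\cdot)$; note that for any $w \in \ker_d V^k$ one has $(v,w)_{V^k} = (v,w) + (dv,dw) = (v,w)$, so this orthogonality coincides with orthogonality in the graph inner product. I abbreviate $Z := \ker_d V^k$ and $W := \{v \in V^k : v \perp Z\}$. Since $Z$ is closed in $V^k$ (it is the kernel of the continuous map $d$), the space splits orthogonally as $V^k = Z \oplus W$, and it therefore suffices to prove the estimate for $v \in W$.

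Next I would analyze the restriction $d|_W : W \to V^{k+1}$. It is injective, because its kernel is $Z \cap W = \{0\}$. Its range equals $dV^k$, and here exactness enters decisively: by hypothesis $dV^k = \ker_d V^{k+1}$, which is closed in $L^2$ (again as a kernel). Thus $d|_W$ is a continuous linear bijection from the Hilbert space $(W, \|\cdot\|_{V^k})$ onto the Hilbert space $(\ker_d V^{k+1}, \|\cdot\|)$; continuity follows from $\|dv\| \le \|v\|_{V^k}$, and $W$ is complete as a closed subspace of $V^k$, since its defining orthogonality conditions pass to graph-norm limits.

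I would then invoke the bounded inverse theorem: a continuous bijection between Banach spaces has a continuous inverse. This produces a constant $C$ with $\|v\|_{V^k} \le C \|dv\|$ for every $v \in W$. Since $\|v\| \le \|v\|_{V^k}$, the desired bound $\|v\| \lesssim \|dv\|$ follows immediately.

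The step I expect to be the crux is the closedness of the range $dV^k$, which is precisely what exactness supplies by identifying it with the kernel $\ker_d V^{k+1}$; without closed range the inverse need not be continuous and no uniform constant could be extracted. In the finite-dimensional VE setting this closedness is automatic, and the bounded inverse theorem could be replaced by the elementary fact that an injective linear map between finite-dimensional spaces is bounded below; I would nonetheless retain the Hilbert-space formulation so that the argument simultaneously covers the continuous complexes to which the lemma is applied.
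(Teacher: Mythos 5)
Your proof is correct and follows essentially the same route as the source: the paper does not prove this lemma itself but cites it from the literature, and the standard proof there is exactly your argument --- split off $\ker_d V^k$ orthogonally, note that exactness identifies the range $dV^k$ with the closed subspace $\ker_d V^{k+1}$, and apply the bounded inverse theorem to the resulting continuous bijection. You correctly identify closedness of the range as the crux, and your observation that $L^2$-orthogonality to the kernel coincides with graph-norm orthogonality is the right way to reconcile the two inner products.
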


A consequence of \Cref{lem: Poincare} is the following result.
\begin{lemma}[Stable potentials]\label{lem:stable_pot}
    Let $(V^\bullet, d)$ be an exact co-chain complex. Then, for $v\in V^{k}$ with $dv=0$, there exists a potential $u\in V^{k-1}$ that satisfies
    \begin{align}
        du &= v, &
        \|u\|_{V^{k - 1}} &\lesssim \|v\|.
    \end{align}
\end{lemma}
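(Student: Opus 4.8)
The plan is to first use exactness to produce \emph{some} potential for $v$, and then to correct that potential by removing its kernel component, so that what remains is controlled by the Poincaré inequality of \Cref{lem: Poincare}.

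Concretely, I would proceed as follows. Since $dv=0$, the element $v$ lies in $\ker_d V^k$, and exactness $dV^{k-1}=\ker_d V^k$ immediately furnishes \emph{a} preimage $w\in V^{k-1}$ with $dw=v$. This $w$, however, carries no norm bound, since any element of $\ker_d V^{k-1}$ may be added to it without changing its image. To fix this, I would orthogonally decompose
\begin{align}
  w = z + u, \qquad z\in\ker_d V^{k-1}, \quad u\perp\ker_d V^{k-1},
\end{align}
using the orthogonal projection onto the (closed) subspace $\ker_d V^{k-1}\subseteq V^{k-1}$. Because $dz=0$, the corrected potential satisfies $du=dw-dz=v$, so $u$ is still a genuine potential for $v$.

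The stability bound then follows in two short steps. First, since $u\perp\ker_d V^{k-1}$, \Cref{lem: Poincare} gives $\|u\|\lesssim\|du\|=\|v\|$. Second, expanding the graph norm,
\begin{align}
  \|u\|_{V^{k-1}}^2 = \|u\|^2+\|du\|^2 = \|u\|^2+\|v\|^2 \lesssim \|v\|^2,
\end{align}
which is exactly the claimed estimate $\|u\|_{V^{k-1}}\lesssim\|v\|$.

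The only delicate point is the orthogonal decomposition: it requires $\ker_d V^{k-1}$ to be a closed subspace of the space $V^{k-1}$ equipped with $\|\cdot\|_{V^{k-1}}$, so that the projection is well defined. This is automatic in the settings of interest, since the discrete VE spaces are finite dimensional and the continuous Sobolev spaces are complete with closed kernels; I therefore expect no real obstacle here. One subtlety worth checking is that the notion of orthogonality matches the one used in \Cref{lem: Poincare}: for $z\in\ker_d V^{k-1}$ one has $dz=0$, so the graph inner product reduces to the $L^2$ inner product on the kernel, and hence graph-orthogonality and $L^2$-orthogonality to $\ker_d V^{k-1}$ coincide, making the two statements compatible.
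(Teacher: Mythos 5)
Your proposal is correct and follows essentially the same route as the paper's proof: exactness yields a potential, the orthogonal splitting off of $\ker_d V^{k-1}$ corrects it, and the Poincar\'e inequality of \Cref{lem: Poincare} delivers the bound. The extra remarks on closedness of the kernel and on the equivalence of graph- and $L^2$-orthogonality on the kernel are sound but not needed beyond what the paper already implicitly assumes.
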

\begin{proof}
    First, by exactness, there exists a $\bar{u}\in V^{k-1}$ for which $d\bar{u} = v.$
    Next, we note that the space $V^{k - 1}$ can be decomposed as
    \begin{align}
        V^{k-1}=\ker_d(V^{k-1})\bigoplus \ker_d(V^{k-1})^{\perp}
    \end{align}
    and thus we may write $\bar{u}=z+u$ with $z\in\ker_d(V^{k-1})$ and $u\perp \ker_d(V^{k-1})$. In turn, it follows that 
    \begin{align}
        du=d(\bar{u}-z)=d\bar{u}=v.
    \end{align}
    Using once more that the complex is exact, we invoke \Cref{lem: Poincare} on $u\perp \ker_d(V^{k-1})$ to obtain the result. 
    (Since $du=v$, it is clear that we also have $\|du\|\lesssim \|v\|$.) 
\end{proof}

We focus on a specific co-chain complex, known as the de Rham complex on $\Omega$. The two- and three-dimensional cases are as follows:
\begin{subequations}\label{eqs:natural_derhamcomplex}
\begin{align}
    0 \hookrightarrow H^1(\Omega) / \RR \overset{\grad}{\to} H^{\crl}(\Omega) \overset{\crl}{\to} H^{\dive}(\Omega) \overset{\dive}{\to} L^2(\Omega) &\to 0,& n=3, \label{eq:3Dsequence} \\
    0 \hookrightarrow H^1(\Omega) / \RR \overset{\crl}{\to} H^{\dive}(\Omega) \overset{\dive}{\to} L^2(\Omega) &\to 0,& n=2. \label{eq:2Dsequence} 
\end{align}
\end{subequations}
These complexes are appropriate for handling natural boundary conditions on $\partial \Omega$. If essential boundary conditions are needed, we use the spaces $H_0^\bullet$ from \eqref{eqs: H0 spaces} and consider the complexes:
\begin{subequations}\label{eqs:BC_derhamcomplex}
\begin{align}
    0 \hookrightarrow H_0^1(\Omega) \overset{\grad}{\to} H_0^{\crl}(\Omega) \overset{\crl}{\to} H_0^{\dive}(\Omega) \overset{\dive}{\to} L^2(\Omega) / \RR &\to 0,& n=3, \\
    0 \hookrightarrow H_0^1(\Omega) \overset{\crl}{\to} H_0^{\dive}(\Omega) \overset{\dive}{\to} L^2(\Omega) / \RR &\to 0,& n=2. 
\end{align}
\end{subequations}

From now on, we define $V^k$ and $d$ such that \eqref{eq:derhamcomplex} corresponds to one of these four complexes, each of which is exact since $\Omega$ is contractible \cite{ArnFEEC18}. More precisely, we define $V^n$ as the rightmost $L^2$ space and $V^{n - 1}$ will be the $H^{\dive}$ to its left. The notation $V^{n-2}$ will only be used for $n=3$. Finally, $V^0$ will refer to the $H^1$ space in the left of the complex. A summary of $V^k$ and all other function spaces introduced in this section is provided in \Cref{tab:spaces_summary} at the end of the section.

We emphasize that the graph norm $\| \cdot \|_{V^k}$ from \eqref{eq: graph norm} coincides with the norms conventionally assigned to the corresponding Sobolev spaces. For example, we have $\| v \|_{V^{n - 1}}^2 = \| v \|^2 + \| \dive v \|^2 =: \| v \|_{H^{\dive}}^2$ and $\| p \|_{V^n} = \| p \|$. Moreover, the projection problems from \Cref{sub: Model problems} can be rewritten as: Given $f$, find $u \in V^k$ such that
\begin{align}
	(u, v) + (du, dv) &= (f, v), &
	\forall v &\in V^k.
\end{align}

\subsection{Regular decomposition}
\label{sub: Regular decomposition}

A key tool in the construction of nodal auxiliary space preconditioners is the existence of a decomposition into functions of higher regularity. For that, we introduce the following subspace $W^k\subseteq V^k$ for each $k$:
\begin{align}\label{eq:Wk}
	W^k := \left[H^1(\Omega)\right]^{\binom{n}{k}} \cap V^k.
\end{align}
We emphasize that $W^n, W^0 \subseteq H^1(\Omega)$ while $W^k \subseteq [H^1(\Omega)]^{n}$ for $0<k<n$, if $n = 2,3$. The intersection with $V^k$ ensures that the functions in $W^k$ satisfy the boundary conditions in the case of complex \eqref{eqs:BC_derhamcomplex}. The regular decomposition of $V^k$ is presented in the following lemma.

\begin{lemma}[{Continuous regular decomposition \cite[Lem. 3.10]{HiptXu07}}]\label{lem:cont_decomp}
	Given $v\in V^k$, then $\psi\in W^k$ and $p\in V^{k-1}$ exist such that
	\begin{align}
		v&=\psi+dp, &
		\|\psi\|_1&\lesssim \|dv\|, & 
		\|p\|_{V^{k - 1}} &\lesssim \|v\|_{V^k}.
	\end{align}
\end{lemma}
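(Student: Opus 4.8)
The plan is to establish the regular decomposition $v = \psi + dp$ by building the high-regularity component $\psi$ first and then recovering $p$ as a stable potential using \Cref{lem:stable_pot}. The guiding idea is that the obstruction to a function $v \in V^k$ itself lying in the smoother space $W^k$ is governed entirely by $dv$: if $dv = 0$ we expect $v$ to be (up to a gradient) regularizable, and in general the ``rough part'' of $v$ should be captured by a potential of $dv$ one step down the complex.

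First I would treat $dv \in V^{k+1}$. Since $d(dv) = 0$ by the complex property, exactness lets me invoke \Cref{lem:stable_pot} to produce a potential $\psi \in V^k$ with $d\psi = dv$ and $\|\psi\|_{V^k} \lesssim \|dv\|$. The essential point, which is where the real work of \cite[Lem.~3.10]{HiptXu07} lies, is that this potential can be chosen to be not merely in $V^k$ but in the regular subspace $W^k = [H^1(\Omega)]^{\binom{n}{k}} \cap V^k$, with the sharper $H^1$ bound $\|\psi\|_1 \lesssim \|dv\|$. Concretely, one constructs a right inverse of $d$ whose image consists of genuinely $H^1$ vector fields --- for the de Rham complex on a contractible Lipschitz domain this is furnished by a Bogovskii-type or Poincaré-type integral operator (the same operators underlying the Poincaré lemma with Sobolev-regularity gain). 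I would cite that such a bounded regular potential operator $\mathcal{R}: \ker_d V^{k+1} \to W^k$ exists with $d\mathcal{R} = \mathrm{id}$ and $\|\mathcal{R} w\|_1 \lesssim \|w\|$, apply it to $w = dv$, and set $\psi := \mathcal{R}(dv)$, yielding $d\psi = dv$ and $\|\psi\|_1 \lesssim \|dv\|$.

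Second, having fixed $\psi$, I would examine the remainder $v - \psi \in V^k$. By construction $d(v - \psi) = dv - d\psi = 0$, so $v - \psi$ lies in $\ker_d V^k$. Exactness of the complex then gives a potential $p \in V^{k-1}$ with $dp = v - \psi$; moreover \Cref{lem:stable_pot} provides the stable choice satisfying $\|p\|_{V^{k-1}} \lesssim \|v - \psi\|$. This immediately delivers the desired identity $v = \psi + dp$. For the final norm estimate I would bound $\|v - \psi\| \le \|v\| + \|\psi\| \lesssim \|v\| + \|dv\| = \|v\|_{V^k}$, using the regular bound on $\psi$ from the previous step, whence $\|p\|_{V^{k-1}} \lesssim \|v\|_{V^k}$, as claimed.

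The main obstacle is the regularity upgrade in the first step: producing a potential of $dv$ that is not just in $V^k$ but in $[H^1]^{\binom{n}{k}}$, and doing so with a bound controlled by $\|dv\|$ rather than $\|dv\|_{V^{k+1}}$. The plain exactness argument of \Cref{lem:stable_pot} only yields a $V^k$-potential with the graph-norm bound, so the crux is the existence of a \emph{regularizing} right inverse of $d$. On a contractible Lipschitz domain this is classical (it is exactly the content of \cite[Lem.~3.10]{HiptXu07}), and I would rely on that result rather than reprove the operator bounds; the remaining steps are then a direct and routine assembly via \Cref{lem:stable_pot}.
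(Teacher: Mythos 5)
The paper offers no proof of this lemma at all---it is quoted verbatim from \cite[Lem.~3.10]{HiptXu07}---so there is nothing internal to compare against; your sketch is a faithful reconstruction of the standard argument in that reference: obtain a regular potential $\psi\in W^k$ of $dv$ via a regularizing right inverse of $d$, observe that $v-\psi$ is $d$-closed, and close with the stable-potential lemma, and your final estimate $\|p\|_{V^{k-1}}\lesssim\|v\|+\|\psi\|\lesssim\|v\|+\|dv\|$ is correct. You rightly identify that all the analytic substance sits in the existence of the bounded map $\mathcal{R}:\ker_d V^{k+1}\to W^k$ with $d\mathcal{R}=\mathrm{id}$ and $\|\mathcal{R}w\|_1\lesssim\|w\|$, and deferring that to the literature is exactly what the paper itself does by citing Hiptmair--Xu. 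The one point worth flagging is that the lemma is stated for all four complexes in \eqref{eqs:natural_derhamcomplex}--\eqref{eqs:BC_derhamcomplex}, so for the $H_0^\bullet$ versions the regularized Poincar\'e/Bogovskii-type operator must also preserve the essential boundary conditions (i.e.\ map into $W^k=[H^1]^{\binom{n}{k}}\cap V^k$ with $V^k$ the boundary-condition space); this is covered by the cited result but should be stated explicitly when you invoke $\mathcal{R}$.
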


\subsection{Virtual element spaces of lowest order}\label{sec:vemintro}
We will now introduce the necessary discrete concepts. Let $\mesh$ be a polytopal tesselation of $\Omega$.
For the geometry we use the following nomenclature. An element $K$ is $n$-dimensional, a facet is $(n-1)$-dimensional, an edge is of dimension $1$, and a node is $0$-dimensional. For $K$ an element of $\mesh$, let $\mcF(K)$ be the set of facets that compose $\partial K$, $\mcE(K)$ the set of the corresponding edges, and $\mcN(K)$ the set of nodes neighboring $K$.
The sets of all facets, edges, and nodes of $\mesh$ are then defined as: 
\begin{align}
    \mcF &:=\bigcup_{K\in\mesh}\mcF(K), &
    \mcE &:=\bigcup_{K\in\mesh}\mcE(K),  &
    \mcN &:=\bigcup_{K\in\mesh}\mcN(K). 
\end{align}
To each facet $F\in\mcF$ we associate a unique normal vector $\nv$ and to each edge $E \in \mcE$ a unit tangential vector $\tv$.

\begin{assumption}
\label{ass:mesh}
	The polytopal mesh $\mesh$ satisfies the following two conditions:
	\begin{enumerate}
		\item There exists a constant $h > 0$ such that each mesh entity $\sigma$ of dimension $n_\sigma \ge 1$ has diameter $\operatorname{diam}(\sigma) \eqsim h$ and measure $|\sigma| \eqsim h^{n_\sigma}$.
		\item Every element $K\in\mesh$ and every facet $F\in\mcF(K)$ is a strictly convex polytope. \label{enum:ass_4}
	\end{enumerate}
\end{assumption}

The relation $a\lesssim b$ implies that a constant $C > 0$ exists, independent of the mesh size $h$, such that $C a\leq b$. The converse $a\gtrsim b$ is defined similarly and we denote $a \eqsim b$ if $a \lesssim b \lesssim a$.

Let us continue by considering the local virtual element spaces $V_h^k(K)$ with $0 \le k \le n$ and $K \in \mesh$ an element. We start by defining $V_h^n(K)$ as the space of constants, with $\dof^n_K$ the associated degree of freedom:
\begin{align} \label{eq: def V_h^n}
	V_h^n(K) &:= \mathbb{P}_{0}(K), &
	\dof^n_K(v) &:= \int_K v.
\end{align}

The next local VE spaces on $K$ (cf. \cite{DaVeiga2018Lowest,DaVeiga2022LowestInterpop,DaVeiga2022Maxwell}) are of finite dimension, but contain functions that are not necessarily polynomials. Let the facet VE space $V_h^{n - 1}(K)$ that generalizes the Raviart-Thomas element \cite{RaviartThomas77} be given by
\begin{subequations} \label{eqs: def facet space}
\begin{align}
	V_h^{n-1}(K) := \ &\big\{v\in [L^2(K)]^2: \dive v \in \mathbb{P}_{0}(K),\ \crl v \in \mathbb{P}_{0}(K), \nonumber\\ 
	&(v\cdot \nv)|_F \in \mathbb{P}_0(F) \ \forall F\in\mcF(K),\ \int_K v \cdot \xv_K^{\perp}=0 \big\}, & 
	n&=2,\\
	V_h^{n-1}(K) := \ &\big\{v\in [L^2(K)]^3: \dive v \in \mathbb{P}_{0}(K),\ \crl v \in [\mathbb{P}_{0}(K)]^3, \nonumber\\ 
	&(v\cdot \nv)|_F \in \mathbb{P}_0(F) \ \forall F\in\mcF(K),\ \int_K  v \cdot (\xv_K\times \yv)=0 \ \forall \yv\in[\mathbb{P}_0(K)]^3 \big\}, & 
    n&=3,
\end{align} 
in which $\xv_K:=\xv-\vec{c}_K$ with $\vec{c}_K$ the centroid of $K$. This space has one degree of freedom on each facet, given by
\begin{align}\label{eq: def dof facet}
    \dof_F^{n-1}(v) &:= \int_F v\cdot \nv, &
    \forall F &\in \mcF.
\end{align}
\end{subequations}

Next, we introduce the edge VE space for $n = 3$, which is similar to the Nédélec edge element of the first kind \cite{Nedelec80}:
\begin{subequations} \label{eqs: def edge space}
\begin{align}
	V_h^{n-2}(K) := \ &\big\{v\in [L^2(K)]^3: \dive v =0,\ \crl\crl v \in [\mathbb{P}_{0}(K)]^3, \nonumber\\ 
	&(\nv\times v)|_F \in V_h^{n-2}(F) \ \forall F\in\mcF(K),\nonumber\\
	&(v|_F \cdot \tv)|_E = ( v|_{F'}\cdot \tv)|_E \ \forall E \in \mcE(F) \cap \mcE(F') \text{ with } F,F' \in \mcF(K), \nonumber\\
	& \int_K  \crl v \cdot (\xv_K\times \yv)=0 \ \forall \yv\in[\mathbb{P}_0(K)]^3 \big\}
\end{align}
In this case, the degrees of freedom are defined on the edges of the mesh:
\begin{align}\label{eq: def dof edge}
    \dof_E^{n-2}(v) &:= \int_E v\cdot \tv, &
    \forall E &\in \mcE(K).
\end{align}
\end{subequations}

\ekn{Lastly, the VE nodal space, which generalizes the Lagrange finite element, is given by}
\begin{subequations} \label{eqs: def nodal space}
	\begin{align} 
		V_h^{0}(K) &:= \big\{v\in C^0(K): \Delta v \in \mathbb{P}_{0}(K),\ v|_F \in \mathbb{P}_1(F) \ \forall F\in\mcF(K),\ \int_K \grad v \cdot \xv_K=0 \big\},& 
        n&=2,\\
		V_h^{0}(K) &:= \left\{v\in C^0(K): \Delta v = 0,\ v|_F \in V_h^0(F) \ \forall F\in\mcF(K) \right\},
        & n&=3,
	\end{align} 
	endowed with the degrees of freedom:
	\begin{align}
		\dof_{\xv}^0(v) &:= v(\xv), &
		\forall \xv &\in\mcN(K).
	\end{align}
\end{subequations}

Let us remark on a convenient property that all $V_h^k(K)$ share, namely that they are each locally $H^1$-regular.
\begin{lemma}[Local $H^1$-regularity]\label{lem:subspaces_H1}
    For each $K\in\mesh$, 
	it holds that $V_h^k(K)\subset W^k(K)$.
\end{lemma}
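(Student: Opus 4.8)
The plan is to verify the inclusion order by order in $k$. Writing $W^k(K)=[H^1(K)]^{\binom nk}\cap V^k(K)$, it suffices to prove the added smoothness $v\in[H^1(K)]^{\binom nk}$ for every $v\in V_h^k(K)$, since the remaining defining property of $V^k(K)$—square-integrability of the relevant differential of $v$—is explicit in the definitions \eqref{eqs: def facet space}--\eqref{eqs: def edge space} when $k>0$ and is subsumed by the target regularity when $k=0$. Two cases are immediate. For $k=n$ the space $V_h^n(K)=\mathbb P_0(K)$ consists of constants, which lie in $H^1(K)$. For $k=0$ the function $v$ is continuous with $\Delta v$ constant and with continuous, piecewise-polynomial boundary data (itself a lower-dimensional nodal VE function), so $v$ is the weak solution on the convex set $K$ of a Poisson problem with $H^{1/2}(\pa K)$ data and therefore belongs to $H^1(K)$.

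For the facet space $V_h^{n-1}(K)$ I would exploit that $\dive v$ and $\crl v$ are componentwise constant. As $K$ is convex, hence contractible, the Helmholtz decomposition $v=\grad\phi+\crl\chi$ is available, with $\chi$ a scalar stream function when $n=2$ and a divergence-free vector potential when $n=3$. I would first solve $-\Delta\chi=\crl v$ with a homogeneous tangential (Dirichlet) condition; this makes $v-\crl\chi$ curl-free, so that $v-\crl\chi=\grad\phi$ on the simply connected $K$. The potential $\phi$ then satisfies $\Delta\phi=\dive v$ together with the Neumann datum $\pa_{\nv}\phi=v\cdot\nv$, i.e.\ exactly the prescribed piecewise-constant normal trace (the compatibility $\int_{\pa K}v\cdot\nv=\int_K\dive v$ holding automatically). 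Convex-domain elliptic regularity then upgrades both potentials to $H^2(K)$, whence $v=\grad\phi+\crl\chi\in[H^1(K)]^{\binom nk}$.

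For the edge space $V_h^{n-2}(K)$, which occurs only for $n=3$, I would instead combine $\dive v=0$ with the identity $\crl\crl v=-\Delta v$, so that the prescription $\crl\crl v\in[\mathbb P_0(K)]^3$ states that $v$ solves the vector Poisson equation $-\Delta v=\vec c$ for a constant $\vec c$. Since moreover $v\in H^{\dive}(K)\cap H^{\crl}(K)$ and its tangential trace on each facet is a facet VE function, hence piecewise polynomial, the same convex-domain regularity—in the guise of the embedding $H_0^{\crl}(K)\cap H^{\dive}(K)\hookrightarrow[H^1(K)]^3$ applied after splitting off the boundary data—gives $v\in[H^1(K)]^3$.

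The one nontrivial ingredient, shared by all the nonconstant cases, is the $H^2$ elliptic-regularity shift on the polytope $K$, which fails for general nonconvex polytopes; this is precisely where the strict convexity of \Cref{ass:mesh} is used. At every corner the interior angle (and, for $n=3$, every dihedral angle) is strictly below $\pi$, so the smallest corner singular exponent is $\pi/\omega>1$ and every admissible singular function already belongs to $H^2$. In addition, although the normal trace is only piecewise constant and hence jumps across corners, near each corner such a jump is matched by a smooth, indeed linear, harmonic function, so no singularity of exponent $\le 1$ is excited and the $H^2$ bound survives. In three dimensions the analogous statement at the edges and vertices of the convex polyhedron must also be checked; this edge- and vertex-singularity analysis is the technically heaviest step, but it is standard for convex polyhedra and is exactly what the convexity hypothesis supplies.
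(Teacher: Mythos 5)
Your proposal is correct in substance but takes a noticeably heavier route than the paper. The paper's proof has two short steps: it first shows $\Delta v\in[\mathbb P_0(K)]^{\binom nk}$ componentwise in every case — for the vector-valued spaces via the identity $\Delta v=-\crl\crl v+\grad\dive v$ together with the constancy of $\dive v$ and $\crl v$ (resp.\ $\crl\crl v$) built into the definitions \eqref{eqs: def facet space}--\eqref{eqs: def edge space} — and then concludes that $v$ solves a Poisson problem with data determined by the degrees of freedom, hence lies in $H^1$. You instead pass through Helmholtz/potential decompositions and prove $H^2$ regularity of the potentials on the convex polytope, a strictly stronger intermediate statement than the $H^1$ bound on $v$ that the lemma asks for. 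What your route buys is an explicit identification of where \Cref{ass:mesh}(\ref{enum:ass_4}) enters (singular exponents exceeding one, and the matching of the piecewise-constant normal-trace jumps by linear harmonic functions), which the paper's one-line conclusion leaves implicit and effectively delegates to the interpolation literature \cite{DaVeiga2022LowestInterpop}; the cost is that you must carry out the full corner/edge/vertex analysis for Neumann data that is not in $H^{1/2}(\partial K)$, which in three dimensions you only sketch. Both arguments genuinely need convexity for $0<k<n$: on an element with a reentrant corner the facet VE functions are generically only in $H^{1/2+\epsilon}$.

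One step would fail as written: in the 3D facet case, solving $-\Delta\chi=\crl v$ componentwise with a tangential Dirichlet condition gives $\crl(v-\crl\chi)=-\grad\dive\chi$, and $\dive\chi$ is not forced to vanish by those boundary conditions, so $v-\crl\chi$ need not be curl-free. The fix is immediate here because $\crl v$ is a \emph{constant} vector: take the explicit linear potential $\chi=\tfrac12(\crl v)\times\xv_K$, so that $v-\crl\chi$ is curl-free with piecewise \emph{linear} normal trace, and the remainder of your argument goes through unchanged.
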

\begin{proof}
    We first show that $\Delta v \in \mathbb{P}_0(K)$ for all $v \in V_h^k(K)$. We distinguish the four cases:
    \begin{itemize}
    	\item ($k = n$) The definition $v\in V_h^n(K) := \mathbb{P}_0(K)$ directly gives us $\Delta v = 0$.
    	\item ($k = n - 1$) Recall the following calculus identity, which holds for $n = 2, 3$:
	    \begin{align} \label{eq: vec Laplacian}
	        \Delta v &= -\crl(\crl v)+\grad(\dive v).
	    \end{align}
    	From \eqref{eqs: def facet space}, we immediately obtain $\Delta v = 0$.
    	\item ($k = n - 2$, $n = 3$) Using \eqref{eq: vec Laplacian}, the properties \eqref{eqs: def edge space} gives us $\Delta v \in \mathbb{P}_0(K)$.
    	\item ($k = 0$) The definitions \eqref{eqs: def nodal space} imply $\Delta v \in \mathbb{P}_0(K)$ for both $n = 2, 3$.
    \end{itemize}
	We now conclude that $v$ is the solution to a Poisson problem with boundary data given by the degrees of freedom, which provides the result.
\end{proof}

Finally, the global VE spaces are defined on the mesh $\mesh$ as 
\begin{align} \label{eq: global V_h}
    V_h^k&:=\{w_h\in V^k: w_h\lvert_{K}\in V_h^k(K), 
    \ \forall K\in\mesh\}.
\end{align}
Note that, by construction, the discrete spaces are conforming, i.e. $V_h^k \subseteq V^k$ for all $k$.

\subsubsection{Useful inequalities for virtual element spaces}

For ease of reference, we mention several inequalities that were derived for the VE spaces. 

\begin{lemma}[{Nodal inverse inequalities \cite[Thm. 3.6]{Huang2018Errors2D}, \cite{Huang2023Estimates}}]\label{lem:VEM_inverse}
	Let $n=2,3$ and consider $ K\in\mesh$. For each $w\in V^0_h(K)$, it holds that
	\begin{subequations}
	\begin{align}
		|w|_{1,K} &\lesssim h^{-1}\|w\|_K. \label{eq:VEM_inverse3d}
	\end{align}
	\end{subequations}
\end{lemma}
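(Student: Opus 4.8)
The plan is to route the estimate through the degrees of freedom of the nodal space. Since the point evaluations $\dof_{\xv}^0(w) = w(\xv)$, $\xv \in \mcN(K)$, form a unisolvent set, the space $V_h^0(K)$ is finite-dimensional and each $w$ is determined by its nodal vector $\vec d(w) := (w(\xv))_{\xv \in \mcN(K)}$. By \Cref{lem:subspaces_H1} we already know $w \in H^1(K)$, so $|w|_{1,K}$ is finite and only needs to be quantified against $\|w\|_K$. The concrete target is the pair of scale-explicit equivalences, valid under \Cref{ass:mesh},
\begin{align}
	|w|_{1,K}^2 &\lesssim h^{n-2}\,|\vec d(w)|^2, &
	h^{n}\,|\vec d(w)|^2 &\lesssim \|w\|_K^2,
\end{align}
where $|\cdot|$ is the Euclidean norm on $\RR^{|\mcN(K)|}$. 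Chaining them gives $|w|_{1,K}^2 \lesssim h^{n-2}|\vec d(w)|^2 = h^{-2}\,h^{n}|\vec d(w)|^2 \lesssim h^{-2}\|w\|_K^2$, which is exactly \eqref{eq:VEM_inverse3d}.

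To obtain the two equivalences I would first homogenize: under \Cref{ass:mesh} every element has $\operatorname{diam}(K) \eqsim h$, so rescaling by $\hat{\xv} = \xv/h$ maps $K$ to a polytope $\hat K$ of unit diameter and $w$ to $\hat w(\hat \xv) = w(h\hat\xv)$. Point values are scale-invariant, $\|w\|_K^2 = h^{n}\|\hat w\|_{\hat K}^2$, and $|w|_{1,K}^2 = h^{n-2}|\hat w|_{1,\hat K}^2$, so the two displayed estimates reduce to the scale-free statements $|\hat w|_{1,\hat K}^2 \lesssim |\vec d(\hat w)|^2$ and $|\vec d(\hat w)|^2 \lesssim \|\hat w\|_{\hat K}^2$. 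On a single fixed $\hat K$ these are immediate: $\hat w \mapsto \|\hat w\|_{\hat K}$ and $\hat w \mapsto |\vec d(\hat w)|$ are genuine norms on the finite-dimensional space, the seminorm is bounded by either, and all norms on a finite-dimensional space are equivalent. The quantitative ingredient that I would invoke is the standard VEM stabilization estimate: the bilinear form $S_{\hat K}(v,v) = |\vec d(v)|^2$ is spectrally equivalent to $|v|_{1,\hat K}^2$ on the complement of the constants and to $\|v\|_{\hat K}^2$, with constants depending only on the shape-regularity parameters encoded in \Cref{ass:mesh}.

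The genuine obstacle is therefore not any single inequality but the \emph{uniformity} of the constants across the infinite family of admissible polytopes $\hat K$: on each individual element the bound is a soft finite-dimensional triviality, whereas a constant that does not degenerate as the element shape varies requires control of how the nodal functionals scale relative to the $L^2$ and energy norms. This is precisely where the full strength of \Cref{ass:mesh} enters — the comparabilities $\operatorname{diam}(\sigma)\eqsim h$, $|\sigma|\eqsim h^{n_\sigma}$, and the strict convexity of elements and facets prevent the edges, facets, and the harmonic (constant-Laplacian) interior behaviour from collapsing — and it is the reason the estimate is quoted from the dedicated virtual-element analyses of \cite{Huang2018Errors2D,Huang2023Estimates} rather than established by the purely dimensional argument above. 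I would present the $n=3$ harmonic case first, where $\Delta w = 0$ makes $w$ the Dirichlet-energy-minimizing extension of its face traces $\hat w|_F \in V_h^0(F)$ and lets the face-level equivalence feed recursively into the volume estimate, and then remark that the $n=2$ constant-Laplacian case follows identically after absorbing the affine term $\int_K \grad w\cdot \xv_K = 0$.
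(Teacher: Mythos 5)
The paper does not prove this lemma at all: it is imported verbatim from \cite{Huang2018Errors2D} and \cite{Huang2023Estimates}, so there is no in-paper argument to compare against. Judged on its own terms, your outline is the standard route by which those references establish the estimate, and the bookkeeping is correct: the two dof-based bounds $|w|_{1,K}^2 \lesssim h^{n-2}|\vec d(w)|^2$ and $h^{n}|\vec d(w)|^2 \lesssim \|w\|_K^2$ do chain to \eqref{eq:VEM_inverse3d}, the scaling identities $\|w\|_K^2 = h^n\|\hat w\|_{\hat K}^2$ and $|w|_{1,K}^2 = h^{n-2}|\hat w|_{1,\hat K}^2$ are right, the rescaled function stays in the (unit-size) nodal VE space, and you correctly identify that the only substantive issue is uniformity of the constants over the family of admissible polytopes, which is exactly the content of the cited theorems and of \Cref{ass:mesh}.

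One step deserves a sharper formulation. Invoking ``the standard VEM stabilization estimate'' in the form that the dofi-dofi form is spectrally equivalent both to $|\cdot|_{1,\hat K}^2$ (modulo constants) and to $\|\cdot\|_{\hat K}^2$ is uncomfortably close to assuming the conclusion: two-sided equivalence to both norms already implies $|\hat w|_{1,\hat K}\lesssim \|\hat w\|_{\hat K}$, which is the inverse inequality on the reference-scale element. What you actually need, and what the references prove independently of each other, are only the two one-sided bounds displayed above: the upper bound on $|w|_{1,K}$ by the dof norm comes from a trace-and-lifting argument on $\partial K$ (using that $w$ minimizes Dirichlet energy among extensions of its boundary data, plus the piecewise-linear/lower-dimensional VE structure of the traces), while the lower bound of $\|w\|_K$ by the scaled dof norm is an $L^2$-Riesz-basis property of the nodal basis. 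Stating the argument this way removes the circularity and makes clear that neither one-sided bound presupposes the other. Note also that the trace inequalities of the paper cannot be used as ingredients here, since the paper derives them \emph{from} this lemma. With that caveat, your reconstruction is faithful to what the quoted results contain, and appropriately honest about where the genuinely hard, shape-uniform estimates live.
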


We also have the nodal VEM trace inequalities which follow from the inverse inequalities.
\begin{lemma}[Nodal trace inequalities]\label{lem:VEM_trace}
	Let $w\in V^0_h(K)$. It holds that 
	\begin{subequations}
	\begin{align}
		\|w\|_E &\lesssim  h^{-1/2}\|w\|_F, & F\in\mcF, E\in\mcE, \quad n&=3, \label{eq:VEM_trace2d} \\ 
		\|w\|_F &\lesssim  h^{-1/2}\|w\|_K, & K\in\mesh, F\in\mcF, \quad n&=2,3. \label{eq:VEM_trace3d}
	\end{align}
	\end{subequations}
\end{lemma}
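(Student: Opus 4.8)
The plan is to combine a scaled (multiplicative) trace inequality for $H^1$ functions on convex polytopes with the nodal inverse inequality of \Cref{lem:VEM_inverse}. The entry point is \Cref{lem:subspaces_H1}, which guarantees that every $w \in V_h^0(K)$ lies in $H^1(K)$, so that the standard trace machinery is available despite $w$ being only virtual.

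First I would establish, for a convex element $K$ of diameter $\eqsim h$, the scaled trace inequality
\[
\|w\|_F \lesssim h^{-1/2}\|w\|_K + h^{1/2}|w|_{1,K}, \qquad F \in \mcF(K),
\]
valid for all $w \in H^1(K)$. Because the elements are general polytopes, this cannot be obtained by mapping to a fixed reference element; instead I would invoke a multiplicative trace inequality of the form $\|w\|_{\partial K}^2 \lesssim h^{-1}\|w\|_K^2 + \|w\|_K\,|w|_{1,K}$, which holds on convex domains with a constant controlled purely by the ratio of diameter to inradius. Under \Cref{ass:mesh} --- convexity of every element and facet together with $\operatorname{diam}(K)\eqsim h$ and $|K|\eqsim h^{n}$ --- this constant is uniform in $h$, and one application of Young's inequality recovers the additive form above. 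Restricting from $\partial K$ to a single facet $F$ only strengthens the bound, since $\|w\|_F \le \|w\|_{\partial K}$.

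Next I would eliminate the gradient term using \Cref{lem:VEM_inverse}: substituting $|w|_{1,K}\lesssim h^{-1}\|w\|_K$ into the scaled trace inequality yields
\[
\|w\|_F \lesssim h^{-1/2}\|w\|_K + h^{1/2}\cdot h^{-1}\|w\|_K \lesssim h^{-1/2}\|w\|_K,
\]
which is precisely \eqref{eq:VEM_trace3d} and covers both $n=2$ and $n=3$.

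Finally, for the edge inequality \eqref{eq:VEM_trace2d} in the case $n=3$, I would repeat the argument one dimension lower, with the facet $F$ playing the role of $K$. The essential structural fact is that $w|_F \in V_h^0(F)$ by the definition of the nodal space in \eqref{eqs: def nodal space}, so that $w|_F$ is itself a two-dimensional nodal virtual element function on the convex polygon $F$. Consequently \Cref{lem:subspaces_H1} gives $w|_F \in H^1(F)$, the scaled trace inequality applies on $F$ with boundary edge $E$, and the two-dimensional instance of \Cref{lem:VEM_inverse} supplies $|w|_{1,F}\lesssim h^{-1}\|w\|_F$; combining these exactly as before gives $\|w\|_E \lesssim h^{-1/2}\|w\|_F$. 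The main obstacle throughout is the first step, namely securing the scaled trace inequality with an $h$-uniform constant on arbitrary convex polytopes. This is where the convexity stipulated in \Cref{ass:mesh} is indispensable, as it is precisely what rules out the degenerate trace constants that a reference-element argument would otherwise have to absorb.
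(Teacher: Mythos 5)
Your proof is correct and follows essentially the same route as the paper: a scaled trace inequality for $H^1$ functions (the paper cites \cite[(2.4)]{brenner2017some} and \cite[Lemma 2.1]{DaVeiga2022LowestInterpop} for this, while you sketch its derivation from a multiplicative trace inequality on convex domains), followed by the nodal inverse inequality of \Cref{lem:VEM_inverse} to absorb the gradient term, with the edge case handled by repeating the argument on the facet $F$ using $w|_F \in V_h^0(F)$. No gaps; the extra justification of the $h$-uniform trace constant via convexity is a welcome elaboration of what the paper leaves to its references.
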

\begin{proof}
	Note that $w|_F\in V_h^0(F)$. For each case we get then the result in two steps:
	\begin{subequations}
		\begin{align}
			\|w\|_E &\lesssim h^{1/2}|w|_{1,F} + h^{-1/2}\|w\|_F \lesssim h^{-1/2}\|w\|_F, & F\in\mcF, E\in\mcE,\\ 
			\|w\|_F &\lesssim h^{1/2}|w|_{1,K} + h^{-1/2}\|w\|_K \lesssim h^{-1/2}\|w\|_K, & K\in\mesh, F\in\mcF.
		\end{align}
	\end{subequations}
	The first inequality is the continuous scaled trace inequality valid for functions in $H^1(F)$ respectively $H^1(K)$, see \cite[(2.4)]{brenner2017some} and \cite[Lemma 2.1]{DaVeiga2022LowestInterpop}. The second is due to \Cref{lem:VEM_inverse}.
\end{proof}

We recall some more useful results.
\begin{lemma}[{VEM stabilization operators \cite[Prop. 5.2, Thm. A.2, Prop. 5.5]{DaVeiga2022LowestInterpop}}]\label{lem:VEM_stability_ops}
	For $K\in\mesh$ it holds that 
	\begin{align}\label{eq:stability_face}
		\|v_h\|^2_{K} &\eqsim h\sum_{F\in\mcF(K)} (v_h\cdot\nv,v_h\cdot\nv)_F, 
		& v&\in V^{n-1}_h, \\
		\|v_h\|^2_{K} &\eqsim h^2\sum_{E\in\mcE(K)} (v_h\cdot\tv,v_h\cdot\tv)_E, 
		& v&\in V^{n-2}_h, n=3.\label{eq:stability_edge}
	\end{align}
\end{lemma}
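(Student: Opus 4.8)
The two equivalences share a common structure, so I would prove them in parallel, combining the algebraic form of the right-hand sides, unisolvence of the degrees of freedom, and a scaling argument that exploits the mesh regularity in \Cref{ass:mesh}.

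First I would record that each right-hand side defines a norm on the finite-dimensional space $V_h^{k}(K)$. For the facet space the normal trace is constant on every $F\in\mcF(K)$, so $(v\cdot\nv,v\cdot\nv)_F = |F|^{-1}(\dof_F^{n-1}(v))^2$; for the edge space the vanishing of $(v\cdot\tv,v\cdot\tv)_E$ forces $\dof_E^{n-2}(v)=\int_E v\cdot\tv$ to vanish. In either case the right-hand side is zero only when all degrees of freedom vanish, so by unisolvence it is indeed a squared norm. Recording the mesh measures $|F|\eqsim h^{n-1}$ and $|E|\eqsim h$ then shows that the factors $h$ and $h^2$ are precisely those needed to make both sides scale as $h^{n}$ (resp. $h^{3}$) under dilation, which is the consistency check that fixes the correct powers of $h$.

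Second, I would pass to a reference configuration by setting $\hat K := h^{-1}K$ and rescaling $v$ correspondingly. The defining constraints of $V_h^{k}(K)$---constant divergence and curl, piecewise-constant traces, and the orthogonality against rotational fields---are covariant under this dilation, so the rescaled field lies in $V_h^{k}(\hat K)$, where $\hat K$ has diameter $\eqsim 1$. On $\hat K$ both quantities in the equivalence are norms on the same finite-dimensional space, and since all norms on a finite-dimensional space are equivalent, the equivalence holds on $\hat K$ with constants depending only on the shape of $\hat K$. Scaling back and inserting the powers of $h$ from the first step then yields the claim on $K$.

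The main obstacle is the uniformity of these constants across the mesh. In contrast to the finite element setting, there is no single reference polytope onto which every $K$ can be mapped, so one cannot simply invoke norm equivalence on one fixed space. Instead I would argue that \Cref{ass:mesh} confines the rescaled elements $\hat K$ to a shape-regular family---with a bounded number of facets, facet and element measures bounded below, and strict convexity---on which the equivalence constant, being a continuous function of the geometry, remains uniformly bounded by a compactness argument. I would also stress that the lower bounds $h\sum_{F}(v\cdot\nv,v\cdot\nv)_F\lesssim\|v\|_K^2$ and its edge analogue must be obtained within this scaling framework rather than from a trace inequality followed by an inverse estimate, since the facet and edge inverse inequalities are themselves downstream consequences of this very lemma and cannot be used here.
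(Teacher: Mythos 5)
First, note that the paper does not prove this lemma at all: it is imported verbatim from \cite[Prop.~5.2, Thm.~A.2, Prop.~5.5]{DaVeiga2022LowestInterpop}, so any self-contained argument you give is necessarily ``a different route.'' The sound parts of your proposal are the unisolvence observation (both right-hand sides are squared norms on the finite-dimensional local spaces, since $(v\cdot\nv)|_F$ and $\int_E v\cdot\tv$ encode exactly the degrees of freedom), the dimensional analysis fixing the powers of $h$, and the covariance of the defining constraints under dilation.

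The genuine gap is the final step. After rescaling, the two quantities are indeed equivalent norms on each fixed $V_h^{k}(\hat K)$, but the equivalence constant depends on $\hat K$, and your only argument for uniformity is the assertion that this constant is ``a continuous function of the geometry'' on a compact family of shapes. For virtual elements that assertion is precisely the hard part, not a routine remark: unlike the finite element case, the local space itself changes with the domain --- its elements are defined implicitly as solutions of boundary value problems on $K$ --- so continuity of the extremal ratio $\sup_{v\in V_h^k(\hat K)}\|v\|_{\hat K}^2/\|v\|_{\mathrm{rhs}}^2$ in, say, the Hausdorff metric requires proving that these implicitly defined spaces (and the associated solution operators) vary continuously with the polytope, and that the admissible family under \Cref{ass:mesh} is actually precompact without degenerating (facets collapsing, dimension of the local space jumping). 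None of this is supplied, and it is exactly to avoid this issue that the cited reference argues directly: the upper bound $\|v_h\|_K^2\lesssim h\sum_F\|v_h\cdot\nv\|_F^2$ follows from the stability of the local Neumann/div--curl problem defining $V_h^{k}(K)$ in terms of its boundary data (measured in $H^{-1/2}(\partial K)$ and then bounded by the scaled $L^2(\partial K)$ norm), while the lower bound uses a scaled trace inequality together with the $H^1$ inverse estimate for the local VE space from \cite[Lem.~3]{DaVeiga2022Maxwell}. On that last point, your worry about circularity is misplaced: the inverse inequalities that are downstream of this lemma in the present paper (\Cref{cor: inverse inequalities}) control $\|dv_h\|$, whereas the trace-inequality route needs the full $H^1$ seminorm bound $|v_h|_{1,K}\lesssim h^{-1}\|v_h\|_K$, which is an independent external result and may legitimately be used. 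As it stands, your proof establishes the equivalence element by element with constants of unknown uniformity, which is not the statement of the lemma.
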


\subsubsection{The auxiliary nodal space} \label{sub: The auxiliary nodal space}

Recall the subspace $W^k$ containing $H^1$-regular functions from \eqref{eq:Wk}. To facilitate a discrete version of the regular decomposition from \Cref{lem:cont_decomp}, we introduce a nodal space that captures $H^1$-regular (vector) functions: 
\begin{align}
    W_h^k&:= \left\{w_h\in W^k: w_h\lvert_{K}\in [V^0_h(K)]^{\binom{n}{k}}, 
    \ \forall K\in\mesh \right\}.
\end{align} 
Note that this space contains vector-valued functions for $0<k<n$, and otherwise scalar-valued functions.
We emphasize that these spaces have global $H^1$-regularity; $W_h^k\subset W^k$ for all $k$. Moreover, the inverse (\Cref{lem:VEM_inverse}) and trace (\Cref{lem:VEM_trace}) estimates hold for $0<k<n$ by applying them component-wise.

\subsubsection{A space of regular potentials} \label{sub: A space of regular potentials}
We conclude this section with another convenient subspace of $W^k$, consisting of $H^1$-regular potentials of discrete functions:
\begin{align}
    X^k := \left\{ w\in W^k : dw\in V^{k+1}_h \right\}.
\end{align}
\begin{remark}
	We note that $X^k$ is nontrivial and we will construct specific elements of $X^k$ in \Cref{thm:discrete exact complex,thm:discrete_decomp}.
\end{remark}

For ease of reference, we summarize the function spaces introduced in this section in \Cref{tab:spaces_summary}.

\begin{table}[htb]
	\caption{Instances of the spaces $V_h^k$, $W_h^k$, and $X^k$, for the complex with natural boundary conditions \eqref{eqs:natural_derhamcomplex} and $n=3$.}
	\label{tab:spaces_summary}
	\begingroup
	\renewcommand{\arraystretch}{1.1}
	\begin{tabular}{c|cccc}
		$k$ & $V^k$& $V_{h\mathstrut}^k$ & $W_h^k$ & $X^k$ \\
		\hline
		$0$ & $H^1 / \RR$ & Nodal space  & Nodal space & Potentials $v\in H^1$ with $\grad v \in V^{n-2}_h$ \\
		$n-2$ & $H^{\crl}$ &Edge space  & Vector nodal space  & Potentials $v\in [H^1]^3$ with $\crl v \in V^{n-1}_h$ \\
		$n-1$ & $H^{\dive}$ &Face space  & Vector nodal space  & Potentials $v\in [H^1]^3$ with $\dive v \in V^n_h$ \\
		$n$ & $L^2$ &Piecewise constants & Nodal space & Potentials $v\in H^1$ \\
	\end{tabular}
	\endgroup
\end{table}

\section{Regular decomposition for virtual elements}\label{sec: VEM Decomposition}


In this section we develop the notions of co-chain complex and regular decomposition in the discrete setting, which are essential for our construction of nodal auxiliary space preconditioners. 
We show that the virtual element method forms a discrete co-chain complex, and that the canonical interpolation operators are stable co-chain projections. With the help of a Clément interpolant, we then derive the stable discrete regular decomposition.

\subsection{A discrete co-chain complex}\label{sec:cochain}

We continue by placing the discrete virtual element spaces from \Cref{sec:vemintro}, illustrated in \Cref{fig:DeRhamDodecahedron}, in the context of co-chain complexes from \Cref{sec:cochaincomplexes}. Several results presented in this section have been shown for three-dimensional virtual element spaces in \cite{DaVeiga2018Lowest,DaVeiga2022Maxwell}. While the two-dimensional analogues can be derived as boundary cases, we include the proofs here for completeness.

\begin{figure}[htb]
    \begin{centering}
    \includegraphics[scale=0.23]{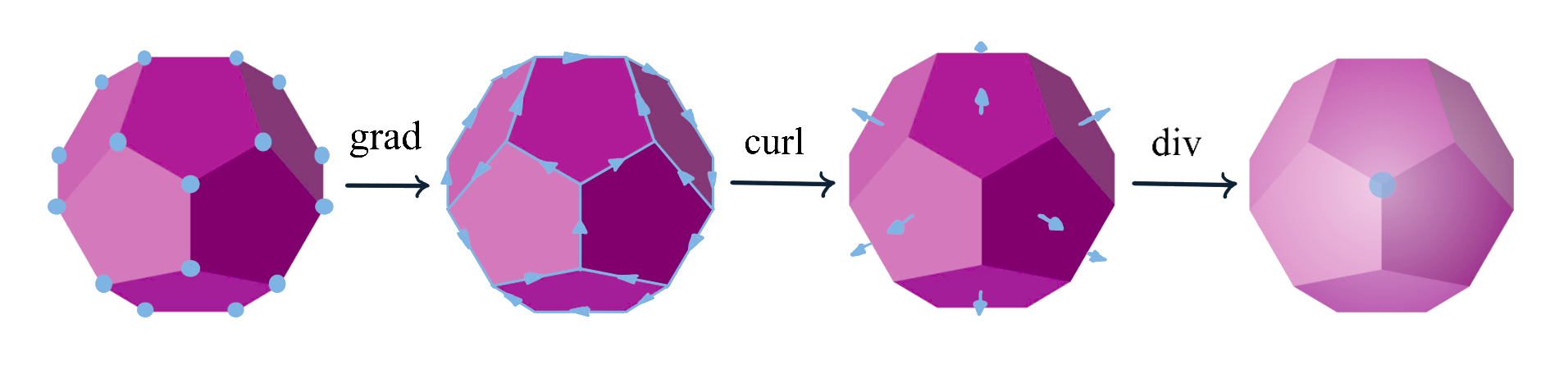}
    \caption{A discrete co-chain complex formed by the nodal, edge, facet, and cell virtual element spaces $V_h^k$ on a regular dodecahedron.}
    \label{fig:DeRhamDodecahedron}
    \end{centering}
\end{figure}

\begin{lemma}\label{lem:discrete complex}
    For $V_h^k$ defined in \Cref{sec:vemintro}, let $d$ be the differential corresponding to Sobolev space $V^k$ from \Cref{sec:cochaincomplexes}. Then $(V^\bullet_h, d)$ is a co-chain complex:
    \begin{align}
        dV_h^k \subseteq \ker_d(V^{k+1}_h).
    \end{align}
\end{lemma}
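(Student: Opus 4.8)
The plan is to separate the inclusion $dV_h^k \subseteq \ker_d(V_h^{k+1})$ into its two constituent facts: that $dv$ lands in the discrete space $V_h^{k+1}$, and that it additionally lies in $\ker_d$. The second fact is immediate. Since each $d$ is one of the continuous operators $\grad$, $\crl$, $\dive$, and these satisfy $dd = 0$ throughout the de Rham complex \eqref{eqs:natural_derhamcomplex}, we get $d(dv) = 0$ for every $v \in V_h^k \subseteq V^k$. Thus the entire content of the lemma is the membership $dv \in V_h^{k+1}$.

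To establish $dv \in V_h^{k+1}$ I would appeal to the definition \eqref{eq: global V_h} and check two requirements: the global conformity $dv \in V^{k+1}$ and the local condition $dv\restreinta{K} \in V_h^{k+1}(K)$ for every $K \in \mesh$. The global conformity is automatic: because $v \in V_h^k \subseteq V^k$ and $d$ is the continuous differential of the de Rham complex, $dv \in V^{k+1}$ holds by the very definition of these Sobolev spaces. Hence the problem reduces to the purely local statement that $dv\restreinta{K}$ satisfies the defining conditions of $V_h^{k+1}(K)$, which I would verify case by case over the pairs $(k,k+1)$ arising in \eqref{eqs:natural_derhamcomplex} for $n=2,3$.

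For the local verification I would run through the defining conditions of $V_h^{k+1}(K)$ in \eqref{eqs: def facet space}, \eqref{eqs: def edge space}\creflastconjunction\eqref{eqs: def nodal space}, confirming each from the conditions satisfied by $v \in V_h^k(K)$ and relying on $\dive\crl = 0$, $\crl\grad = 0$, and the vector Laplacian identity \eqref{eq: vec Laplacian}. The interior polynomial conditions fall out directly: for $k=n-1$ the property $\dive v \in \mathbb{P}_0(K)$ already gives $dv = \dive v \in V_h^n(K)$; for $k=n-2$ with $n=3$ the properties $\dive\crl v = 0$ and $\crl\crl v \in [\mathbb{P}_0(K)]^3$ supply the interior requirements of the facet space; and in the nodal cases the condition $\Delta v \in \mathbb{P}_0(K)$ (respectively $\Delta v = 0$ in 3D) plays the same role. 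Crucially, the moment conditions are matched by design: the constraint $\int_K \crl v \cdot (\xv_K\times \yv)=0$ built into the edge space \eqref{eqs: def edge space} is exactly the facet-space moment condition applied to $\crl v$, and in two dimensions $\int_K \grad v \cdot \xv_K = 0$ in the nodal space yields the facet moment $\int_K \crl v \cdot \xv_K^{\perp}=0$ after the rotation identity $\grad^{\perp} v \cdot \xv_K^{\perp} = \grad v \cdot \xv_K$.

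I expect the genuine obstacle to be the trace conditions rather than the interior or moment conditions. One must show that the normal or tangential trace of $dv$ on each facet $F$ lands in the correct lower-dimensional VE space: that $(\crl v \cdot \nv)\restreinta{F} \in \mathbb{P}_0(F)$, and that tangential traces such as $(\nv\times\grad v)\restreinta{F}$ lie in $V_h^{n-2}(F)$. The key observation is that differentiation commutes with taking the appropriate tangential trace, so that, for instance, $(\nv\times\grad v)\restreinta{F}$ is the surface curl of $v\restreinta{F}\in V_h^0(F)$ and $(\crl v \cdot \nv)\restreinta{F}$ is the surface curl of $(\nv\times v)\restreinta{F}\in V_h^{n-2}(F)$. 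In each case the required membership is precisely the two-dimensional instance of this very lemma applied on $F$; since the two-dimensional complex is the boundary case of the three-dimensional one, the facet-level condition follows from the same argument one dimension down. Finally, the inter-facet tangential continuity demanded by the edge space is inherited from the $C^0$- or tangential continuity already encoded in $v$, completing the local check.
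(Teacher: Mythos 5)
Your proposal is correct and follows essentially the same route as the paper: reduce the claim via $dd=0$ to the membership $dV_h^k \subseteq V_h^{k+1}$, then verify the defining interior, trace, and moment conditions of $V_h^{k+1}(K)$ element by element (e.g.\ the rotation identity for the $2$D moment condition and $v|_F \in \mathbb{P}_1(F)$ for the normal trace). The only difference is one of scope: the paper carries out the explicit local checks only for $n=2$ and cites \cite{DaVeiga2018Lowest,DaVeiga2022Maxwell} for $n=3$, whereas you also sketch the three-dimensional cases, including the facet-trace conditions via the two-dimensional instance of the lemma on each facet, which is consistent with how those references argue.
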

\begin{proof}
    The result was shown for $n = 3$ in \cite{DaVeiga2018Lowest,DaVeiga2022Maxwell}, so we continue with $n = 2$.
	Since $dd = 0$, it suffices to show that $dV_h^k \subseteq V^{k+1}_h$. 
    Let us consider the two non-trivial cases for an element $K\in\mesh$:
    \begin{itemize}
        \item ($k=0$)
        Let $v\in V^0_h(K)$ and $w := \crl v$. We confirm that $w \in V_h^{n-1}(K)$ according to definition \eqref{eqs: def edge space}. First, we immediately have $\dive w=0$. Second, $\crl w = \Delta v \in \mathbb{P}_0(K)$ by \eqref{eqs: def nodal space}.
        Third, on a facet $F$, we have $w\cdot\nv = \nabla v \cdot \tv \in \mathbb{P}_0(F)$ since $v|_F\in\mathbb{P}_1(F)$ by definition of $V^0_h(K)$. 
		Lastly 
        \begin{align}
            \int_K w \cdot \xv_K^{\perp} =\int_K \grad v \cdot \xv_K = 0.
        \end{align}
        \item ($k=1$)
        Let $v\in V^{n-1}_h(K)$, then $\dive v \in \mathbb{P}_0(K) = V_h^n(K)$ by \eqref{eqs: def facet space}.
    \end{itemize}
	Hence $d V_h^k(K) \subseteq V_h^{k + 1}(K)$ locally on each $K \in \mesh$. Moreover, $dd = 0$ implies that $dV_h^k \subseteq V^{k + 1}$ globally and the result follows by the definition \eqref{eq: global V_h}.
\end{proof}

\subsection{Stable co-chain projections} \label{sec:commuting_ops}

Each $V_h^k$ is endowed with a canonical interpolation operator $\Pi^k_h:[C^{\infty}(\Omega)]^{\binom{n}{k}} \to V_h^k$, which is defined as follows using the degrees of freedom $\text{dof}_j^k$ and basis functions $b_j$:
\begin{align}
	[C^{\infty}(\Omega)]^{\binom{n}{k}}\ni v \mapsto \sum_{j=1}^{\dim V_h^k} \text{dof}^k_j(v) b_j\in  V_h^k. \label{eq:interpop}
\end{align}
Note that each $\Pi^k_h$ corresponds to an $L^2$-projection onto a $k$-dimensional mesh entity.
We refer to an operator $\Pi_h^k$ as a \emph{co-chain projection} if it commutes with the differential operator \cite{ArnFEEC18}. This commuting property is visualized by the following diagram:
%
\begin{subequations}\label{diagram:commuting3D}
\begin{align}
\begin{tikzcd}[
	ampersand replacement=\&,
  ]
0 \arrow[r] \arrow[r, hook] \& C^{\infty}(\Omega) / \RR \arrow[r, "\grad"] \arrow[d, "\Pi^{0}_h"] \& C^{\infty}(\Omega)^3 \arrow[r, "\crl"] \arrow[d, "\Pi^{n-2}_h"] \& C^{\infty}(\Omega)^3 \arrow[r, "\text{div}"] \arrow[d, "\Pi^{n-1}_h"] \& C^{\infty}(\Omega) \arrow[r] \arrow[d, "\Pi^n_h"] \& 0 \\
0 \arrow[r, hook]           \& V_h^{0} \arrow[r, "\grad"]                      \& V_h^{n-2} \arrow[r, "\crl"]                      \& V_h^{n-1} \arrow[r, "\text{div}"]                      \& V_h^0 \arrow[r]                      \& 0
\end{tikzcd} 
&& n &= 3
\\
\begin{tikzcd}[
	ampersand replacement=\&,
  ]
    0 \arrow[r] \arrow[r, hook] \& C^{\infty}(\Omega) / \RR \arrow[r, "\crl"] \arrow[d, "\Pi^{0}_h"] \& C^{\infty}(\Omega)^2 \arrow[r, "\dive"] \arrow[d, "\Pi^{n-1}_h"] \& C^{\infty}(\Omega) \arrow[r] \arrow[d, "\Pi^{n}_h"] \& 0 \\
    0 \arrow[r, hook]           \& V_h^{0} \arrow[r, "\crl"]    \& V_h^{n-1} \arrow[r, "\dive"]    \& V_h^n \arrow[r]    \& 0
\end{tikzcd}
&& n &= 2
\end{align}
\end{subequations}


However, the space $C^\infty$ is too restrictive for our purposes. We therefore extend the domain of $\Pi_h^k$ to the Hilbert spaces introduced in \Cref{sub: The auxiliary nodal space,sub: A space of regular potentials}.

\begin{lemma} \label{lem:domain Pi}
    The operator $\Pi_h^k$ is well-defined on $X^k \cup W_h^k$.
\end{lemma}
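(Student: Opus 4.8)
The plan is to show that the canonical interpolation operator $\Pi_h^k$, defined through the degrees of freedom in \eqref{eq:interpop}, is well-defined on each function in $X^k \cup W_h^k$. Since $\Pi_h^k$ is a finite linear combination of the degrees of freedom $\dof_j^k$, being "well-defined" amounts to verifying that each relevant degree of freedom—an integral of a trace over a mesh entity of dimension $k$—makes sense for functions in these spaces. The obstacle is that the degrees of freedom involve traces onto low-dimensional entities (facets, edges, and nodes), which are not continuous on the full Sobolev space $V^k$; one must exploit the extra regularity encoded in $W_h^k$ and $X^k$.

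First I would recall, from \Cref{sub: The auxiliary nodal space}, that $W_h^k \subset W^k = [H^1(\Omega)]^{\binom{n}{k}} \cap V^k$, so functions in $W_h^k$ have full $H^1$-regularity componentwise. For such functions the relevant degrees of freedom are well-defined by the standard trace theorem: an $H^1(\Omega)$ function has a well-defined $L^2$ trace on each facet, and on each edge and node the trace is controlled via the scaled trace inequalities (\Cref{lem:VEM_trace}) applied to the piecewise-polynomial structure, or more directly via Sobolev embedding on the $H^1$-regular pieces. Thus the facet integrals $\dof_F^{n-1}$, edge integrals $\dof_E^{n-2}$, and nodal evaluations $\dof_{\xv}^0$ are all controlled for $w \in W_h^k$.

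Next I would treat $X^k = \{ w \in W^k : dw \in V_h^{k+1} \}$. Since $X^k \subseteq W^k$ as well, the same $H^1$-regularity argument applies verbatim: the degrees of freedom defining $\Pi_h^k$ are evaluated using traces of an $H^1$ (vector) field, which exist and are finite. The extra condition $dw \in V_h^{k+1}$ is not needed for well-definedness of $\Pi_h^k$ itself—it will matter later for the commuting diagram—but it is automatic that $X^k \subseteq W^k$ guarantees the traces exist. I would note that the only subtle case is $k=0$, where the degree of freedom is pointwise nodal evaluation: here one uses that $v \in H^1(\Omega)$ with $\grad v \in V_h^{n-2}$ (or, for $W_h^0$, that $v$ is piecewise in the nodal VE space $V_h^0(K)$, hence continuous by \eqref{eqs: def nodal space}), so point values are well-defined.

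The main obstacle I expect is precisely the borderline regularity of the nodal evaluation degree of freedom for $k=0$, since $H^1(\Omega)$ functions in dimension $n \in \{2,3\}$ do not have well-defined point values in general. I would resolve this by observing that both $W_h^0$ and the $X^0$ potentials carry enough additional structure—continuity inherited from the virtual element definition \eqref{eqs: def nodal space} in the $W_h^0$ case, and the constraint $\grad v \in V_h^{n-2}$ forcing piecewise regularity in the $X^0$ case—to guarantee continuous representatives, so that nodal evaluation is meaningful. For all $k > 0$ the degrees of freedom are integrals of traces over positive-dimensional entities, which are bounded by $H^1$-traces, and no delicate pointwise argument is needed.
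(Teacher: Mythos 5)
There is a genuine gap in your treatment of $X^k$. You assert that for all $k>0$ the degrees of freedom ``are integrals of traces over positive-dimensional entities, which are bounded by $H^1$-traces,'' and consequently that the condition $dw\in V_h^{k+1}$ ``is not needed for well-definedness of $\Pi_h^k$ itself.'' This fails for the edge space, $k=n-2$ with $n=3$: the degree of freedom \eqref{eq: def dof edge} is $\int_E v\cdot\tv$ over a \emph{one-dimensional} edge $E$ of a three-dimensional mesh, and a field in $[H^1(\Omega)]^3$ has no well-defined trace on such a set (the trace onto a facet lands only in $H^{1/2}(F)$, which does not restrict further to $L^2(E)$; tracing onto a line in $\RR^3$ requires regularity strictly above $H^1$). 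The defining property of $X^{n-2}$, namely $\crl v\in V_h^{n-1}$, is exactly what rescues this case: by \Cref{lem:subspaces_H1} it gives $\crl v\in[H^1(K)]^3$ elementwise, and the paper then invokes \cite[Prop.~4.5]{DaVeiga2022LowestInterpop} to conclude that $\int_E v\cdot\tv$ is meaningful for fields with this extra curl regularity. Your argument as written silently skips this, and it is the hardest case of the lemma.

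A second, smaller point: your blanket claim that $dw\in V_h^{k+1}$ is never needed contradicts your own (correct) resolution of the $k=0$ case, where you use $\grad v\in V_h^{n-2}$ (resp.\ $\crl v\in V_h^{n-1}$ for $n=2$) to upgrade $v$ to local $H^2$ regularity before evaluating at nodes; the paper argues the same way, and for $n=3$ additionally records the intermediate step that consecutive traces land in $H^1(\partial F)$, which does embed into continuous functions on one-dimensional sets. Your handling of $W_h^k$ (continuity of the piecewise nodal VE functions) and of the cases $k=n-1$ and $k=n$ matches the paper and is fine.
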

\begin{proof}
    First, the space $W_h^k$ consists of nodal VE spaces which are bounded, continuous functions. The evaluation of the degrees of freedom is therefore well-defined for each $k$. 

    For $X^k$, we consider five cases and it is sufficient to consider the local spaces on an element $K\in\mesh$.
	\begin{itemize}
	\item ($k=0$, $n=2$) 
        By \Cref{lem:subspaces_H1}, $\crl v\in V^{n-1}_h$ implies $(\crl v)|_K  \in [H^1(K)]^2$ whereby $v\in H^2(K).$ 
        In turn, $v|_{\partial K} \in H^{3/2}(\partial K)$ which implies that the restriction of $v$ on the skeleton $\bigcup_{F\in\mcF}F$ is equal to a continuous function in the sense of $L^2$. 
        This continuous representative allows us to define a nodal evaluation of $v$.
	\item ($k=0$, $n=3$)
		By \Cref{lem:subspaces_H1}, $(\grad v)|_K \in V^{n-2}_h\subset [H^1(K)]^3$ so that $v|_K \in H^2(K)$. Applying consecutive trace inequalities gives us 
			$(v|_F)|_{\partial F} \in H^{2-1/2-1/2}(\partial F)=H^1(\partial F)$. In turn, the nodal values of $v$ on the mesh-skeleton are well-defined.
	\item ($k=n-2$, $n=3$)
		For $v \in X^{n-2}$, we have $\crl v \in V^{n-1}_h$ and, in turn, \Cref{lem:subspaces_H1} gives us $\crl v\in [H^1(K)]^3$. Consequently $\int_E v\cdot \tv$ is well-defined, c.f. \cite[Prop. 4.5]{DaVeiga2022LowestInterpop}.
	\item ($k=n-1$)
		Since $X^{n-1} \subset W^{n-1}$, each $v \in X^{n-1}$ has a well-defined normal trace $(\nv \cdot v)|_F \in H^{\frac12}(F) \subset L^2(F)$ on the mesh facets $F$.
	\item ($k=n$)
		In this case, $\Pi^n_h$ is the $L^2$-projection onto the elementwise constants, which is well-defined on $L^2(\Omega)\supset X^n$. 
	\end{itemize}
	The evaluation of the degrees of freedom is thus well-defined for both $X^k$ and $W_h^k$, and so is the interpolant.
\end{proof}

With the domain of $\Pi^{\bullet}_h$ extended from $C^\infty$, we now prove its most important property in the following lemma.

\begin{lemma}\label{lem:co-chain projection}
	$\Pi_h^\bullet$ is a co-chain projection, i.e. 
    \begin{align}
        \Pi_h^{k+1} d = d \Pi_h^k.
    \end{align}
\end{lemma}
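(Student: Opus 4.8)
The plan is to exploit the fact that a function in $V_h^{k+1}$ is uniquely determined by its degrees of freedom, so that it suffices to verify that $d \Pi_h^k v$ and $\Pi_h^{k+1} d v$ produce the same value of every degree of freedom $\dof_\tau^{k+1}$, where $\tau$ ranges over the $(k+1)$-dimensional mesh entities. Two preliminary facts make both sides well-posed and reduce the problem: by \Cref{lem:discrete complex} we have $d \Pi_h^k v \in d V_h^k \subseteq V_h^{k+1}$, so the left-hand side is a genuine element of $V_h^{k+1}$; and by the definition \eqref{eq:interpop} of the canonical interpolant, the right-hand side satisfies $\dof_\tau^{k+1}(\Pi_h^{k+1} dv) = \dof_\tau^{k+1}(dv)$ on each $\tau$. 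Thus the identity collapses to showing $\dof_\tau^{k+1}(d \Pi_h^k v) = \dof_\tau^{k+1}(dv)$ for every $\tau$.

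The mechanism linking the two is the generalized Stokes theorem, which expresses the degree of freedom of $d w$ on a $(k+1)$-entity $\tau$ as a signed sum of the degrees of freedom of $w$ over the bounding $k$-entities $\sigma \subset \partial\tau$. Concretely, the divergence theorem gives $\dof_K^n(\dive w) = \int_K \dive w = \sum_{F \in \mcF(K)} s_{K,F}\, \dof_F^{n-1}(w)$; Stokes' theorem gives $\dof_F^{n-1}(\crl w) = \int_{\partial F} w \cdot \tv = \sum_{E \in \mcE(F)} s_{F,E}\, \dof_E^{n-2}(w)$ for $n=3$; and the fundamental theorem of calculus gives $\dof_E^{n-2}(\grad w) = w(\text{end}) - w(\text{start})$, a difference of nodal degrees of freedom. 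The two-dimensional $\crl$ reduces to this last case via the identity $\crl w \cdot \nv = \grad w \cdot \tv$ (up to the orientation sign). The signs $s_{\cdot,\cdot} \in \{\pm 1\}$ are precisely those encoding the boundary operator of the complex and are fixed by the conventions for $\nv$ and $\tv$.

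Applying this identity twice yields the result. On the one hand, invoking it for $w = \Pi_h^k v \in V_h^k$—admissible because the conforming VE functions possess single-valued traces and the local regularity from \Cref{lem:subspaces_H1}—and then using $\dof_\sigma^k(\Pi_h^k v) = \dof_\sigma^k(v)$ by definition of the interpolant, we obtain $\dof_\tau^{k+1}(d \Pi_h^k v) = \sum_\sigma s_{\tau,\sigma}\, \dof_\sigma^k(v)$. On the other hand, the same Stokes identity applied directly to $w = v$ gives $\dof_\tau^{k+1}(dv) = \sum_\sigma s_{\tau,\sigma}\, \dof_\sigma^k(v)$. The two right-hand sides coincide, so $\dof_\tau^{k+1}(d \Pi_h^k v) = \dof_\tau^{k+1}(dv) = \dof_\tau^{k+1}(\Pi_h^{k+1} dv)$ for every $\tau$, and equality of the interpolants follows.

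The step I expect to be the genuine obstacle is the legitimacy of applying Stokes' theorem to $w = v$ on each individual entity $\tau$, since this requires the relevant boundary traces of $v$ to exist in an $L^2$ sense and the integration-by-parts formula to hold—exactly the content secured by the domain analysis in \Cref{lem:domain Pi}. For $v \in X^k$ the matter is immediate, because $dv \in V_h^{k+1}$ forces $\Pi_h^{k+1} dv = dv$; the real care is needed for $v \in W_h^k$, where one must track the traces entity by entity (node, edge, facet, element), confirm that the signed boundary sums assemble correctly, and ensure that the orientation conventions for $\nv$ and $\tv$ are consistent across shared entities.
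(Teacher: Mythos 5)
Your proposal is correct and follows essentially the same route as the paper: both reduce the identity to checking that the degrees of freedom of $d\Pi_h^k v$ and $\Pi_h^{k+1}dv$ coincide, and both establish this via Stokes-type identities (divergence theorem, fundamental theorem of calculus on edges) that express $\dof_\tau^{k+1}(dw)$ as a signed sum of $\dof_\sigma^k(w)$ over the boundary entities, with \Cref{lem:domain Pi} securing the required trace regularity. The only difference is presentational: you write the argument uniformly for all $k$ and $n$, whereas the paper spells out only $n=2$ and cites the literature for $n=3$.
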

\begin{proof}
	Again, the $n=3$ case was shown in \cite{DaVeiga2022Maxwell} and we focus on $n = 2$. Fix $K\in\mesh$ and let $v$ be a sufficiently regular function to be in the domain of $\Pi_h^k$. We divide into cases of $k=0$ and $k=1.$ 
	By \Cref{lem:domain Pi} it is enough to show that the degrees of freedom of $V^{n-1}_h$ respectively $V^{n}_h$ coincide.

	\begin{itemize}
		\item ($k=0$) 
		Note that for $F\in \mcF(K)$
		\begin{align}
			\dof^{n-1}_F(\Pi_h^{n-1} (\crl v)) = \dof^{n-1}_F(\crl v) = \int_F \nv\cdot(\crl v) = \int_F \tv\cdot \grad v &= v(F_1)-v(F_0) ,\\
			\dof^{n-1}_F(\crl(\Pi^0_hv)) = \int_F \nv\cdot(\crl \Pi^0_hv) = \int_F \tv\cdot \grad (\Pi^0_hv) = \Pi_h^0v(F_1)-\Pi_h^0v(F_0) &= v(F_1)-v(F_0).
		\end{align}
		\item ($k=1$)
		We have
		\begin{align}
			\dof^{n}_K(\Pi^{n}_h(\dive v)) &= \dof^{n}_K(\dive v) = \int_K \dive v ,\\
			\dof^{n}_K(\dive(\Pi_h^{n-1} v)) &= \int_K \dive(\Pi_h^{n-1} v) = \int_{\partial K} \Pi_h^{n-1}  v \cdot \nv = \int_{\partial K} v \cdot \nv = \int_K \dive v.
		\end{align}
	\end{itemize}
	
\end{proof}

\subsubsection{Exactness of the discrete complex}

The combination of the co-chain projection $\Pi_h^k$ with the regular decomposition of $V^k$ from \Cref{sub: Regular decomposition} now allow us to prove that $(V_h^\bullet, d)$ is exact. While this result was shown in \cite{DaVeiga2018Lowest} for $n = 3$, we present it here as an alternative proof.

\begin{theorem}\label{thm:discrete exact complex}
    $(V_h^\bullet, d)$ is an \emph{exact} co-chain complex, i.e.
    \begin{align}
        dV_h^k = \ker_d(V^{k+1}_h).
    \end{align}
\end{theorem}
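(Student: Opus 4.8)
The plan is to prove the two inclusions separately. The inclusion $dV_h^k \subseteq \ker_d(V_h^{k+1})$ is already established in \Cref{lem:discrete complex}, so the entire task reduces to proving the reverse inclusion $\ker_d(V_h^{k+1}) \subseteq dV_h^k$. That is, given a discrete function $v_h \in V_h^{k+1}$ with $dv_h = 0$, I must produce a \emph{discrete} potential $u_h \in V_h^k$ satisfying $du_h = v_h$.

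The natural strategy is to combine the exactness of the \emph{continuous} complex with the co-chain projection property. First, since $v_h \in V_h^{k+1} \subseteq V^{k+1}$ and $dv_h = 0$, I would invoke \Cref{lem:stable_pot} (or simply exactness of the continuous de Rham complex) to obtain a continuous potential $u \in V^k$ with $du = v_h$. The key step is then to apply the canonical interpolation operator $\Pi_h^k$ and set $u_h := \Pi_h^k u$. By the co-chain projection property of \Cref{lem:co-chain projection},
\begin{align}
    d u_h = d \Pi_h^k u = \Pi_h^{k+1} d u = \Pi_h^{k+1} v_h = v_h,
\end{align}
where the last equality holds because $\Pi_h^{k+1}$ is a projection and $v_h$ is already in $V_h^{k+1}$. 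This would give the desired discrete potential and close the inclusion.

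The main obstacle is ensuring that $\Pi_h^k u$ is actually well-defined, since \Cref{lem:domain Pi} establishes the domain of $\Pi_h^k$ only on $X^k \cup W_h^k$, and a generic continuous potential $u \in V^k$ need not lie in this set (it may lack the $H^1$-type regularity required to evaluate the degrees of freedom). To resolve this, I would refine the choice of potential: rather than taking an arbitrary $u$, I would use the continuous regular decomposition of \Cref{lem:cont_decomp} applied to $v_h$. Writing $v_h = \psi + dp$ with $\psi \in W^{k+1}$ and $p \in V^k$, and using $dv_h = 0$, one arranges that the relevant potential can be taken in the regular space $X^k$ (indeed $du = v_h \in V_h^{k+1}$ is exactly the defining condition of $X^k$), so that $\Pi_h^k u$ is well-defined by \Cref{lem:domain Pi}. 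Care must be taken to confirm that the potential produced lands in $X^k$ — i.e. that it is $H^1$-regular with discrete differential — which is precisely where the regularity of the decomposition is used.

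I would therefore structure the proof as: (i) recall $dV_h^k \subseteq \ker_d(V_h^{k+1})$ from \Cref{lem:discrete complex}; (ii) take $v_h \in \ker_d(V_h^{k+1})$ and use exactness of the continuous complex together with the regular decomposition to obtain a potential $u \in X^k$ with $du = v_h$; (iii) apply $\Pi_h^k$, whose well-definedness on $X^k$ is guaranteed by \Cref{lem:domain Pi}; and (iv) conclude via the commuting diagram of \Cref{lem:co-chain projection} that $d\Pi_h^k u = \Pi_h^{k+1} v_h = v_h$, exhibiting the discrete potential. The verification that the continuous potential can be chosen in $X^k$ is the crux, and I expect the regular decomposition to do exactly that work.
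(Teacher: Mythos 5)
Your overall strategy is the paper's: reduce to the reverse inclusion, produce a continuous potential by exactness, upgrade it to a regular one via the continuous regular decomposition so that $\Pi_h^k$ applies, and close with the commuting property of \Cref{lem:co-chain projection}. However, the execution of the step you yourself identify as the crux is off. You propose applying \Cref{lem:cont_decomp} to $v_h$ itself, writing $v_h = \psi + dp$ with $\psi \in W^{k+1}$ and $p \in V^k$. Since $dv_h = 0$, the bound $\|\psi\|_1 \lesssim \|dv_h\|$ forces $\psi = 0$, so this only reproduces a potential $p \in V^k$ with $dp = v_h$ --- no better than what exactness already gave you: $p$ carries no $H^1$-regularity, so \Cref{lem:domain Pi} still does not license $\Pi_h^k p$. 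The parenthetical claim that ``$du = v_h \in V_h^{k+1}$ is exactly the defining condition of $X^k$'' is also not right: membership in $X^k$ requires \emph{both} $u \in W^k$ and $du \in V_h^{k+1}$, and the $H^1$-regularity is precisely the half that needs to be established.

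The fix is to apply the regular decomposition one level down, to the continuous potential rather than to $v_h$. This is what the paper does: take $u \in V^k$ with $du = v_h$ by exactness of the continuous complex, then decompose $u = \psi + dp$ with $\psi \in W^k$ and $p \in V^{k-1}$ via \Cref{lem:cont_decomp}. Then $d\psi = d(u - dp) = du = v_h \in V_h^{k+1}$, so $\psi \in X^k$, and $u_h := \Pi_h^k \psi$ is well-defined by \Cref{lem:domain Pi} and satisfies $du_h = \Pi_h^{k+1} d\psi = \Pi_h^{k+1} v_h = v_h$. (Equivalently, you could iterate your own step: having obtained $v_h = dp$ with $p \in V^k$, decompose $p$ once more to extract its regular part.) With that one correction your argument coincides with the paper's proof.
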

\begin{proof}
	The inclusion $dV_h^k \subseteq \ker_d(V^{k+1}_h)$ was shown in \Cref{lem:discrete complex} so we proceed to prove the converse inclusion.
	Let $v_h \in V_h^{k + 1}$ with $dv_h = 0$. Since $V_h^{k + 1} \subset V^k$, the exactness of $(V^\bullet, d)$ implies that a $u \in V^k$ exists with $du = v_h$. The regular decomposition from \Cref{lem:cont_decomp} then guarantees that $\psi \in W^k$ and $p \in V^{k - 1}$ exist such that $u = \psi + dp$. In turn, we derive that $d\psi = d(u - dp) = v_h$ and thus $\psi \in X^k$. Let us now set $u_h := \Pi_h^k \psi\in V^k_h$, which is well-defined by \Cref{lem:domain Pi}. Using the commuting property of \Cref{lem:co-chain projection}, we obtain
	\begin{align}
		d u_h = d \Pi_h^k \psi = \Pi_h^{k + 1} d \psi = \Pi_h^{k + 1} v_h = v_h.
	\end{align}
	Thus, we have shown that $\ker_d(V^{k+1}_h) \subseteq dV_h^k$.
\end{proof}

\begin{corollary}[Stable discrete potential] \label{cor: stable_pot_h}
    Given $v_h\in V^{k}_h$ with $dv_h=0$, there exists a discrete potential $u_h\in V^{k-1}_h$ such that
    \begin{align}
        du_h &= v_h, &
        \|u_h\|_{V^{k - 1}} &\lesssim \|v_h\|.
    \end{align}
\end{corollary}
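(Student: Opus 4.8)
The plan is to derive the corollary as a discrete analogue of the stable potential result \Cref{lem:stable_pot}, but using the machinery built up for the discrete complex rather than re-running the Poincar\'e argument on the continuous spaces. The key observation is that \Cref{thm:discrete exact complex} establishes exactness of the discrete complex $(V_h^\bullet, d)$, and exactness is precisely the hypothesis needed to run the argument of \Cref{lem:stable_pot}. So the cleanest route is to verify that $(V_h^\bullet, d)$ satisfies the Poincar\'e inequality of \Cref{lem: Poincare}, and then apply the potential-construction argument verbatim.

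First I would invoke \Cref{thm:discrete exact complex} to conclude that $(V_h^\bullet, d)$ is an exact co-chain complex. Since each $V_h^k \subseteq V^k \subseteq L^2(\Omega)$ carries the same graph norm $\|\cdot\|_{V^k}$, the abstract \Cref{lem: Poincare} applies directly to the discrete complex, yielding $\|w_h\| \lesssim \|dw_h\|$ for all $w_h \perp \ker_d(V_h^k)$. Then I would repeat the decomposition argument of \Cref{lem:stable_pot}: given $v_h \in V_h^k$ with $dv_h = 0$, exactness of the discrete complex furnishes some $\bar{u}_h \in V_h^{k-1}$ with $d\bar{u}_h = v_h$; orthogonally decompose $\bar{u}_h = z_h + u_h$ with $z_h \in \ker_d(V_h^{k-1})$ and $u_h \perp \ker_d(V_h^{k-1})$ inside $V_h^{k-1}$; observe $du_h = d\bar{u}_h = v_h$; and apply the discrete Poincar\'e inequality to bound $\|u_h\| \lesssim \|du_h\| = \|v_h\|$. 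Combining with $\|du_h\| = \|v_h\|$ gives $\|u_h\|_{V^{k-1}}^2 = \|u_h\|^2 + \|du_h\|^2 \lesssim \|v_h\|^2$, which is the claim.

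The one subtlety to flag is the $h$-independence of the hidden constant. The constant in \Cref{lem: Poincare} comes from the abstract functional-analytic statement applied to the complex $(V_h^\bullet, d)$, and one must check that it does not secretly depend on $h$. The safest way to secure this is to note that the continuous Poincar\'e constant controls the discrete one: the potential $u_h$ constructed above lies in $V_h^{k-1} \subseteq V^{k-1}$ and is $L^2$-orthogonal to $\ker_d(V_h^{k-1})$, but since $dv_h=0$ the continuous stable potential \Cref{lem:stable_pot} already produces \emph{some} $V^{k-1}$-potential with $\|\cdot\|_{V^{k-1}} \lesssim \|v_h\|$ using only the $h$-independent continuous constant; the discrete minimal-norm potential can only be smaller. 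I expect this $h$-uniformity check to be the main obstacle, and the remark above is the way I would resolve it: the bound is inherited from the continuous complex, so no new mesh-dependent constant is introduced.

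Alternatively, one can bypass the abstract Poincar\'e step entirely and argue directly: apply \Cref{lem:stable_pot} to $v_h$ (viewed as an element of $V^k$ with $dv_h = 0$) to obtain $u \in V^{k-1}$ with $du = v_h$ and $\|u\|_{V^{k-1}} \lesssim \|v_h\|$, then use exactness of $(V_h^\bullet,d)$ together with the co-chain projection to replace $u$ by its interpolant $u_h := \Pi_h^{k-1}\psi$, exactly as in the proof of \Cref{thm:discrete exact complex}, and transfer the bound through the stability of $\Pi_h^{k-1}$. This second route makes the $h$-independence transparent provided the interpolant is $h$-uniformly stable on $X^{k-1}$, so the choice between the two approaches hinges on which stability estimate is most readily available at this point in the paper.
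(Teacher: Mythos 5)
Your main route is exactly the paper's proof: the paper's entire argument is ``by \Cref{thm:discrete exact complex} the discrete complex is exact, so \Cref{lem:stable_pot} applies,'' and your first paragraph reproduces that, with the orthogonal decomposition inside $V_h^{k-1}$ spelled out. So the approach matches.

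However, the one place where you go beyond the paper --- the $h$-uniformity of the Poincar\'e constant for the discrete complex --- is where your argument goes wrong. You claim that since the continuous \Cref{lem:stable_pot} produces some potential in $V^{k-1}$ with norm $\lesssim \|v_h\|$, ``the discrete minimal-norm potential can only be smaller.'' This is backwards: the set of discrete potentials $\{\bar u_h \in V_h^{k-1} : d\bar u_h = v_h\}$ is a subset of the set of continuous potentials, so minimizing the $L^2$-norm over it yields a value that is \emph{greater} than or equal to the continuous minimum, not smaller. The existence of a small continuous potential therefore says nothing directly about the size of the best discrete one, and this comparison cannot close the $h$-uniformity gap. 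Your alternative route is the correct repair: take the continuous potential, regularly decompose it as in the proof of \Cref{thm:discrete exact complex} to get $\psi \in X^{k-1}$ with $d\psi = v_h$ and $\|\psi\|_1 \lesssim \|v_h\|$ (by \Cref{lem:cont_decomp}), set $u_h := \Pi_h^{k-1}\psi$, and use \Cref{cor:pot_stability} to get $\|u_h\| \lesssim \|\psi\|_1 \lesssim \|v_h\|$ with constants manifestly independent of $h$. That second route is self-contained and, in fact, more careful than the paper's own one-line proof, which silently applies the abstract \Cref{lem:stable_pot} to an $h$-dependent family of complexes.
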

\begin{proof}
    Due to \Cref{thm:discrete exact complex}, 
	the result follows directly from \Cref{lem:stable_pot}.
\end{proof}

\subsubsection{Approximation properties for regular functions}

In addition to the commuting property, the interpolation operators have optimal approximation properties when applied to sufficiently regular functions. We summarize these as a general result in the following theorem, and outline the details for the edge space in a separate, subsequent lemma. 

\begin{theorem}[Approximation properties]\label{thm:pot_interpol}
    For $v\in X^k \cup W_h^k$, we have 
    \begin{align}
    	\| (I-\Pi^k_h) v\| \lesssim h\|v\|_1.\label{eq:interp_est}
    \end{align}
\end{theorem}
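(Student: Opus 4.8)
The plan is to establish the estimate $\|(I-\Pi_h^k)v\| \lesssim h\|v\|_1$ by reducing to a local, element-wise argument and invoking a Bramble--Hilbert/Poincar\'e-type scaling argument. First I would observe that both the identity and the interpolant act locally, so it suffices to prove $\|(I-\Pi_h^k)v\|_K \lesssim h\|v\|_{1,K}$ for each $K\in\mesh$ and then sum over elements, since $\sum_K \|v\|_{1,K}^2 = \|v\|_1^2$. The key structural fact, already noted in the excerpt, is that each $\Pi_h^k$ corresponds to an $L^2$-projection of the relevant trace (normal component on facets for $k=n-1$, tangential component on edges for $k=n-2$, nodal values for $k=0$, and the elementwise $L^2$-projection for $k=n$) onto the degrees of freedom. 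For $v\in X^k\cup W_h^k$ these are all well-defined by \Cref{lem:domain Pi}, and by \Cref{lem:subspaces_H1} together with the definition of $W^k$ the argument $v$ is genuinely in $[H^1(K)]^{\binom{n}{k}}$, which is what makes the first-order estimate plausible.

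The core of the proof is a polynomial-invariance plus scaling argument. The plan is to exploit that $\Pi_h^k$ reproduces the appropriate low-order polynomials: for $k=n$ the projection reproduces constants exactly, and for the facet, edge, and nodal spaces the canonical interpolant is designed to reproduce constants (respectively the lowest-order Raviart--Thomas/N\'ed\'elec/Lagrange shape functions), so $\Pi_h^k q = q$ for constant vector fields $q$. Writing $v - \Pi_h^k v = (I-\Pi_h^k)(v - q)$ for a well-chosen constant $q$ (the mean of $v$ over $K$), I would bound
\begin{align}
    \|(I-\Pi_h^k)v\|_K &\leq \|v - q\|_K + \|\Pi_h^k(v-q)\|_K.
\end{align}
The first term is controlled by the Poincar\'e inequality $\|v-q\|_K \lesssim h\,|v|_{1,K}$ with the optimal mean choice of $q$. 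For the second term, the plan is to use the stability estimates from \Cref{lem:VEM_stability_ops} together with the trace inequalities from \Cref{lem:VEM_trace}: since $\Pi_h^k(v-q)$ lies in a VE space, its $L^2$-norm is equivalent (up to $h$-powers) to sums of its boundary degrees of freedom, which are exactly $L^2$-projections of the traces of $v-q$; scaled trace inequalities then convert these back into $\|v-q\|_K$ and $h|v|_{1,K}$, yielding the $h\|v\|_{1,K}$ bound after the powers of $h$ balance.

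I expect the main obstacle to be the boundary/trace bookkeeping that controls $\|\Pi_h^k(v-q)\|_K$ uniformly in $h$, because the degrees of freedom live on lower-dimensional entities and the trace inequalities in \Cref{lem:VEM_trace} carry negative powers of $h$ that must cancel precisely against the positive powers coming from the measure scaling $|F|\eqsim h^{n-1}$ and the stability norms in \Cref{lem:VEM_stability_ops}. The edge case $k=n-2$ is the most delicate, since the tangential degree of freedom $\int_E v\cdot\tv$ requires the strongest regularity and an extra trace step from the element to a facet to an edge; this is presumably why the authors announce that they will treat the edge space separately in a subsequent lemma. The remaining care is to ensure the constant $q$ chosen for the Poincar\'e step is simultaneously reproduced by $\Pi_h^k$ and compatible with the mesh regularity in \Cref{ass:mesh}, so that the hidden constants are genuinely independent of $h$.
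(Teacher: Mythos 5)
Your reduction to a local estimate, the use of constant reproduction $\Pi_h^k q = q$, the Poincar\'e bound $\|v-q\|_K\lesssim h|v|_{1,K}$, and the control of $\|\Pi_h^k(v-q)\|_K$ via the DOF-norm equivalences of \Cref{lem:VEM_stability_ops} and trace inequalities is a sound strategy for part of the theorem: it is essentially the paper's own argument for $v\in W_h^{n-2}$ in \Cref{lem:edge_interpol}, and it gives a self-contained proof of the facet case $k=n-1$ (where the paper instead cites an external interpolation estimate). However, there are two genuine gaps where this scaling argument cannot close.

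The first and most serious is the edge case $k=n-2$ with $v\in X^{n-2}$. Your plan requires passing $\|(v-q)\cdot\tv\|_E$ back to $\|v-q\|_K$ and $h|v-q|_{1,K}$ through a two-step trace chain $K\to F\to E$, and the second step needs $|v-q|_{1,F}$, i.e.\ $v|_F\in H^1(F)$. A general $v\in X^{n-2}$ only satisfies $v\in[H^1(K)]^3$ with $\crl v\in V_h^{n-1}$; its trace on a facet is merely in $H^{1/2}(F)$, so $|v|_{1,F}$ (and indeed $\|v\|_E$ itself, for a bare $H^1(K)$ field) is not controlled. This is precisely why the edge DOFs are only well-defined thanks to the extra information on $\crl v$ (\Cref{lem:domain Pi}), and why the paper's proof of this subcase abandons the trace-chain route entirely: it invokes the specialized estimate $\|(I-\Pi_h^{n-2})v\|_K\lesssim h^s|v|_{s,K}+h\|\crl v\|_K+h^{s+1}|\crl v|_{s,K}$ from the literature and then removes the term $h^2|\crl v|_{1,K}$ by an inverse-type bound for $\crl v\in V_h^{n-1}(K)\cap W^{n-1}(K)$ that rests on elliptic regularity on convex polytopes (this is where \Cref{ass:mesh}(\ref{enum:ass_4}) enters). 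You flag this case as ``the most delicate'' but defer it; it is in fact the heart of the theorem and is not resolved by your argument. The second gap is $k=0$ with $v\in X^0$: the nodal DOF is a point evaluation, which is not an $L^2$-projection of a trace and is not controlled by $\|v-q\|_K+h|v|_{1,K}$ for $n\ge 2$ (a scaling argument would drag in $|v|_{2,K}$, which $\|v\|_1$ does not bound). The paper sidesteps this entirely with a structural observation: by the commuting property (\Cref{lem:co-chain projection}) $d(I-\Pi_h^0)v = (I-\Pi_h^1)dv = 0$ since $dv\in V_h^1$, and exactness of the continuous complex then forces $(I-\Pi_h^0)v=0$, so the estimate holds as an identity. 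You would need to import both of these ideas to complete the proof.
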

\begin{proof}
    Let us consider four cases:
    \begin{itemize}
		\item ($k=0$) 
		If $v\in W^{0}_h=V^0_h$ then $v=\Pi^0_h v$ and the result is trivial.
		For $v\in X^0$, we use \Cref{lem:co-chain projection} to observe that $d \Pi^0 v = \Pi^1 d v = dv$. In turn, $(I - \Pi^0)v \in \ker_d(V^0) = \{ 0 \}$ by the exactness of $(V^\bullet, d)$ and we again conclude $v = \Pi^0 v$. 
	    \item ($k=n-2$, $n=3$)
	    The result follows from \Cref{lem:edge_interpol}, proven below, after summing over all elements $K$.
		\item ($k=n-1$)
    	Since $X^{n-1}(K) \cup W_h^{n-1}(K) \subset W^{n-1}(K)$, the result follows from \cite[Prop. 3.2]{DaVeiga2022LowestInterpop} combined with the convexity assumption \eqref{enum:ass_4} from \Cref{ass:mesh}.
		\item ($k=n$)
	    In this case, $\Pi^n_h$ is the $L^2$-projection onto the elementwise constants. The difference $(I-\Pi^n_h)v$ therefore has zero mean on each element $K$. The result follows from the Poincaré inequality after summing over all $K \in \mesh$.
    \end{itemize}
\end{proof}

\begin{lemma}[Edge interpolant]\label{lem:edge_interpol}
	For $n=3$ and $K\in\mesh$, we have
	\begin{align}
		\|(I-\Pi^{n-2}_h) v\|_K &\lesssim 
		h\|v\|_{1,K},& \forall v&\in X^{n-2}\cup W^{n-2}_h  \label{eq:edge_continuity_convex}
	\end{align}
\end{lemma}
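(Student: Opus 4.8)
The goal is the interpolation estimate $\|(I-\Pi^{n-2}_h)v\|_K \lesssim h\|v\|_{1,K}$ for the edge VE space in $n=3$, and the natural strategy is a Bramble–Hilbert / scaling argument anchored to the degrees of freedom, using that $\Pi^{n-2}_h$ reproduces (at least) constant vector fields. Let me sketch the plan.

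The plan is to exploit the fact that, by \Cref{lem:subspaces_H1} and the definition of $X^{n-2}\cup W^{n-2}_h$, every admissible $v$ is $H^1$-regular on $K$, so its edge integrals $\dof^{n-2}_E(v)=\int_E v\cdot\tv$ are well-defined (\Cref{lem:domain Pi}). First I would verify that $\Pi^{n-2}_h$ is a \emph{projection} that reproduces constant vector fields: for a constant $\vek{c}$, one has $\int_E \vek{c}\cdot\tv = \vek{c}\cdot\int_E\tv$, and since $\Pi^{n-2}_h\vek{c}$ is the edge element matching these values, a direct check on the degrees of freedom gives $\Pi^{n-2}_h\vek{c}=\vek{c}$. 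Consequently $(I-\Pi^{n-2}_h)$ annihilates constants, which is the key ingredient for a first-order estimate. Then I would write, for any constant $\vek{c}$,
\begin{align}
	\|(I-\Pi^{n-2}_h)v\|_K = \|(I-\Pi^{n-2}_h)(v-\vek{c})\|_K \le \|v-\vek{c}\|_K + \|\Pi^{n-2}_h(v-\vek{c})\|_K,
\end{align}
reducing the task to bounding $\|\Pi^{n-2}_h(v-\vek{c})\|_K$ and then invoking the Poincaré–Bramble–Hilbert inequality $\inf_{\vek c}\|v-\vek c\|_K \lesssim h\|v\|_{1,K}$ for the first term.

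The core step is the stability bound $\|\Pi^{n-2}_h w\|_K \lesssim \|w\|_{1,K}$ (applied to $w=v-\vek{c}$), and here I would route through the VEM stabilization estimate \eqref{eq:stability_edge}: since $\Pi^{n-2}_hw\in V^{n-2}_h(K)$, we have $\|\Pi^{n-2}_hw\|_K^2 \eqsim h^2\sum_{E\in\mcE(K)}(\Pi^{n-2}_hw\cdot\tv,\Pi^{n-2}_hw\cdot\tv)_E$, and on each edge $\Pi^{n-2}_hw\cdot\tv$ has the same integral as $w\cdot\tv$. Bounding $\big(\int_E w\cdot\tv\big)^2 \lesssim h\,\|w\cdot\tv\|_E^2$ by Cauchy–Schwarz and then chaining the trace inequalities $\|w\|_E \lesssim h^{-1/2}\|w\|_F$ and $\|w\|_F\lesssim h^{-1/2}\|w\|_K$ — now valid for the $H^1$-regular $w$ via the continuous scaled trace inequality (as in \Cref{lem:VEM_trace}) rather than the nodal inverse inequality — would convert the edge data into $\|w\|_{1,K}$ with the correct powers of $h$. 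Combining with the Poincaré term then yields the claimed $h\|v\|_{1,K}$ bound after choosing $\vek c$ optimally.

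The main obstacle I anticipate is the careful $h$-bookkeeping: \eqref{eq:stability_edge} carries a favorable $h^2$ prefactor, but each edge term $\big(\int_E w\cdot\tv\big)^2$ and the two trace inequalities each contribute competing powers of $h$, so one must track them precisely to land on the homogeneous estimate without spurious factors. A secondary subtlety is that the trace estimates of \Cref{lem:VEM_trace} are stated for the \emph{nodal} space $V^0_h$, so for $H^1$-regular $w$ I would instead invoke directly the continuous scaled trace inequalities on facets and edges (the first inequality cited in the proof of \Cref{lem:VEM_trace}), which hold for all of $H^1(K)$ and $H^1(F)$; confirming that these apply on strictly convex polytopal $K$ and $F$ under \Cref{ass:mesh} is the technical point that makes the chain rigorous.
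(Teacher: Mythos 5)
Your overall skeleton (constant reproduction, subtracting a constant, Poincar\'e, and bounding $\|\Pi_h^{n-2}w\|_K$ through the edge degrees of freedom and \eqref{eq:stability_edge}) is essentially the paper's argument for the case $v\in W_h^{n-2}$ --- except that there the chain of trace bounds is closed by the \emph{nodal} inverse inequalities of \Cref{lem:VEM_inverse}, applied componentwise to $w=v-\bar v\in[V_h^0(K)]^3$, which absorb the $|w|_{1,F}$ and $|w|_{1,K}$ terms of the scaled trace inequalities. Your proposed substitute --- using only the continuous scaled trace inequalities --- does not close even this case, because the edge-level inequality $\|w\|_E\lesssim h^{1/2}|w|_{1,F}+h^{-1/2}\|w\|_F$ leaves you with $|w|_{1,F}$, which cannot be bounded by $\|w\|_{1,K}$ for a general $H^1(K)$ field (the gradient has no trace on $F$).

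The more serious gap is the case $v\in X^{n-2}$, which your unified argument cannot reach. A general element of $X^{n-2}$ is only in $[H^1(K)]^3$ (with the extra information $\crl v\in V_h^{n-1}$), and in three dimensions edges have codimension two: the trace of $w\in H^1(K)$ on a facet lies only in $H^{1/2}(F)$, which does not embed into $L^2(E)$, so $\int_E w\cdot\tv$ is not a bounded functional of $\|w\|_{1,K}$ and the claimed stability $\|\Pi_h^{n-2}w\|_K\lesssim\|w\|_{1,K}$ is false on $[H^1(K)]^3$. The well-definedness and control of the edge degrees of freedom for $X^{n-2}$ rest entirely on the additional regularity $\crl v\in[H^1(K)]^3$ (cf.\ \Cref{lem:domain Pi}), and the paper's proof exploits exactly this: it invokes the two-field estimate $\|(I-\Pi_h^{n-2})v\|_K\lesssim h|v|_{1,K}+h\|\crl v\|_K+h^2|\crl v|_{1,K}$ from the interpolation theory of \cite{DaVeiga2022LowestInterpop}, and then eliminates the term $h^2|\crl v|_{1,K}$ by the regularity/inverse estimate $|\crl v|_{1,K}\lesssim h^{-1}\|\crl v\|_K$ valid for $\crl v\in V_h^{n-1}(K)$ on convex elements (\Cref{ass:mesh}\eqref{enum:ass_4}). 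Neither of these two ingredients --- the role of $\crl v$ in the approximation estimate, nor the convexity-dependent bound on $|\crl v|_{1,K}$ --- appears in your plan, so the $X^{n-2}$ half of the lemma remains unproved.
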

\begin{proof}
	Let us divide into two cases depending on if $v\in X^{n-2}(K)$ or $v\in W^{n-2}_h(K)$.
	\begin{itemize}
		\item ($v\in X^{n-2}(K)$)
		By definition of the $X^k$ spaces, we have $v\in W^{n-2}(K)$ and $\crl v \in V^{n-1}_h$. In turn, \Cref{lem:subspaces_H1} gives us $\crl v\in W^{n-1}(K)$.
		From \cite[Prop. 4.5]{DaVeiga2022LowestInterpop} we therefore have for all $1/2<s\le 1$
		\begin{align}
			\|(I-\Pi^{n-2}_h) v\|_K &\lesssim h^s|v|_{s,K}+h\|\crl v\|_K+h^{s+1}|\crl v|_{s,K}.
		\end{align}
		Taking $s=1$ and bounding $\|\crl v\|_K\le \| v \|_{1, K}$, we get 
		\begin{align}
			\|(I-\Pi^{n-2}_h) v\|_K &\lesssim 
			h\|v\|_{1,K}+h^2|\crl v|_{1,K}.
			\label{eq:edge_continuity}
		\end{align}
		Let $w_h:=\crl v\in V^{n-1}_h(K)\cap W^{n-1}(K)$. We can invoke \cite[Lem. 3]{DaVeiga2022Maxwell} on $w_h$
		\begin{align}
			|w_h|_{s',K} \lesssim h^{-s'}\|w_h\|_K,
		\end{align}
		for some $1/2<s'\le 1$. This $s'$ is the minimum value for which both of the following statements hold:
		\begin{itemize}
			\item The solution to the Laplace problem with Neumann boundary data $w_h\cdot \vec{\nu}\in H^{1/2}(\partial K)$ on $K$ is in $H^{s'+1}(K)$.
			\item The spaces $H^{\dive}(K)$ and $H^{\crl}(K)$ are continuously embedded in $H^{s'}(K)$.
		\end{itemize}
		By \eqref{enum:ass_4} of \Cref{ass:mesh}, both statement hold for $s'=1$, see \cite[Thm. 2, Thm. 4, page 16]{DaugePolyhedralRegularity}\cite[Cor. 23.5]{Dauge2006Elliptic} respectively \cite[Prop. 3.7, Cor. 23.5]{Amrouche1998Vector}. Therefore, we can bound the second term:
		\begin{align}
				\|(I-\Pi^{n-2}_h) v\|_K &\lesssim 
				h\|v\|_{1,K}+h^2 |\crl v |_{1, K} 
				\lesssim h\|v\|_{1,K}+h \|\crl v \|_K 
				\lesssim h\|v\|_{1,K}.
		\end{align}
		\item ($v\in W^{n-2}_h(K)$) 
		Let $\bar{v}\in V^{n-2}_h(K)$ be the mean of $v$ over $K$. Then by the Poincaré inequality
		\begin{align} \label{eq: W estimate 1}
			\|(I-\Pi^{n-2}_h) v\|_K &\leq \|v-\bar{v}\|_K + \|\bar{v}-\Pi^{n-2}_h v\|_K \lesssim h|v|_{1,K}+\|\bar{v}-\Pi^{n-2}_h v\|_K.
		\end{align}
		We estimate the second term. 
		The interpolant preserves constants, so $\Pi^{n-2}_h\bar{v}=\bar{v}$. 
		We now use \cite[Cor. 4.3]{DaVeiga2022LowestInterpop}, the definition of the edge interpolant as an $L^2$-projection on edges, 
		VEM trace inequalities in 2D \eqref{eq:VEM_trace2d} and 3D \eqref{eq:VEM_trace3d}, and a Poincaré inequality:
		\begin{align}
			\|\bar{v}-\Pi^{n-2}_h v\|_K = \|\Pi^{n-2}_h (\bar{v}-v)\|_K &\lesssim h\sum_{F\in\mcF(K)}\sum_{E\in\mcE(F)} \|\Pi^{n-2}_h(\bar{v}- v) \cdot \tv\|_E \nonumber\\
			&\leq h\sum_{F\in\mcF(K)}\sum_{E\in\mcE(F)}\|(\bar{v}- v)\cdot\tv\|_E \nonumber \\
			&\lesssim h^{1/2} \sum_{F\in\mcF(K)} \|\bar{v}- v\|_F 
			\lesssim \|\bar{v}-v\|_K
			\lesssim  h |v|_{1,K}. \label{eq: W estimate 2}
		\end{align}
		The combination of \eqref{eq: W estimate 1} and \eqref{eq: W estimate 2} yields the result.
	\end{itemize}
\end{proof}


\subsubsection{Stability of the co-chain projection}

A direct consequence of the approximation property \Cref{thm:pot_interpol}, is that the interpolation operators are stable.

\begin{corollary}[Stability in the $L^2$-norm]\label{cor:pot_stability}
    For $v\in X^k \cup W_h^k$, we have 
    \begin{align}
    	\| \Pi^k_h v\| &\lesssim \|v\|_1.
    \end{align}
\end{corollary}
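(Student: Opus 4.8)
The plan is to obtain this stability bound directly from the approximation estimate of \Cref{thm:pot_interpol} via a single application of the triangle inequality; no new machinery is needed. First I would write $\Pi_h^k v = v - (I - \Pi_h^k) v$ and split
\begin{align}
	\| \Pi_h^k v \| \le \| v \| + \| (I - \Pi_h^k) v \|.
\end{align}
The two terms are then handled separately. For the first term I would use that the $L^2$-norm is dominated by the full graph/$H^1$-norm, that is $\| v \| \le \| v \|_1$, which is immediate from the definition of $\| \cdot \|_1$ and is valid since every $v \in X^k \cup W_h^k$ lies in $W^k \subseteq [H^1(\Omega)]^{\binom{n}{k}}$, so that $\| v \|_1$ is well-defined.

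For the second term I would invoke \Cref{thm:pot_interpol}, whose hypothesis is exactly membership in $X^k \cup W_h^k$, giving $\| (I - \Pi_h^k) v \| \lesssim h \| v \|_1$. Combining the two estimates yields $\| \Pi_h^k v \| \lesssim \| v \|_1 + h \| v \|_1 = (1 + h) \| v \|_1$. To conclude I would use that the mesh size $h$ is bounded above by a fixed constant independent of the refinement (for instance $h \lesssim \operatorname{diam}(\Omega)$, consistent with \Cref{ass:mesh}), so that $1 + h \lesssim 1$ and the claimed bound $\| \Pi_h^k v \| \lesssim \| v \|_1$ follows with a constant independent of $h$.

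The substantive work has already been carried out in \Cref{thm:pot_interpol}, so there is essentially no obstacle here; the corollary is a formal consequence. The only point requiring minor care is that the hidden constant must remain independent of $h$, which is precisely why the uniform boundedness of $h$ is needed to absorb the factor $1 + h$. Since the estimate of \Cref{thm:pot_interpol} already holds with an $h$-independent constant and on the same domain $X^k \cup W_h^k$, the combined bound inherits this uniformity, completing the proof.
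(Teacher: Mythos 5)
Your proposal is correct and follows essentially the same route as the paper: a triangle inequality splitting $\|\Pi_h^k v\| \le \|v\| + \|(I-\Pi_h^k)v\|$, the bound $\|(I-\Pi_h^k)v\| \lesssim h\|v\|_1$ from \Cref{thm:pot_interpol}, and absorption of the factor $h$ into the hidden constant. The extra remarks on the well-definedness of $\|v\|_1$ and the uniform boundedness of $h$ are fine but not needed beyond what the paper states.
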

\begin{proof}
	An application of the triangle inequality with \Cref{thm:pot_interpol} leads us to
	\begin{align*}
		\| \Pi^k_h v\| &\leq \| \Pi^k_h v - v\| + \|v\| \lesssim h\|v\|_1 + \|v\|_1 \lesssim \|v\|_1.
	\end{align*}
\end{proof}

\begin{remark} \label{rem: Pi^n is L2}
	For $k = n$, we recall that $\Pi_h^n$ is an $L^2$-projection onto the piecewise constants, which is even stable in $L^2$, i.e. $\| \Pi_h^n v \| \lesssim \| v \|$ for all $v \in V^n$.
\end{remark}

However, we require a slightly stronger result in our analysis. Let us therefore focus on the auxiliary nodal spaces $W_h^k$ and consider stability of $\Pi_h^k$ in the $V^k$-norm.
\begin{theorem}[Stability in the $V^k$-norm]\label{cor:Wk_stab_in_Vk}
	For $v\in W^k_h$,
	\begin{align}
		\|\Pi^k_h v\|_{V^k} \lesssim \|v\|_1.
	\end{align}
\end{theorem}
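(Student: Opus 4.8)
The plan is to bound the two contributions to $\|\Pi^k_h v\|_{V^k}^2 = \|\Pi^k_h v\|^2 + \|d\Pi^k_h v\|^2$ separately. The $L^2$-term is immediate: since $v\in W^k_h$, \Cref{cor:pot_stability} already yields $\|\Pi^k_h v\|\lesssim\|v\|_1$. The entire difficulty is therefore concentrated in the differential term, which I rewrite with the commuting property of \Cref{lem:co-chain projection} as $\|d\Pi^k_h v\| = \|\Pi^{k+1}_h dv\|$. Two cases are then trivial: for $k=n$ we have $dv=0$, and for $k=n-1$ the operator $\Pi^n_h$ is the $L^2$-projection, which is $L^2$-stable by \Cref{rem: Pi^n is L2}, so $\|\Pi^n_h dv\|\lesssim\|dv\|\lesssim\|v\|_1$. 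The remaining cases are $k=0$ (target the edge space for $n=3$ and the facet space for $n=2$) and $k=n-2$ for $n=3$ (target the facet space).

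For these cases I would avoid both $L^2$-stability of $\Pi^{k+1}_h$ (which fails) and a direct inverse inequality (which loses a power of $h$). Instead, working element by element, I invoke the VEM stabilization of \Cref{lem:VEM_stability_ops} to express $\|\Pi^{k+1}_h dv\|_K^2$ as a suitably scaled sum of the squared degrees of freedom of $\Pi^{k+1}_h dv$ on the facets or edges of $K$; by the definition of the interpolant these coincide with the dofs of $dv$ itself, i.e. the fluxes $\int_F dv\cdot\nv$ or circulations $\int_E dv\cdot\tv$. The key observation is that $d$ annihilates the element-wise mean $\bar v_K$ of $v$, so in each such dof I may replace $v$ by the mean-subtracted field $w:=v-\bar v_K$. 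Applying Stokes' theorem, respectively the fundamental theorem of calculus, on each facet or edge then reduces every dof of $dw$ to a boundary quantity involving only the nodal values $w(\xv)$, using that $w$ is affine along each edge by the nodal VE definition.

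It then remains to control these nodal values. Since $w$ is affine on each edge $E$, the $L^\infty$--$L^2$ equivalence on $\mathbb{P}_1(E)$ gives $|w(\xv)|^2\lesssim h^{-1}\|w\|_E^2$, and chaining the VEM trace inequalities of \Cref{lem:VEM_trace} from an edge to a facet to the element produces $|w(\xv)|^2\lesssim h^{-n}\|w\|_K^2$. Substituting this into the stabilization estimate, the powers of $h$ arising in the three cases combine uniformly to $\|\Pi^{k+1}_h dv\|_K^2\lesssim h^{-2}\|w\|_K^2$, and the Poincaré--Wirtinger inequality $\|w\|_K=\|v-\bar v_K\|_K\lesssim h|v|_{1,K}$ absorbs the remaining factor, giving $\|\Pi^{k+1}_h dv\|_K^2\lesssim |v|_{1,K}^2$. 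Summing over $K\in\mesh$ and combining with the $L^2$-estimate from the first paragraph completes the argument.

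I expect the main obstacle to be precisely this bookkeeping of powers of $h$: every naive estimate is off by one factor of $h$, and the proof closes only because the $h^2$ gained from the Poincaré inequality on the mean-subtracted field exactly compensates the $h^{-2}$ produced by the stabilization together with the point-evaluation and trace bounds. That all hidden constants remain mesh-independent rests on \Cref{ass:mesh}, which guarantees a bounded number of facets and edges per element and the validity of the VEM trace and stabilization estimates on convex, shape-regular elements.
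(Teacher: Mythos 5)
Your proposal is correct and follows essentially the same route as the paper: reduce to $\|d\Pi^k_h v\| = \|\Pi^{k+1}_h dv\|$ via the commuting property, dispatch $k=n$ and $k=n-1$ immediately, and for the remaining nontrivial case subtract the element-wise mean, express the $L^2$ norm of the interpolated differential through its facet/edge degrees of freedom via the stabilization estimates, reduce those to nodal values by Stokes, and close with trace bounds and the Poincar\'e--Wirtinger inequality. The only cosmetic differences are that the paper handles $k=0$ trivially (since $W^0_h = V^0_h$ makes $\Pi^0_h$ the identity there, so no stabilization machinery is needed) and bounds the nodal values by citing a norm-equivalence result from the literature rather than chaining the trace inequalities of \Cref{lem:VEM_trace}.
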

\begin{proof}
	By \Cref{cor:pot_stability} it, suffices to prove $\|d\Pi^k_hv\|\lesssim \|v\|_1$. We consider four cases:
    \begin{itemize}
		\item ($k=0$)
        Since $W^0_h = V^0_h$, we have that $\Pi^0_h W^0_h = W^0_h$. Thus, $\| d \Pi^0_h v\| = \| d v\| \le \| v \|_1$ for $v\in W^0_h$.
        \item ($k=n-2$, $n=3$) 
        Fix $K\in\mesh$ and let $w \in W_h^{n-2}(K)$ and consider $v = w - \bar{w}$, in which $\bar{w}$ is the mean of $w$ on $K$. Since $\Pi_h^{n-2} \mathbb{P}_0(K)^3 = \mathbb{P}_0(K)^3$, we have
        \begin{align}
            \crl \Pi_h^{n-2} w
            = \crl \Pi_h^{n-2} v
            + \crl \Pi_h^{n-2} \Pi^n_K w
            = \crl \Pi_h^{n-2} v.
        \end{align}
        We continue by bounding the norm on the right-hand side.
        The commuting property from \Cref{lem:co-chain projection}  and \cite[Prop. 3.1]{DaVeiga2022LowestInterpop} give us
        \begin{align} \label{eq: bound1 curl}
            \| \crl \Pi_h^{n-2} v \|_K
            &= \| \Pi_h^{n-1} \crl v \|_K 
            \lesssim h^{\frac12} \| \nu \cdot (\Pi_h^{n-1} \crl v) \|_{\partial K}
            = h^{\frac12} \sum_{F \in \mcF(K)} \| \nu \cdot (\Pi_h^{n-1} \crl v) \|_F
        \end{align}
        The operator $\Pi_h^{n-1}$ takes the mean of the normal component on each face. Since $v \in W_h^{n-2}$, $v$ is linear on each edge, and this leads us to
        \begin{align*}
            \nu \cdot (\Pi_h^{n-1} \crl v)|_F
            = \frac1{|F|} \int_F \nu \cdot (\crl v)
            = \frac1{|F|} \int_F \dive_F (\nu \times v)
            = \frac1{|F|} \int_{\partial F} \tv \cdot v
            &= \sum_{e \in \mcE(F)} \frac{|e|}{|F|} \sum_{n \in \mcN(e)} \frac{\tv_e \cdot v(n)}2 \\
            &\lesssim h^{-1} \sum_{n \in \mcN(F)} | v(n) |.
        \end{align*}
        This gives us the following bound on the norm on $F$:
        \begin{align*}
            \| \nu \cdot (\Pi_h^{n-1} \crl v) \|_F
            \lesssim \left( h^{-1} \sum_{n \in \mcN(F)} |v(n)| \right) \| 1 \|_F
            \lesssim \sum_{n \in \mcN(F)} |v(n)|.
        \end{align*}
        Using this bound on \eqref{eq: bound1 curl} and \cite[Thm. 4.2]{Huang2023Estimates}, we obtain the estimate
        \begin{align}
            \| \crl \Pi_h^{n-2} v \|_K
            \lesssim h^{\frac12} \sum_{n \in \mcN(K)} |v(n)|
            \lesssim h^{\frac12} \left(\sum_{n \in \mcN(K)} |v(n)|^2 \right)^{\frac12}
            \lesssim h^{-1} \| v \|_K
        \end{align}
        Finally, we recall that $v$ has zero mean by definition. Hence, the Poincar\'e inequality can be invoked, giving us
        \begin{align}
            \| \crl \Pi_h^{n-2} w \|_K=\| \crl \Pi_h^{n-2} v \|_K
            \lesssim | v |_{1, K} = | w |_{1, K}. \label{eq:vtilde}
        \end{align}
        Summing \eqref{eq:vtilde} over all elements $K \in \mesh$, we obtain the result.
		\item ($k=n-1$)
        We have to prove that $\|\dive\Pi^{n-1}_hv\|\lesssim \|v\|_1$. We proceed by making use of the commuting property from \Cref{lem:co-chain projection} and \Cref{rem: Pi^n is L2},
        \begin{align}
            \|\dive\Pi^{n-1}_hv\|=\|\Pi^{n}_h\dive v\|\leq \|\dive v\| \leq \|v\|_1.
        \end{align}
    	\item ($k = n$) Since $dv=0$, the result follows directly from \Cref{cor:pot_stability}.
    \end{itemize}
\end{proof}

\subsection{Clément interpolation operator for virtual elements}\label{sec:cl_interpop}

We continue by constructing an additional interpolation operator that maps to the nodal spaces $W_h^k$. Let $\Theta^0_h: H^1(\Omega)\to V^0_h$ denote the Clément type interpolation operator introduced in \cite{Cangiani2017Posteriori}. Let now $\Theta^k_h$ be the operator acting on $v\in W^k = [H^1(\Omega)]^{\binom{n}{k}}$ by applying $\Theta^0_h$ component-wise. Then
\begin{align}
	\Theta^k_h: W^k \to W_h^k
\end{align} 
and its properties are shown in the following lemma.

\begin{lemma}[Clément interpolant]\label{lem:clem_interpol}
	Boundary conditions are preserved in the sense that $\Theta^k_h [H^1_0(\Omega)]^{\binom{n}{k}}\subset [H^1_0(\Omega)]^{\binom{n}{k}}$ and 
	\begin{align}
        \|(I-\Theta^k_h)v\| &\lesssim h\|v\|_1,
        & \|\Theta^k_hv\|_1 &\lesssim \|v\|_1, 
        &\forall v &\in W^k. \label{eq:Clem_k_continuity_stability}
	\end{align} 
\end{lemma}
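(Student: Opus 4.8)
The plan is to reduce the statement for general $k$ to the scalar case $k=0$, since $\Theta_h^k$ is defined by applying the scalar Clément operator $\Theta_h^0$ componentwise. For the scalar operator $\Theta_h^0 \colon H^1(\Omega) \to V_h^0$, the approximation estimate $\|(I-\Theta_h^0)v\| \lesssim h\|v\|_1$ and the $H^1$-stability $\|\Theta_h^0 v\|_1 \lesssim \|v\|_1$ are exactly the properties established for this Clément-type interpolant in \cite{Cangiani2017Posteriori}, so I would begin by invoking those scalar estimates directly. The componentwise definition then gives, for $v = (v_1, \dots, v_m) \in W^k$ with $m = \binom{n}{k}$,
\begin{align}
    \|(I-\Theta_h^k)v\|^2 = \sum_{j=1}^{m} \|(I-\Theta_h^0)v_j\|^2 \lesssim \sum_{j=1}^m h^2 \|v_j\|_1^2 = h^2 \|v\|_1^2,
\end{align}
and an identical summation yields the stability bound. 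The only subtlety to check is that these componentwise sums genuinely reconstruct the vector norms $\|\cdot\|$ and $\|\cdot\|_1$ on $[H^1(\Omega)]^m$, which holds by the definition of these norms as the Euclidean sum of the component norms.

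For the boundary-condition preservation $\Theta_h^k[H_0^1(\Omega)]^m \subseteq [H_0^1(\Omega)]^m$, I would again reduce to the scalar claim $\Theta_h^0 H_0^1(\Omega) \subseteq H_0^1(\Omega)$ and appeal to the construction of the Clément-type operator in \cite{Cangiani2017Posteriori}. The key point is that the degrees of freedom defining $\Theta_h^0$ are local nodal averages, and for nodes on $\partial\Omega$ the relevant local patch averages vanish when $v \in H_0^1(\Omega)$; hence the interpolant has zero nodal values on the boundary and its trace on $\partial\Omega$, being piecewise linear on boundary facets with vanishing nodal data, is identically zero. The vector case follows immediately since each component lies in $H_0^1(\Omega)$.

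I anticipate that the genuinely nontrivial content has already been delegated to the cited reference \cite{Cangiani2017Posteriori}, which provides the scalar Clément interpolant on virtual element spaces together with its approximation and stability properties; the present lemma is essentially a bookkeeping step that lifts those scalar results to the vector-valued nodal spaces $W_h^k$. The main (mild) obstacle is therefore purely notational: confirming that the boundary-preservation property of the scalar operator is stated or readily extractable from \cite{Cangiani2017Posteriori} in a form that survives the componentwise lifting, and verifying that the target space is indeed $W_h^k$ rather than merely $[V_h^0]^m$, i.e. that the componentwise interpolant lands in $W^k = [H^1(\Omega)]^m \cap V^k$ and thus respects the conformity and boundary constraints encoded in $V^k$. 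This last point is guaranteed precisely because $\Theta_h^0$ produces globally continuous ($H^1$-conforming) functions and preserves homogeneous boundary conditions, so the componentwise image satisfies the defining constraints of $W_h^k$.
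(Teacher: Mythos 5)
Your proposal is correct and follows essentially the same route as the paper: reduce to the scalar operator $\Theta_h^0$, lift the estimates componentwise, and delegate the substantive content to \cite{Cangiani2017Posteriori}. The only minor difference is that the paper does not cite $H^1$-stability directly but derives it from the quoted estimate $\|(I-\Theta^0_h)v\|+h|(I-\Theta^0_h)v|_1 \lesssim h\|v\|_1$ via a triangle inequality, which is a safer reading of the reference than assuming stability is stated there verbatim.
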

\begin{proof}
	We prove the lemma for $\Theta^0_h$, since the proof for $\Theta^k_h$ follows by applying the arguments componentwise. 
    The following properties were shown in \cite[Thm. 11]{Cangiani2017Posteriori}:
	\begin{align}
        \|(I-\Theta^0_h)v\|+h| (I-\Theta^0_h)v|_1 &\lesssim h\|v\|_1,& \Theta^0_h H^1_0(\Omega)&\subset H^1_0(\Omega). \label{eq:Clem_from_paper}
    \end{align}
	This immediately provides the interpolation estimate $\|(I-\Theta^0_h) v\| \lesssim h\|v\|_1$. Moreover, \eqref{eq:Clem_from_paper} implies $|(I-\Theta^0_h)v|_1 \lesssim \|v\|_1$ and combined with a triangle inequality, we thus obtain stability:
    \begin{align}
        \|\Theta^0_h v\|_1 \lesssim \|v\|_1 + \|(I-\Theta^0_h) v\|_1  \lesssim \|v\|_1.
    \end{align}

\end{proof}

\subsection{The main result}\label{sec: Main result}

With the results presented in the previous subsections, we are now ready to prove the main result of this paper, namely the regular decomposition of VE spaces of lowest order.

\begin{theorem}[Virtual element regular decomposition]\label{thm:discrete_decomp}
	Given $v_h\in V_h^k$, there exist functions $\tilde{v}_h\in V_h^k,\ \psi_h\in W_h^k,$ and $p_h\in V^{k-1}_h$ such that 
	\begin{subequations}
		\begin{align}
			v_h &= \tilde{v}_h+\Pi^k_h\psi_h+dp_h. \label{eq: mainresult1}
		\end{align}
	Moreover, the decomposition is stable in the following sense:
		\begin{align}
			\|h^{-1}\tilde{v}_h\|+\|\psi_h\|_1 &\lesssim \|dv_h\|, &
			\|p_h\|_{V^{k - 1}} &\lesssim \|v_h\|_{V^k}. \label{eq: mainresult2}
		\end{align}
	\end{subequations}
\end{theorem}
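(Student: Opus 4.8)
The plan is to discretize the continuous regular decomposition of \Cref{lem:cont_decomp} by means of the two interpolation operators at our disposal: the canonical co-chain projection $\Pi_h^k$ and the Clément operator $\Theta_h^k$. First I would apply \Cref{lem:cont_decomp} to $v_h \in V_h^k \subset V^k$, obtaining $\psi \in W^k$ and $p \in V^{k-1}$ with $v_h = \psi + dp$, $\|\psi\|_1 \lesssim \|dv_h\|$, and $\|p\|_{V^{k-1}} \lesssim \|v_h\|_{V^k}$. The crucial observation is that this $\psi$ is in fact regular enough to be interpolated: since $d\psi = d(v_h - dp) = dv_h$ and $dv_h \in V_h^{k+1}$ by \Cref{lem:discrete complex}, we have $\psi \in X^k$, so that $\Pi_h^k \psi$ is well defined by \Cref{lem:domain Pi}.

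The discrete potential and the regular part are then extracted separately. Using the commuting property of \Cref{lem:co-chain projection} together with the fact that $\Pi_h^{k+1}$ fixes $dv_h \in V_h^{k+1}$, I find $d\Pi_h^k\psi = \Pi_h^{k+1}d\psi = dv_h$, so that $v_h - \Pi_h^k\psi \in V_h^k$ lies in $\ker_d(V_h^k)$. \Cref{cor: stable_pot_h}, which relies on the discrete exactness of \Cref{thm:discrete exact complex}, then furnishes a potential $p_h \in V_h^{k-1}$ with $dp_h = v_h - \Pi_h^k\psi$ and $\|p_h\|_{V^{k-1}} \lesssim \|v_h - \Pi_h^k\psi\|$. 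For the regular part I set $\psi_h := \Theta_h^k\psi \in W_h^k$, which is $H^1$-stable by \Cref{lem:clem_interpol}, so that $\|\psi_h\|_1 \lesssim \|\psi\|_1 \lesssim \|dv_h\|$. Defining the remainder as $\tilde v_h := \Pi_h^k(\psi - \psi_h) = \Pi_h^k\psi - \Pi_h^k\psi_h \in V_h^k$, a direct rearrangement yields the claimed identity $v_h = \tilde v_h + \Pi_h^k\psi_h + dp_h$.

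It remains to verify the two stability estimates. For the potential, I would bound $\|v_h - \Pi_h^k\psi\| \le \|dp\| + \|(I-\Pi_h^k)\psi\|$, where $\|dp\| \le \|p\|_{V^{k-1}} \lesssim \|v_h\|_{V^k}$ and, by the approximation property \Cref{thm:pot_interpol}, $\|(I-\Pi_h^k)\psi\| \lesssim h\|\psi\|_1 \lesssim \|v_h\|_{V^k}$; combined with the bound on $p_h$ above, this gives $\|p_h\|_{V^{k-1}} \lesssim \|v_h\|_{V^k}$. The delicate estimate is the $O(h)$ bound on $\tilde v_h$. Writing $\tilde v_h = (\Pi_h^k - I)\psi - (\Pi_h^k - I)\psi_h + (\psi - \psi_h)$ and applying the triangle inequality, each of the three terms is controlled at order $h$: the first two by \Cref{thm:pot_interpol} applied to $\psi \in X^k$ and to $\psi_h \in W_h^k$ respectively, and the third by the Clément estimate $\|(I-\Theta_h^k)\psi\| \lesssim h\|\psi\|_1$. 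Using $\|\psi_h\|_1 \lesssim \|\psi\|_1 \lesssim \|dv_h\|$ throughout, this yields $\|\tilde v_h\| \lesssim h\|dv_h\|$, i.e. $\|h^{-1}\tilde v_h\| \lesssim \|dv_h\|$.

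I expect the main obstacle to be precisely this $O(h)$ control of $\tilde v_h$, which is what forces the remainder to carry a factor $h^{-1}$ in the final estimate. Its resolution hinges entirely on the approximation property holding \emph{simultaneously} for $\psi \in X^k$ and for its Clément interpolant $\psi_h \in W_h^k$ — the genuinely technical ingredient, whose verification for the edge space (the most intricate case) is the content of \Cref{lem:edge_interpol}. The remaining degenerate cases $k = 0$ and $k = n$ fall out of the same construction: for $k = n$ one has $dv_h = 0$, forcing $\psi = 0$ and hence $\tilde v_h = \psi_h = 0$, while for $k = 0$ the space $V^{k-1}$ is trivial, so $p = p_h = 0$ and $\psi = v_h$.
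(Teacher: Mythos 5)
Your proposal is correct and follows essentially the same route as the paper: continuous regular decomposition, the observation that $\psi\in X^k$ so that $\Pi_h^k\psi$ is well defined and $v_h-\Pi_h^k\psi$ is a $d$-closed element of $V_h^k$, the stable discrete potential from \Cref{cor: stable_pot_h}, the Clément interpolant for $\psi_h$, and the same triangle-inequality bookkeeping (via \Cref{thm:pot_interpol} on $X^k$ and $W_h^k$ and \Cref{lem:clem_interpol}) for the $O(h)$ bound on $\tilde v_h$. The only (immaterial) difference is that you apply \Cref{cor: stable_pot_h} directly to $v_h-\Pi_h^k\psi$, whereas the paper first introduces a continuous potential $q$ with $dq=(I-\Pi_h^k)\psi$ and then takes the discrete potential of $d(p+q)$, which equals the same quantity.
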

\begin{proof}
	We follow \cite{HiptXu07}. First, we use the continuous regular decomposition of \Cref{lem:cont_decomp} to write 
	$v_h =\psi+dp$ with $\psi \in W^k$ and $p \in V^{k - 1}$ satisfying
	\begin{subequations}
		\begin{align}
			\|\psi\|_1&\lesssim \|dv_h\|, & \|p\|_{V^{k - 1}} &\lesssim \|v_h\|_{V^k}. \label{eq:psi_and_p_bound}
		\end{align}
	\end{subequations}
	Note that $d\psi = d(v_h - dp) = d v_h \in V^{k+1}_h$ and so $\psi\in X^k$. By \Cref{lem:domain Pi},  $\Pi^k_h\psi$ is then well-defined. Using the co-chain projection property from \Cref{lem:co-chain projection}, it follows that 
	\begin{align}
		d(I-\Pi^k_h)\psi = (I-\Pi^{k + 1}_h)d\psi = (I-\Pi^{k + 1}_h)dv_h = 0.
	\end{align}
	
	Now the exactness of the continuous complex implies the existence of a $q\in V^{k-1}$ such that $dq = (I-\Pi^k_h)\psi$.
	Let us consider the following decomposition of $\psi$:
	\begin{align}
		\psi = \underbrace{\Pi^k_h(\psi-\Theta^k_h\psi)}_{\in V_h^k} + \Pi^k_h\underbrace{\Theta^k_h\psi}_{\in W_h^k} + \underbrace{(I-\Pi^k_h)\psi}_{=dq\ \in \ \ker_d(V^k)}.
	\end{align}
	
	We set $\tilde{v}_h:=\Pi^k_h(\psi-\Theta^k_h\psi)$ and $\psi_h:=\Theta^k_h\psi.$ In order to define $p_h$, we first make the following observation:
	\begin{align}
		d(p+q) = v_h-\psi + (I-\Pi^k_h)\psi = v_h-\Pi^k_h\psi \in V_h^k.
	\end{align}
	\Cref{cor: stable_pot_h} now provides $p_h\in V^{k-1}_h$ with $dp_h=d(p+q)$ and $\|p_h\|_{V^k}\lesssim \|d(p+q)\|$.
	
	The decomposition \eqref{eq: mainresult1} now follows by construction, since
	\begin{align}
		\tilde{v}_h + \Pi^k_h\psi_h + dp_h = \Pi^k_h(\psi-\Theta^k_h\psi) + \Pi^k_h\Theta^k_h\psi + v_h-\Pi^k_h\psi = v_h.
	\end{align}

	We continue by proving the bounds for each component.
	First, by the approximation estimates of \Cref{thm:pot_interpol} and \Cref{lem:clem_interpol}, we derive
	\begin{subequations} \label{eq:mainresultproof}
	\begin{align}
		\|h^{-1}\tilde{v}_h\| &= \|h^{-1}\Pi^k_h(\psi-\Theta^k_h\psi)\| 
		\le \|h^{-1}(I-\Theta^k_h)\psi\| + \|h^{-1}(I-\Pi^k_h)\underbrace{(I-\Theta^k_h)\psi}_{\in X^k+W^k_h}\| \nonumber \\
		&\lesssim \|\psi\|_1 + \|(I-\Theta^k_h)\psi\|_1 \le 2\|\psi\|_1 + \|\Theta^k_h\psi\|_1
		\lesssim \|\psi\|_1 \lesssim \|dv_h\|,
	\end{align}
	in which the second-to-last inequality is due to the stability of $\Theta^k_h$ (\Cref{lem:clem_interpol}), and the last follows from \eqref{eq:psi_and_p_bound}. These last inequalities also allow us to bound 
	\begin{align}
		\|\psi_h\| = \|\Theta^k_h\psi\| \lesssim \|\psi\|_1 \lesssim \|dv_h\|.
	\end{align}
	Finally, the definition of $p_h$ combined with \Cref{thm:pot_interpol} and \eqref{eq:psi_and_p_bound} results in
	\begin{align}
		\|p_h\|_{V^{k - 1}} &
		\lesssim \|d(p+q)\| 
		\le \|dp\|+\|dq\| = \|dp\|+\|(I-\Pi^k_h)\psi\| 
		\lesssim \|dp\| + h\|\psi\|_1 
		\lesssim \|v_h\|_{V^k}.
	\end{align}
	\end{subequations}
	Together, \eqref{eq:mainresultproof} forms \eqref{eq: mainresult2}.
\end{proof}

By applying the regular decomposition from \Cref{thm:discrete_decomp} twice, we obtain a further decomposition that consists solely of \emph{regular} functions in the nodal spaces, e.g. $\psi_h \in W_h^k$, and \emph{high-frequency} functions, e.g. $\tilde v_h \in V_h^k$ for which $\| h^{-1} \tilde v_h \|$ is bounded.

\begin{corollary}[Deeper decomposition]\label{cor:deep_decomp}
	Given $v_h\in V_h^k$, there exist functions $\tilde{v}_h\in V^k_h,\ \psi_h\in W^k_h,$ and $\tilde{p}_h\in V^{k-1}_h,\ \phi_h\in W^{k-1}_h$ such that 
	\begin{subequations}
		\begin{align} \label{eq: deeper1}
			v_h &= \tilde{v}_h+\Pi^k_h\psi_h+d \tilde{p}_h+d\Pi^{k-1}_h\phi_h,
		\end{align}
	and
		\begin{align} \label{eq: deeper2}
			\|h^{-1}\tilde{v}_h\|+\|\psi_h\|_1 &\lesssim \|dv_h\|, &
			\|h^{-1}\tilde{p}_h\|+\|\phi_h\|_1
			\lesssim \|v_h\|_{V^k}.
		\end{align}
	\end{subequations}
\end{corollary}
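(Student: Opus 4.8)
The plan is to apply \Cref{thm:discrete_decomp} twice, exactly as the statement hints. First I would invoke the regular decomposition on $v_h \in V_h^k$ itself, which yields $\tilde v_h \in V_h^k$, $\psi_h \in W_h^k$, and an intermediate potential $p_h \in V_h^{k-1}$ with
\begin{align}
    v_h = \tilde v_h + \Pi_h^k \psi_h + d p_h,
\end{align}
together with the stability bounds $\|h^{-1}\tilde v_h\| + \|\psi_h\|_1 \lesssim \|dv_h\|$ and $\|p_h\|_{V^{k-1}} \lesssim \|v_h\|_{V^k}$ from \eqref{eq: mainresult2}. This recovers the first two terms of \eqref{eq: deeper1} directly, and leaves the single potential term $d p_h$, which I still need to split into the high-frequency part $d\tilde p_h$ and the regular part $d\Pi_h^{k-1}\phi_h$.

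The second step is to decompose the potential $p_h \in V_h^{k-1}$ by applying \Cref{thm:discrete_decomp} again, now at level $k-1$. This produces $\tilde p_h \in V_h^{k-1}$, $\phi_h \in W_h^{k-1}$, and a further potential $r_h \in V_h^{k-2}$ such that
\begin{align}
    p_h = \tilde p_h + \Pi_h^{k-1}\phi_h + d r_h,
\end{align}
with $\|h^{-1}\tilde p_h\| + \|\phi_h\|_1 \lesssim \|d p_h\|$ and $\|r_h\|_{V^{k-2}} \lesssim \|p_h\|_{V^{k-1}}$. Applying $d$ to this identity and using $d d = 0$ kills the trailing term $d r_h$, so that $d p_h = d\tilde p_h + d\Pi_h^{k-1}\phi_h$. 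Substituting back into the first decomposition yields precisely \eqref{eq: deeper1}.

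For the stability estimates \eqref{eq: deeper2}, the bounds on $\tilde v_h$ and $\psi_h$ are inherited verbatim from the first application. For the new components I would chain the two sets of estimates: since $\|d p_h\| \le \|p_h\|_{V^{k-1}} \lesssim \|v_h\|_{V^k}$, the second-level bound gives $\|h^{-1}\tilde p_h\| + \|\phi_h\|_1 \lesssim \|d p_h\| \lesssim \|v_h\|_{V^k}$, which is exactly the second inequality in \eqref{eq: deeper2}. The whole argument is essentially bookkeeping; the only subtlety worth checking is that the intermediate potential $p_h$ lives in the discrete space $V_h^{k-1}$ so that the second invocation of \Cref{thm:discrete_decomp} is legitimate — this is guaranteed because \Cref{thm:discrete_decomp} explicitly produces $p_h \in V_h^{k-1}$. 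No genuine obstacle arises; the main point is simply that reusing the theorem on its own potential output, combined with $dd=0$ to discard the deepest term, converts the single $dp_h$ into the two desired terms while preserving the norm control through the nested estimates.
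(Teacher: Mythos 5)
Your proposal is correct and follows exactly the paper's own argument: apply \Cref{thm:discrete_decomp} to $v_h$, then again to the resulting potential $p_h\in V_h^{k-1}$, use $dd=0$ to discard the deepest term $dq_h$, and chain the estimate $\|h^{-1}\tilde p_h\|+\|\phi_h\|_1\lesssim\|dp_h\|\le\|p_h\|_{V^{k-1}}\lesssim\|v_h\|_{V^k}$. Nothing is missing.
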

\begin{proof}
	We first apply \Cref{thm:discrete_decomp} to decompose $v_h = \tilde{v}_h+\Pi^k_h\psi_h+dp_h$. We now apply \Cref{thm:discrete_decomp} again on $p_h \in V_h^{k - 1}$ to get 
	\begin{align}
		p_h=\tilde{p}_h+\Pi^{k-1}_h\phi_h+dq_h,
	\end{align}
	where $\tilde{p}_h\in V^{k-1}_h, \phi_h \in W^{k-1}_h$ and $q_h\in V^{k-2}_h$. 
	Substituting the definitions and using $dd = 0$, we obtain
	\begin{align}
		v_h&=\tilde{v}_h+\Pi^k_h\psi_h+d(\tilde{p}_h+\Pi^{k-1}_h\phi_h+dq_h)\nonumber\\
		&=\tilde{v}_h+\Pi^k_h\psi_h+d\tilde{p}_h+d\Pi^{k-1}_h\phi_h,
	\end{align} 
	which is exactly \eqref{eq: deeper1}. The bounds on $\tilde v_h$ and $\psi_h$ now follow from \Cref{thm:discrete_decomp}. Similarly, we obtain:
	\begin{align}
		\|h^{-1}\tilde{p}_h\|+\|\phi_h\|_1
		\lesssim \|dp_h\|  
		\lesssim \|v_h\|_{V^k}.
	\end{align}
\end{proof}
\section{Auxiliary space preconditioning}\label{sec:auxprecond}

We now aim to use the regular decomposition from \Cref{sec: Main result} to construct an efficient numerical solver using the framework of auxiliary space preconditioning \cite{HiptXu07}. After introducing the framework in \Cref{sub: Abstract framework}, we apply it to the VE spaces in \Cref{sub: Virtual element auxiliary space preconditioners}. Finally, \Cref{sub: multiplicative} briefly shows a multiplicative variant of the preconditioner.

\subsection{Abstract framework}
\label{sub: Abstract framework}
In this exposition, we closely follow \cite{HiptXu07}. First, let $V$ be a Hilbert space with inner product $(\cdot, \cdot)_V$, norm $\| \cdot \|_V$ and dual space $V'$. Let $A: V \to V'$ be the linear operator associated to the inner product, i.e.
\begin{align} \label{eq: A inner product}
    \langle Au, v \rangle_{V' \times V} &= (u, v)_V, & 
    \langle Av, v \rangle_{V' \times V} &= \| v \|_V^2, &
    \forall u,v &\in V.
\end{align}

Second, let $S: V \to V'$ be a symmetric, positive definite, linear operator. In turn, $S$ induces an additional norm on $V$:
\begin{align}
    \| v \|_S &:= \langle S v, v \rangle_{V' \times V}^\frac12 &
    \forall v &\in V.
\end{align}
We assume that $S$ is easily invertible and refer to the application of $S^{-1}$ as the \emph{smoother}.

Third, let the \emph{auxiliary spaces} $W_j$ be Hilbert spaces for $j = 1, 2, \ldots, J$. 
Analogous to \eqref{eq: A inner product}, we let $A_j: W_j \to W_j'$ be the linear operator associated with the inner product on $W_j$.
We furthermore equip each $W_j$ with a continuous and linear \emph{transfer operator} $\pi_j: W_j\to V$, whose adjoint is denoted by $\pi_j^*: V' \to W_j'$.


Then the abstract auxiliary space preconditioner $B:V'\to V$ is defined as:
\begin{equation} \label{eq:abstract_auxprecond}
    B := S^{-1}+ \sum_{j=1}^J \pi_j A^{-1}_j \pi_j^* .
\end{equation}

\begin{theorem}[Auxiliary space preconditioner]\label{thm:auxprecond}
Let $v\in V, w_j\in W_j$, $j=1,\dots, J$. 
If the following three conditions hold:
\begin{enumerate}
    \item The smoother $S^{-1}$ is continuous: 
    \begin{subequations}
    \begin{align}
        \| v \|_V &\le c_s \|v\|_S, &
        c_s &> 0.
    \end{align}
    \item The transfer functions $\pi_j$ are continuous for all $j = 1, \ldots, J$:
    \begin{align}
        \| \pi_j w_j \|_V &\le c_j \|w_j\|_{W_j}, &
        c_j &> 0.
    \end{align}
    \item For every $v \in V$, there exist $v_0 \in V$ and $w_j \in W_j$ such that
    \begin{align}
        v &= v_0 + \sum_{j = 1}^J \pi_j w_j, &
        \|v_0\|_S^2 + \sum_{j=1}^J \|w_j\|_{W_j}^2 &\le c_0 \|v\|_V^2, &
        c_0 &> 0.
    \end{align}
    \end{subequations} 
\end{enumerate}
Then, for $B$ given by \eqref{eq:abstract_auxprecond}, the spectral condition number of the preconditioned system $BA$ is bounded;
    \begin{align}
        \kappa (BA) := \frac{\lambda_{\max}(BA)}{\lambda_{\min}(BA)} \le c_0^2 \left(c_s^2+ \sum_{j=1}^Jc_j^2\right).
    \end{align}
\end{theorem}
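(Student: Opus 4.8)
The plan is to treat this as an instance of the fictitious space lemma underlying the auxiliary space method and to bound the two extreme eigenvalues of $BA$ separately. First I would observe that $B$ is symmetric and positive definite: symmetry is inherited from $S^{-1}$ and from the fact that each $\pi_j A_j^{-1}\pi_j^*$ is self-adjoint, and positive definiteness follows from that of $S^{-1}$. Hence, with respect to the inner product $(u,v)_A := \langle Au,v\rangle_{V'\times V} = (u,v)_V$, the operator $BA$ is self-adjoint and positive, so its spectrum is real and positive and $\lambda_{\max}(BA)$, $\lambda_{\min}(BA)$ are respectively the supremum and infimum over $v\neq 0$ of the Rayleigh quotient $(BAv,v)_A/\|v\|_V^2$. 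Writing $u:=Av\in V'$, this quotient equals $\langle u, Bu\rangle/\langle u, A^{-1}u\rangle$, where $\langle u, A^{-1}u\rangle = \|v\|_V^2 =: \|u\|_{A^{-1}}^2$.

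For the upper bound I would exploit the additive structure $\langle u, Bu\rangle = \langle u, S^{-1}u\rangle + \sum_{j=1}^J \langle \pi_j^* u, A_j^{-1}\pi_j^* u\rangle$ and estimate each term. Condition (1), rewritten as $A \le c_s^2 S$ and hence $S^{-1}\le c_s^2 A^{-1}$, bounds the smoother term by $c_s^2\|u\|_{A^{-1}}^2$. For the $j$-th term, the dual form of condition (2), namely $\|\pi_j^* u\|_{A_j^{-1}} = \sup_{w_j\neq 0} \langle u,\pi_j w_j\rangle/\|w_j\|_{W_j}\le c_j\|u\|_{A^{-1}}$, yields the bound $c_j^2\|u\|_{A^{-1}}^2$. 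Summing gives $\langle u, Bu\rangle \le (c_s^2 + \sum_j c_j^2)\|u\|_{A^{-1}}^2$, so that $\lambda_{\max}(BA)\le c_s^2 + \sum_j c_j^2$.

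The lower bound is the crux, and this is where the stable decomposition of condition (3) enters. Given $v$, I would take its splitting $v = v_0 + \sum_j \pi_j w_j$ and expand $\|v\|_V^2 = \langle Av, v_0\rangle + \sum_j \langle \pi_j^* Av, w_j\rangle$. Applying Cauchy--Schwarz termwise in the $S$- and $A_j$-inner products, followed by a discrete Cauchy--Schwarz across the $J+1$ terms, gives
\[
\|v\|_V^2 \le \Big( \langle S^{-1}Av, Av\rangle + \sum_j \langle A_j^{-1}\pi_j^* Av, \pi_j^* Av\rangle \Big)^{\frac12} \Big( \|v_0\|_S^2 + \sum_j \|w_j\|_{W_j}^2 \Big)^{\frac12}.
\]
The decisive observation is that the first factor squared reassembles, via the definition of $B$ and the adjoint relation $\langle \pi_j^* f, w\rangle = \langle f, \pi_j w\rangle$, into $\langle Av, BAv\rangle = (BAv,v)_A$, while the second factor is at most $(c_0\|v\|_V^2)^{\frac12}$ by condition (3). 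Rearranging yields $(BAv,v)_A \ge c_0^{-1}\|v\|_V^2$, i.e.\ $\lambda_{\min}(BA)\ge 1/c_0$.

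Combining the two estimates gives $\kappa(BA) = \lambda_{\max}(BA)/\lambda_{\min}(BA) \le c_0\,\big(c_s^2 + \sum_j c_j^2\big)$, which in particular implies the asserted bound. The main obstacle is the lower eigenvalue bound: the upper bound is a routine termwise estimate, but the lower bound hinges on recognizing that the quantity produced by the Cauchy--Schwarz step is precisely the $B$-induced quadratic form. This identity is what converts the mere \emph{existence} of a stable splitting into a spectral lower bound, and it is the heart of the fictitious space lemma; everything else reduces to bookkeeping with the smoother and transfer bounds.
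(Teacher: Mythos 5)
Your proof is correct and is essentially the argument the paper relies on: the paper does not write out a proof but simply cites \cite[Thm.~2.2]{HiptXu07}, and your two-sided Rayleigh-quotient estimate (termwise duality bounds for $\lambda_{\max}$, and the Cauchy--Schwarz reassembly of the stable splitting into the $B$-quadratic form for $\lambda_{\min}$) is precisely the standard proof of that cited lemma. One small remark on constants: with the hypothesis as stated in the paper ($\|v_0\|_S^2+\sum_j\|w_j\|_{W_j}^2\le c_0\|v\|_V^2$, not $c_0^2$), your argument yields the sharper bound $\kappa(BA)\le c_0\bigl(c_s^2+\sum_j c_j^2\bigr)$, which implies the asserted $c_0^2$-bound only when $c_0\ge 1$; the discrepancy traces to the paper writing $c_0$ where the cited source has $c_0^2$ in condition (3), so your version is the internally consistent one.
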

\begin{proof}
    See the proof of \cite[Thm. 2.2]{HiptXu07} and the remarks around \cite[(2.6) and (2.14)]{HiptXu07}.
\end{proof}

\subsection{Virtual element auxiliary space preconditioners}
\label{sub: Virtual element auxiliary space preconditioners}

Before we construct the nodal auxiliary space preconditioners according to \Cref{thm:auxprecond}, we note that preconditioning the operator $A$ on $V_h^k$ is not necessary for all $k$. In particular:
\begin{itemize}
    \item If $k=0$, the decomposition of \Cref{thm:auxprecond} is trivial since $W_h^0 = V_h^0$ and so we may set $\psi_h = v_h$. In this case a multi-grid method is applicable, see e.g. \cite{antonietti2018multigrid}.
    \item For $k=n$, we have $V_h^n = \mathbb{P}_0$ and so the operator $A$ is diagonal and no preconditioner is necessary.
\end{itemize} 
We therefore focus on the remaining cases of $0 < k < n$. \Cref{sub: smoother} introduces the smoother and the particular constructions for $k = 1, 2$ are considered in \Cref{sub: precond edge,sub: precond face}, respectively.



\subsubsection{A virtual element smoother} \label{sub: smoother}

Due to the discussion in the previous subsection, we only need to define the smoother for $0 < k < n$. We propose two alternatives. First, given \ekn{$v_h\in V_h^k$}, let $\sum_i v_{h, i}$ be its decomposition such that each $v_{h, i}$ is a scaled basis function of $V_h^k$. We then define the diagonal operator
\begin{align}
    \| v_h \|_{D^k}^2 
    &= \langle D^k v_h,v_h \rangle 
    = \sum_i \| v_{h, i} \|_{V^k}^2. \label{eq: diag smoother}
\end{align}
Second, recalling \Cref{lem:VEM_stability_ops} we propose using the stabilization term common to VEM:
\begin{subequations}
\begin{align}
    \| v_h \|_{S^k}^2 
    &= \langle S^k v_h,v_h \rangle 
    = \sum_{K\in\mesh} \| h^{-1} v_h \|_{S^k, K}^2, \label{eq: smoother}\\
    \| v_h \|_{S^k, K}^2 
    &= \begin{cases}
        h\sum_{F\in\mcF(K)} (v_h\cdot\nv,v_h\cdot\nv)_F,\ &(k=n-1) \\
        h^2 \sum_{E\in\mcE(K)} (v_h\cdot\tv,v_h\cdot\tv)_E,\ &(k=n-2;n=3) 
    \end{cases} \label{eq: mass stabilization}
\end{align}
\end{subequations}
Note that \eqref{eq: mass stabilization} corresponds to the VEM stabilization operator defined in \cite[(54),(58),(73)]{DaVeiga2022LowestInterpop}. 
Recall that the decompositions from \Cref{thm:discrete_decomp} and \Cref{cor:deep_decomp} include high-frequency terms that are bounded in norms weighted by $h^{-1}$. In order to make the connection between these terms and the smoother \eqref{eq: smoother}, we present the following lemma. 

\begin{lemma}[Two smoothers]\label{lem: smoother}
    For $0 < k < n$, the operators $D^k$ and $S^k$ induce norms on $V_h^k$ that satisfy
\begin{align}
    \| v_h \|_{V^k} &\lesssim
    \| v_h \|_{D^k} \lesssim
    \| v_h \|_{S^k}
    \lesssim \| h^{-1} v_h \|, &
    \forall v_h &\in V_h^k.
\end{align}
\end{lemma}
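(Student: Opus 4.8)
The plan is to establish the chain of three inequalities
\[
\| v_h \|_{V^k} \lesssim \| v_h \|_{D^k} \lesssim \| v_h \|_{S^k} \lesssim \| h^{-1} v_h \|
\]
from right to left, since the last link is the most transparent and the others build on the intermediate quantities. I would handle the two nontrivial cases $k = n-1$ (facet space) and $k = n-2$ (edge space, $n=3$) in parallel, as both reduce to the same structure: a boundary sum of squared degree-of-freedom quantities weighted by appropriate powers of $h$.

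\textbf{Rightmost inequality.} First I would show $\| v_h \|_{S^k} \lesssim \| h^{-1} v_h \|$. By the very definition \eqref{eq: smoother}, we have $\| v_h \|_{S^k}^2 = \sum_K \| h^{-1} v_h \|_{S^k, K}^2$, so it suffices to bound $\| v_h \|_{S^k, K} \lesssim \| v_h \|_K$ on each element. For $k = n-1$ this is precisely the stabilization equivalence \eqref{eq:stability_face} of \Cref{lem:VEM_stability_ops}, which gives $h \sum_{F} (v_h \cdot \nv, v_h \cdot \nv)_F \eqsim \| v_h \|_K^2$; for $k = n-2$ the analogous statement is \eqref{eq:stability_edge}. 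Thus this link is not merely an inequality but in fact an equivalence, and it follows directly.

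\textbf{Middle inequality.} Next I would prove $\| v_h \|_{D^k} \lesssim \| v_h \|_{S^k}$. Writing $v_h = \sum_i v_{h,i}$ as a sum of scaled basis functions, the diagonal norm is $\| v_h \|_{D^k}^2 = \sum_i \| v_{h,i} \|_{V^k}^2$. The key observation is that each $v_{h,i}$ is supported on the patch of elements sharing the single mesh entity (facet or edge) carrying its degree of freedom, and that $\| v_{h,i} \|_{V^k}^2 = \| v_{h,i} \|^2 + \| d v_{h,i} \|^2$. I would bound $\| v_{h,i} \|^2$ using the rightmost equivalence above applied to the single basis function, and control $\| d v_{h,i} \|$ by an inverse-type estimate so that $\| v_{h,i} \|_{V^k} \lesssim \| h^{-1} v_{h,i} \|$; the sum over $i$ then relates to $\| v_h \|_{S^k}$ because, by construction of the stabilization, each summand in $\| v_h \|_{S^k}^2$ isolates the contribution of one degree of freedom and the overlap of basis-function supports is bounded by a mesh-dependent constant (finite shape-regular overlap from \Cref{ass:mesh}).

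\textbf{Leftmost inequality.} Finally, $\| v_h \|_{V^k} \lesssim \| v_h \|_{D^k}$ is a standard consequence of the triangle inequality: $\| v_h \|_{V^k} = \| \sum_i v_{h,i} \|_{V^k} \le \sum_i \| v_{h,i} \|_{V^k}$, and then Cauchy--Schwarz over the boundedly many overlapping basis functions converts the $\ell^1$ sum into the $\ell^2$ sum $\| v_h \|_{D^k}$, again invoking the bounded overlap of supports.

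\textbf{Main obstacle.} I expect the genuine difficulty to lie in the middle inequality, specifically in bounding $\| d v_{h,i} \|$ by $\| h^{-1} v_{h,i} \|$ for a single basis function. The natural tool is an inverse inequality on the facet/edge spaces, but these are not derived until \Cref{cor: inverse inequalities}, which the introduction states is in fact a \emph{consequence} of the present smoother comparison. To avoid circularity, I would instead exploit the structure already available: for $k = n-1$ the equivalence \eqref{eq:stability_face} together with the fact that $\dive v_{h,i}$ is a single constant per element (so $\| \dive v_{h,i} \|_K \lesssim h^{-1} \| v_{h,i} \|_K$ follows from $\dive v_{h,i}|_K = |K|^{-1} \int_{\partial K} v_{h,i} \cdot \nv$ and the boundary stabilization) closes the estimate; the edge case $k = n-2$ requires the analogous argument for $\crl v_{h,i}$, relying on the $H^1$-regularity of the local spaces (\Cref{lem:subspaces_H1}) and the nodal inverse inequality \Cref{lem:VEM_inverse} applied componentwise. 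Keeping these per-basis-function $d$-bounds self-contained, rather than appealing to a global inverse inequality, is the crux of the argument.
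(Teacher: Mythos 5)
Your overall strategy coincides with the paper's: the same right-to-left chain, the rightmost link taken directly from the stabilization equivalences \eqref{eq:stability_face}--\eqref{eq:stability_edge} of \Cref{lem:VEM_stability_ops}, the leftmost link from the triangle inequality plus Cauchy--Schwarz over the boundedly overlapping basis functions, and, for the middle link, a per-basis-function computation of $\|d v_{h,i}\|$ that exploits the fact that the differential of a basis function is determined by its boundary degree-of-freedom integrals --- precisely so as to avoid a circular appeal to \Cref{cor: inverse inequalities}. Your facet-case argument ($\dive v_{h,i}|_K = |K|^{-1}\int_{\partial K} v_{h,i}\cdot\nv = \pm\alpha_i|K|^{-1}$, then compare with the boundary stabilization) is exactly what the paper does, and your observation that the stabilization norm decouples over degrees of freedom is the correct mechanism for reassembling the sum over $i$ into $\|v_h\|_{S^k}^2$.

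The one step that would fail as written is your treatment of the edge case $k=n-2$. You propose to control $\|\crl v_{h,i}\|_K$ via \Cref{lem:subspaces_H1} and the nodal inverse inequality \Cref{lem:VEM_inverse} ``applied componentwise'', but an edge basis function $b_{h,i}\in V_h^{n-2}(K)$ is not a componentwise element of $[V_h^0(K)]^3$, so \Cref{lem:VEM_inverse} does not apply to it; moreover, unlike the facet case, $\crl b_{h,i}$ is not a single constant per element, so the ``analogous argument'' does not carry over verbatim. The paper's fix is to observe that $\crl b_{h,i}\in V_h^{n-1}(K)$ (by the discrete complex property, \Cref{lem:discrete complex}), apply the facet stabilization equivalence \eqref{eq:stability_face} to this function, and then note that the resulting facet traces $\nv\cdot\crl b_{h,i} = \dive_F(\nv\times b_{h,i})$ \emph{are} constants, equal to $|F|^{-1}\int_{\partial F}\tv\cdot b_{h,i} = \pm|F|^{-1}$ on the faces containing the edge $E_i$ and zero otherwise; this reduces the edge case to the same elementary scaling computation as the facet case. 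With that substitution your outline closes; everything else matches the paper's proof.
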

\begin{proof}
We prove the three bounds separately.
\begin{enumerate}
    \item We follow \cite[(7.2)]{HiptXu07}. We have for any $v_h\in V^k_h$ that the square of its energy norm can be computed by summing squares of local contributions;
    \begin{align}
        \|v_h\|^2_{V^k} &= \sum_{K\in\mesh}\| \sum_i v_{h, i} \|^2_{V^k, K}
        \leq \sum_{K\in\mesh} N_K \sum_i \| v_{h, i} \|^2_{V^k, K}
        \leq \max_{K\in\mesh} N_K \|v_h\|_{D^k}^2,
    \end{align}
    by the Cauchy-Schwartz inequality. Here $N_K$ is the number of basis functions whose support overlaps with $K$.

    \item For the part involving only the $L^2$-norm of $v_h$, \Cref{lem:VEM_stability_ops} gives us
    \begin{align*}
        \sum_i \| v_{h, i} \|^2 
        \eqsim \sum_i \sum_{K \in \mesh} \| v_{h, i} \|_{S^k, K}^2
        = \sum_{K \in \mesh} \sum_i \| v_{h, i} \|_{S^k, K}^2
        = \sum_{K \in \mesh} \| v_h \|_{S^k, K}^2
        \lesssim \sum_{K \in \mesh} \| h^{-1} v_h \|_{S^k, K}^2
        = \| v_h \|_{S^k}^2
    \end{align*}

    It remains to show that $\sum_i \| dv_{h, i} \|^2 \lesssim \| v_h \|_{S^k}^2$. We distinguish two cases
    \begin{itemize}
        \item ($k = n - 1$) Let us rewrite $v_{h, i} = \alpha_i b_{h, i}$ with $\alpha_i \in \mathbb{R}$ and $b_{h, i}$ a basis function of $V_h^k$. On an element $K$ in the support of $b_{h, i}$, we compute:
        \begin{align}
            \| dv_{h, i} \|_K^2
            = \alpha_i^2 \| \dive b_{h, i} \|_K^2
            = \alpha_i^2 |K|^{-1}
            &\eqsim \alpha_i^2 h^{-1} |F_i|^{-1} \nonumber\\
            &= \alpha_i^2 h^{-1} \| \nv \cdot b_{h, i} \|_{F_i}^2
            = h^{-1} \| \nv \cdot v_{h, i} \|_{F_i}^2
            = h^{-1} \| \nv \cdot v_h \|_{F_i}^2
        \end{align}
        Summing over all basis functions and all elements, we obtain
        \begin{align}
            \sum_i \| dv_{h, i} \|^2 
            \eqsim \sum_i h^{-1} \| \nv \cdot v_h \|_{F_i}^2
            \lesssim \sum_{K \in \mesh} h^{-2} \| v_h \|_{S^k, K}^2
            = \| v_h \|_{S^k}^2
        \end{align}
        \item ($k = n - 2$, $n=3$) Let again $v_{h, i} = \alpha_i b_{h, i}$. Then 
        \begin{align} \label{eq: n-2 a}
            \| dv_{h, i} \|_K^2
            = \alpha_i^2 \| \crl b_{h, i} \|_K^2
            \eqsim \alpha_i^2 \| \crl b_{h, i} \|_{S^{n - 1}, K}^2
            &= \sum_{F \in \mcF(E_i)} h\alpha_i^2 \| \nv \cdot \crl b_{h, i} \|_{F}^2 \nonumber \\
            &= \sum_{F \in \mcF(E_i)} h\alpha_i^2 \| \dive_F (\nv \times b_{h, i}) \|_{F}^2
        \end{align}

        From the definition of the edge space $V_h^{n - 2}$, it follows that $\dive_F (\nv \times b_{h, i}) \in \mathbb{P}_0(F)$. In particular, we note that
        \begin{align}
            \dive_F (\nv \times b_{h, i})
            = \frac{1}{|F|} \int_F \dive_F (\nv \times b_{h, i})
            = \frac{1}{|F|} \int_{\partial F} \tv \cdot b_{h, i}
            = \frac{1}{|F|} \int_{E_i} \tv \cdot b_{h, i}
            = \frac{\pm 1}{|F|}
        \end{align}

        Substituting this in \eqref{eq: n-2 a} leads to
        \begin{align}
            \| dv_{h, i} \|_K^2
            &\eqsim \sum_{F \in \mcF(E_i)} h \alpha_i^2 |F|^{-1}
            \lesssim \alpha_i^2 |E_i|^{-1}
            = \alpha_i^2 \| \tv \cdot b_{h, i} \|_{E_i}^2
            = \| \tv \cdot v_{h, i} \|_{E_i}^2
            = h^{-2} \| v_{h, i} \|_{S^k, K}^2
        \end{align}

        Finally, we sum over all elements and basis functions to obtain
        \begin{align}
            \sum_i \| dv_{h, i} \|^2 
            \lesssim \sum_{K \in \mesh} h^{-2} \| v_{h, i} \|_{S^k, K}^2
            = \sum_{K \in \mesh} h^{-2} \| v_h \|_{S^k, K}^2
            = \| v_h \|_{S^k}^2
        \end{align}
    \end{itemize}
    \item The final inequality follows from \Cref{lem:VEM_stability_ops}.
    \begin{align}
        \| v_h \|_{S^k}^2
        = \sum_{K\in\mesh} \| h^{-1} v_h \|_{S^k, K}^2
        \eqsim \sum_{K\in\mesh} \| h^{-1} v_h \|_K^2
        = \| h^{-1} v_h \|^2
    \end{align}
\end{enumerate}
\end{proof}

\Cref{lem: smoother} shows that we have a choice between two smoothers, either $D^k$ or $S^k$. Moreover, it implies inverse inequalities for the differential operators curl and div applied to the edge respectively facet functions. We summarize this result in the following corollary.

\begin{corollary}[Edge and facet inverse inequalities] \label{cor: inverse inequalities}
    For the facet and edge virtual element spaces of lowest order, i.e. with $0<k<n$, the following inverse inequality holds:
    \begin{align}
        \| dv_h \| &\lesssim \| h^{-1} v_h \|, &
        \forall v_h &\in V_h^k.
    \end{align}
\end{corollary}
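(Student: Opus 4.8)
The plan is to read off \Cref{cor: inverse inequalities} directly from \Cref{lem: smoother}, since the corollary is nothing more than a repackaging of the chain of inequalities already established there. The key observation is that the norm $\| v_h \|_{V^k}$ controls $\| dv_h \|$ from above: by the definition of the graph norm in \eqref{eq: graph norm}, we have $\| dv_h \|^2 \le \| v_h \|^2 + \| dv_h \|^2 = \| v_h \|_{V^k}^2$, so trivially $\| dv_h \| \le \| v_h \|_{V^k}$.

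Given this, I would simply concatenate the bounds from \Cref{lem: smoother}. For any $v_h \in V_h^k$ with $0 < k < n$, the lemma provides
\begin{align}
    \| v_h \|_{V^k} \lesssim \| v_h \|_{D^k} \lesssim \| v_h \|_{S^k} \lesssim \| h^{-1} v_h \|.
\end{align}
Combining the elementary bound above with the outer endpoints of this chain yields
\begin{align}
    \| dv_h \| \le \| v_h \|_{V^k} \lesssim \| h^{-1} v_h \|,
\end{align}
which is exactly the claimed inverse inequality. The restriction $0 < k < n$ is inherited directly from the hypotheses of \Cref{lem: smoother}, which is where the VEM stabilization estimates of \Cref{lem:VEM_stability_ops} enter and make the argument genuinely case-dependent (facet versus edge).

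There is essentially no obstacle here: all the analytic content has already been absorbed into \Cref{lem: smoother}, and in particular the delicate step—showing that $\sum_i \| dv_{h,i} \|^2 \lesssim \| v_h \|_{S^k}^2$ for both the facet and edge spaces—was carried out in the proof of that lemma using the stabilization identities \eqref{eq:stability_face} and \eqref{eq:stability_edge}. The only thing worth being careful about is not to overstate the result: the corollary bounds $\| dv_h \|$, not the full graph norm, but since $\| dv_h \| \le \| v_h \|_{V^k}$ this is immediate and in fact the stronger statement $\| v_h \|_{V^k} \lesssim \| h^{-1} v_h \|$ is what the lemma already gives. Thus the proof reduces to a single invocation of \Cref{lem: smoother}.
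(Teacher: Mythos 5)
Your proposal is correct and follows exactly the paper's own argument: the corollary is obtained from \Cref{lem: smoother} via the trivial bound $\| dv_h \| \le \| v_h \|_{V^k}$ from the graph norm \eqref{eq: graph norm}. Nothing further is needed.
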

\begin{proof}
    The result follows from \Cref{lem: smoother}, since $\|dv_h\|\leq \|v_h\|_{V^k}.$
\end{proof}


\subsubsection{An auxiliary space preconditioner for edge elements}
\label{sub: precond edge}

The first case we consider is $k = 1$ with $n = 3$.
Given $v_h \in V_h^1$, \Cref{thm:discrete_decomp} provides the following decomposition:
\begin{align}
    v_h &= \tilde{v}_h + \Pi_h^0\psi_h + \grad p_h, &
    \|h^{-1}\tilde{v}_h\|^2+\|\psi_h\|_1^2 + \|p_h\|_1^2 &\lesssim \|v_h\|_{V^1}^2.
\end{align}
Thus, we are led to define the following auxiliary spaces with corresponding inner products and transfer operators:
\begin{subequations}
\begin{align}
    W_1 &:= W_h^1, & 
    \langle A_1 \psi_h, \psi_h \rangle &= \| \psi_h \|_1^2, &
    \pi_1 &:= \Pi_h^1 : W_h^1 \to V_h^1, \\
    W_2 &:= W_h^0 = V_h^0, & 
    \langle A_2 p_h, p_h \rangle &= \| p_h \|_1^2, &
    \pi_2 &:= \grad : V_h^0 \to V_h^1.
\end{align}
\end{subequations}

\begin{theorem}\label{thm:auxprecond_edge}
    Let $A$ denote the operator associated with the inner product of $V_h^1$ and let
    \begin{align}
        B = \left( S^1 \right)^{-1} 
        + \Pi_h^1 A_1^{-1} \left(\Pi_h^1\right)^*
        + \grad A_2^{-1} \grad^*
    \end{align}
    then the spectral condition number $\kappa(BA)$ is bounded, independent of the mesh size $h$.
\end{theorem}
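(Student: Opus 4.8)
The plan is to verify the three hypotheses of the abstract framework \Cref{thm:auxprecond}, from which the bound on $\kappa(BA)$ follows immediately. The structure of the problem has been arranged precisely so that each hypothesis matches a result proven earlier in the paper, so the work is mostly a matter of citing the right lemma for each of the three conditions.

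\emph{Condition 1 (smoother continuity).} I would show that $\| v_h \|_{V^1} \le c_s \| v_h \|_{S^1}$ for all $v_h \in V_h^1$. This is exactly the chain of inequalities in \Cref{lem: smoother}, which gives $\| v_h \|_{V^k} \lesssim \| v_h \|_{D^k} \lesssim \| v_h \|_{S^k}$. Taking $k = 1 = n-2$ (with $n = 3$) yields the claim directly, so $c_s$ is the product of the hidden constants there.

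\emph{Condition 2 (transfer operator continuity).} Here there are two transfer operators to check. For $\pi_1 = \Pi_h^1$ acting on $W_1 = W_h^1$ with norm $\| \cdot \|_1$, I would invoke \Cref{cor:Wk_stab_in_Vk}, which states $\| \Pi_h^k \psi_h \|_{V^k} \lesssim \| \psi_h \|_1$ for $\psi_h \in W_h^k$; specializing to $k = 1$ gives $\| \pi_1 \psi_h \|_{V^1} \lesssim \| \psi_h \|_{W_1}$. For $\pi_2 = \grad$ acting on $W_2 = V_h^0$ with norm $\| \cdot \|_1$, continuity is elementary: $\| \grad p_h \|_{V^1}^2 = \| \grad p_h \|^2 + \| \crl \grad p_h \|^2 = \| \grad p_h \|^2 \le \| p_h \|_1^2$, since $\crl \grad = 0$. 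Thus both $c_1$ and $c_2$ are finite and $h$-independent.

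\emph{Condition 3 (stable decomposition).} This is where the main content lives, and it is supplied by the regular decomposition \Cref{thm:discrete_decomp}. Given $v_h \in V_h^1$, that theorem produces $\tilde v_h \in V_h^1$, $\psi_h \in W_h^1$, and $p_h \in V_h^0$ with $v_h = \tilde v_h + \Pi_h^1 \psi_h + \grad p_h$ and the bounds $\| h^{-1} \tilde v_h \| + \| \psi_h \|_1 \lesssim \| d v_h \|$ together with $\| p_h \|_{V^0} \lesssim \| v_h \|_{V^1}$. Setting $v_0 := \tilde v_h$, $w_1 := \psi_h$, and $w_2 := p_h$, the decomposition $v_h = v_0 + \pi_1 w_1 + \pi_2 w_2$ holds. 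To match the required stability estimate $\| v_0 \|_S^2 + \sum_j \| w_j \|_{W_j}^2 \le c_0 \| v_h \|_V^2$, I would bound $\| v_0 \|_{S^1} = \| \tilde v_h \|_{S^1} \lesssim \| h^{-1} \tilde v_h \|$ using the top inequality of \Cref{lem: smoother} (so that the high-frequency term, measured in the smoother norm, is controlled by its $h^{-1}$-weighted $L^2$ norm). Combining the three bounds gives $\| v_0 \|_{S^1}^2 + \| \psi_h \|_1^2 + \| p_h \|_1^2 \lesssim \| d v_h \|^2 + \| v_h \|_{V^1}^2 \lesssim \| v_h \|_{V^1}^2$, since $\| d v_h \| \le \| v_h \|_{V^1}$ and $\| p_h \|_1 = \| p_h \|_{V^0}$.

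The one subtlety I would watch carefully—and what I expect to be the main obstacle—is the bookkeeping on norms in Condition 3: the abstract framework measures the leading term $v_0$ in the smoother norm $\| \cdot \|_S$, whereas \Cref{thm:discrete_decomp} delivers its estimate in the $h^{-1}$-weighted $L^2$ norm. Bridging these requires the \emph{upper} bound $\| v_h \|_{S^k} \lesssim \| h^{-1} v_h \|$ from \Cref{lem: smoother}, which runs in the opposite direction to the bound used for Condition 1; one must be sure to apply the correct end of that two-sided estimate. Once this is in place, all three constants are $h$-independent and \Cref{thm:auxprecond} yields $\kappa(BA) \le c_0^2(c_s^2 + c_1^2 + c_2^2)$ independent of $h$, which is the claim.
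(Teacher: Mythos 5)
Your proposal is correct and follows essentially the same route as the paper's proof: verify the three hypotheses of \Cref{thm:auxprecond} by citing \Cref{lem: smoother} for the smoother, \Cref{cor:Wk_stab_in_Vk} together with $\crl\grad=0$ for the transfer operators, and \Cref{thm:discrete_decomp} plus the upper bound $\|\tilde v_h\|_{S^1}\lesssim\|h^{-1}\tilde v_h\|$ from \Cref{lem: smoother} for the stable decomposition. The subtlety you flag about using the correct end of the two-sided smoother estimate is exactly the point the paper makes in its third step.
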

\begin{proof}
    We verify the three conditions of \Cref{thm:auxprecond}.
    \begin{enumerate}
    \item Continuity of the smoother was shown in \Cref{lem: smoother}.
    \item The transfer operator $\Pi_h^1: W_h^1 \to V_h^1$ is continuous due to \Cref{cor:Wk_stab_in_Vk}. Since the second transfer operator is a differential $d: V_h^{k-1} \to V_h^k$, its continuity follows from the fact that $dd = 0$:
    \begin{align} \label{eq: continuity d}
        \| d p_h\|_{V^k} &= \| d p_h\| \le \| p_h\|_{V^{k - 1}}, &
        \forall p_h &\in V_h^{k - 1}.
    \end{align}
    \item The decomposition is given by \Cref{thm:discrete_decomp}. For the stability of the decomposition, we only need to show that $\| \tilde{v}_h \|_{S^1} \lesssim \| h^{-1} \tilde{v}_h \|$, which was proven in \Cref{lem: smoother} (last inequality).
    \end{enumerate}
    The robustness with respect to $h$ follows from the fact that the constants $c_i$ with $i \in \{s, 0, 1, 2\}$ are independent of $h$.
\end{proof}

\begin{remark}[Two-dimensional facet elements] \label{rem: 2D preconditioner}
    The only difference for $n = 2$ and $k = 1$, is that $\Pi_2$ needs to be redefined as $\crl: V_h^0 \to V_h^1$. With this minor adjustment, the analogue of \Cref{thm:auxprecond} follows in the same manner.
\end{remark}

\subsubsection{An auxiliary space preconditioner for facet elements in 3D}
\label{sub: precond face}
Given $v_h \in V_h^2$ with $n = 3$, \Cref{cor:deep_decomp} provides the following decomposition:
\begin{align}
    v_h &= \tilde{v}_h + \Pi_h^2 \psi_h + \crl \tilde{p}_h + \crl \Pi_h^1 \phi_h, &
    \|h^{-1}\tilde{v}_h\|^2+\|\psi_h\|_1^2 + \| h^{-1} \tilde{p}_h\|^2 + \| \phi_h \|_1^2 &\lesssim \|v_h\|_{V^2}^2.
\end{align}
Thus, we are led to define the following auxiliary spaces, inner products and transfer operators:
\begin{subequations}
\begin{align}
    W_1 &:= W_h^2, & 
    \langle A_1 \psi_h, \psi_h \rangle &= \| \psi_h \|_1^2, &
    \pi_1 &:= \Pi_h^2 : W_h^2 \to V_h^2, \\
    W_2 &:= V_h^1, & 
    \langle A_2 \tilde{p}_h, \tilde{p}_h \rangle &= \| \tilde{p}_h \|_{S^1}^2, &
    \pi_2 &:= \crl : V_h^1 \to V_h^2 \\
    W_3 &:= W_h^1, & 
    \langle A_3 \phi_h, \phi_h \rangle &= \| \phi_h \|_1^2, &
    \pi_3 &:= \crl \Pi_h^1 : W_h^1 \to V_h^2.
\end{align}
\end{subequations}

\begin{theorem}\label{thm:auxprecond_facet}
    For $n = 3$, let $A$ denote the operator associated with the inner product of $V_h^2$ and let
    \begin{align}
        B = \left( S^2 \right)^{-1} 
        + \Pi_h^2 A_1^{-1} \left(\Pi_h^2\right)^*
        + \crl \left( (S^1)^{-1} + \Pi_h^1 A_3^{-1} \left(\Pi_h^1\right)^* \right) \crl^*
    \end{align}
    then the spectral condition number $\kappa(BA)$ is bounded, independent of the mesh size $h$.
\end{theorem}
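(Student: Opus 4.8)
The plan is to verify the three hypotheses of \Cref{thm:auxprecond} for the present data — smoother $S = S^2$, auxiliary spaces $W_1 = W_h^2$, $W_2 = V_h^1$, $W_3 = W_h^1$, and transfer operators $\pi_1 = \Pi_h^2$, $\pi_2 = \crl$, $\pi_3 = \crl \Pi_h^1$ — closely mirroring the argument for the edge preconditioner in \Cref{thm:auxprecond_edge}. One first notes that the stated $B$ is exactly the abstract preconditioner \eqref{eq:abstract_auxprecond}, since $A_2 = S^1$ and grouping the two $\crl$-terms reproduces the factored form. The boundedness of $\kappa(BA)$ then follows from the conclusion of \Cref{thm:auxprecond}, once each constant $c_s, c_1, c_2, c_3, c_0$ is shown to be independent of $h$.

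For the first condition (continuity of $S^{-1}$) I would invoke the first inequality of \Cref{lem: smoother} with $k = 2$, giving $\|v_h\|_{V^2} \lesssim \|v_h\|_{S^2}$ with $h$-independent $c_s$. The second condition requires continuity of each transfer operator, and this is where the three components differ. For $\pi_1 = \Pi_h^2$, the bound $\|\Pi_h^2 \psi_h\|_{V^2} \lesssim \|\psi_h\|_1$ is precisely \Cref{cor:Wk_stab_in_Vk} with $k = 2$. For the two $\crl$-type operators I would first use $\dive \crl = 0$ to reduce the graph norm on $V_h^2$ to the $L^2$-norm, i.e. $\|\crl w\|_{V^2} = \|\crl w\|$. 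Then for $\pi_2$ I bound $\|\crl \tilde{p}_h\| \le \|\tilde{p}_h\|_{V^1} \lesssim \|\tilde{p}_h\|_{S^1}$, the last step again by \Cref{lem: smoother}; here it is essential that the norm on $W_2 = V_h^1$ is the smoother norm $\|\cdot\|_{S^1}$ rather than $\|\cdot\|_{V^1}$, in contrast to the edge case. For $\pi_3$ I write $\|\crl \Pi_h^1 \phi_h\| \le \|\Pi_h^1 \phi_h\|_{V^1} \lesssim \|\phi_h\|_1$, the final step being \Cref{cor:Wk_stab_in_Vk} with $k = 1$.

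The third condition is supplied almost verbatim by \Cref{cor:deep_decomp}, identifying $v_0 = \tilde{v}_h$, $w_1 = \psi_h$, $w_2 = \tilde{p}_h$, $w_3 = \phi_h$. The only remaining task is to convert the $h^{-1}$-weighted $L^2$ bounds on $\tilde{v}_h$ and $\tilde{p}_h$ in \Cref{cor:deep_decomp} into the smoother norms demanded by \Cref{thm:auxprecond}, namely $\|\tilde{v}_h\|_{S^2} \lesssim \|h^{-1}\tilde{v}_h\|$ and $\|\tilde{p}_h\|_{S^1} \lesssim \|h^{-1}\tilde{p}_h\|$, both being the last inequality of \Cref{lem: smoother}. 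Summing yields $\|\tilde{v}_h\|_{S^2}^2 + \|\psi_h\|_1^2 + \|\tilde{p}_h\|_{S^1}^2 + \|\phi_h\|_1^2 \lesssim \|v_h\|_{V^2}^2$ with $h$-independent $c_0$.

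I do not expect a genuine obstacle, since every ingredient is assembled in the preceding lemmas; the step demanding the most care is the continuity of the composite transfer operator $\pi_3 = \crl \Pi_h^1$, where one must correctly combine the annihilation $\dive \crl = 0$ with the $V^k$-stability of the nodal interpolant, together with the bookkeeping in the third condition that matches each decomposition term to its appropriate auxiliary norm.
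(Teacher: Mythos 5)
Your proposal is correct and follows essentially the same route as the paper: verifying the three hypotheses of \Cref{thm:auxprecond} via \Cref{lem: smoother} for the smoother, \Cref{cor:Wk_stab_in_Vk} together with $\dive\crl = 0$ for the transfer operators, and \Cref{cor:deep_decomp} for the stable decomposition. If anything you are slightly more explicit than the paper in recording that $\|\tilde{p}_h\|_{S^1} \lesssim \|h^{-1}\tilde{p}_h\|$ is also needed (not only the bound on $\tilde{v}_h$) to close the third condition.
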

\begin{proof}
    We verify the three conditions of \Cref{thm:auxprecond}.
    \begin{enumerate}
    \item Continuity of the smoother $S^2$ was shown in \Cref{lem: smoother}.
    \item The transfer operators $\Pi_h^k: W_h^k \to V_h^k$ are continuous due to \Cref{cor:Wk_stab_in_Vk}, so $\Pi_1$ is continuous. For $\Pi_2$, we use the continuity of the $\crl$ \eqref{eq: continuity d} combined with \Cref{lem: smoother} to obtain
    \begin{align}
        \| \Pi_2 \tilde{p}_h \|_{V^2} 
        = \| \crl \tilde{p}_h \|_{V^2} = \| \crl \tilde{p}_h \| 
        \le \| \tilde{p}_h \|_{V^1} 
        \lesssim \| \tilde{p}_h \|_{S^1} 
    \end{align}
    Finally, the continuity of $\Pi_3$ follows by combining \Cref{cor:Wk_stab_in_Vk} with \eqref{eq: continuity d}. 
    \item The decomposition is given by \Cref{cor:deep_decomp}. Again, the stability of the decomposition follows from $\| \tilde{v}_h \|_{S^2} \lesssim \| h^{-1} \tilde{v}_h \|$, shown in \Cref{lem: smoother}.
    \end{enumerate}
    Since the constants $c_i$ with $i \in \{s, 0, 1, 2, 3\}$ are independent of $h$, the preconditioner is robust.
\end{proof}

\subsection{A multiplicative variant}
\label{sub: multiplicative}

The preconditioners in \Cref{thm:auxprecond_edge} and \Cref{thm:auxprecond_facet} can be seen as \emph{additive} preconditioners. A \emph{multiplicative} variant of these operators can be constructed as follows.
For a given residual $r_0$, we first apply the smoother to create $z_0 \in V_h^k$:
\begin{subequations}
\begin{align}
    z_0 &= S^{-1}r_0.
\end{align}
Then for each $j \in \{1, \ldots, J\}$, we apply the following steps sequentially:
\begin{align}
    r_j &= r_{j - 1} - Az_{j - 1} \\
    z_j &= z_{j - 1} + \pi_j A_j^{-1} \pi_j^* r_j
\end{align}
\end{subequations}

Recall that $A$ is the operator from \eqref{eq: A inner product}. 
Finally, the multiplicative variant of the preconditioner outputs $z_J \in V_h^k$. 
\section{Numerical experiments}\label{sec:numex}

In this section, we present numerical experiments to illustrate the performance of auxiliary space preconditioning for virtual element discretizations of the $H^{\dive}$ projection problem \eqref{eq:divdiv} and the Darcy problem \eqref{eq:darcy} in 2D. 

The experiments are performed in the open source Julia-based virtual element library VirtuE and the run scripts are available at \cite{repo}. The numerical experiments are performed on a laptop with an Intel Core i7-8565U CPU and 16 GB of RAM. Throughout our tests we will use $D^k$ of \Cref{lem: smoother} as the smoother in the auxiliary space preconditioner. In our experiments, its performance was better in all cases compared to the smoother $S^k$ obtained from the stabilization term. We will refer to the additive preconditioner from \Cref{rem: 2D preconditioner} as $B_{\add}$ and the multiplicative variant from \Cref{sub: multiplicative} as $B_{\mult}$.

For each problem, we compare the condition numbers and the performance of GMRES (without restarts) for the original problem $Av=b$ against the preconditioned system $BAv=Bb$. 
We will conduct two types of numerical experiments. The first tests the robustness of the preconditioner with respect to the mesh size derived in \Cref{sec:auxprecond}, whereas the second ventures outside the theory to investigate the dependence of the preconditioner on element aspect ratios. 

\subsection{\texorpdfstring{$H^{\dive}$}{Hdiv} projection problem}
\label{sub: Hdiv projection problem}

Let us start by describing the VE discretization of problem \eqref{eq:divdiv}: find $v_h\in V^{n-1}_h$ such that 
\begin{align}\label{eq:divdiv_h}
	(v_h,w_h)_h+(\dive v_h,\dive w_h) &= (f,w_h), & 
    \forall w_h&\in V^{n-1}_h.
\end{align}
Here, $(v_h,w_h)_h$ is the mass matrix of the facet space, assembled according to \cite{Brezzi2014VEMMixedPrinciples}.
%
The function $f$ on the right-hand side is given by
\begin{align}\label{eq:divdiv_data1}
    f(x, y) = - \begin{bmatrix}
        2\pi \cos(2\pi x)  \sin(4\pi  y) \\
        4\pi  \cos(4\pi  y) \sin(2\pi x)
    \end{bmatrix}.
\end{align}
The right-hand side is assembled by first projecting $w_h$ to the piecewise linears.
We construct the mesh by subdividing a structured, $N \times N$ Cartesian grid into triangles, squares, and pentagons, as illustrated in \Cref{fig:diamondmesh_2D}.

\begin{figure}[htbp]
    \centering
    \includegraphics[%
        width=0.49\textwidth]{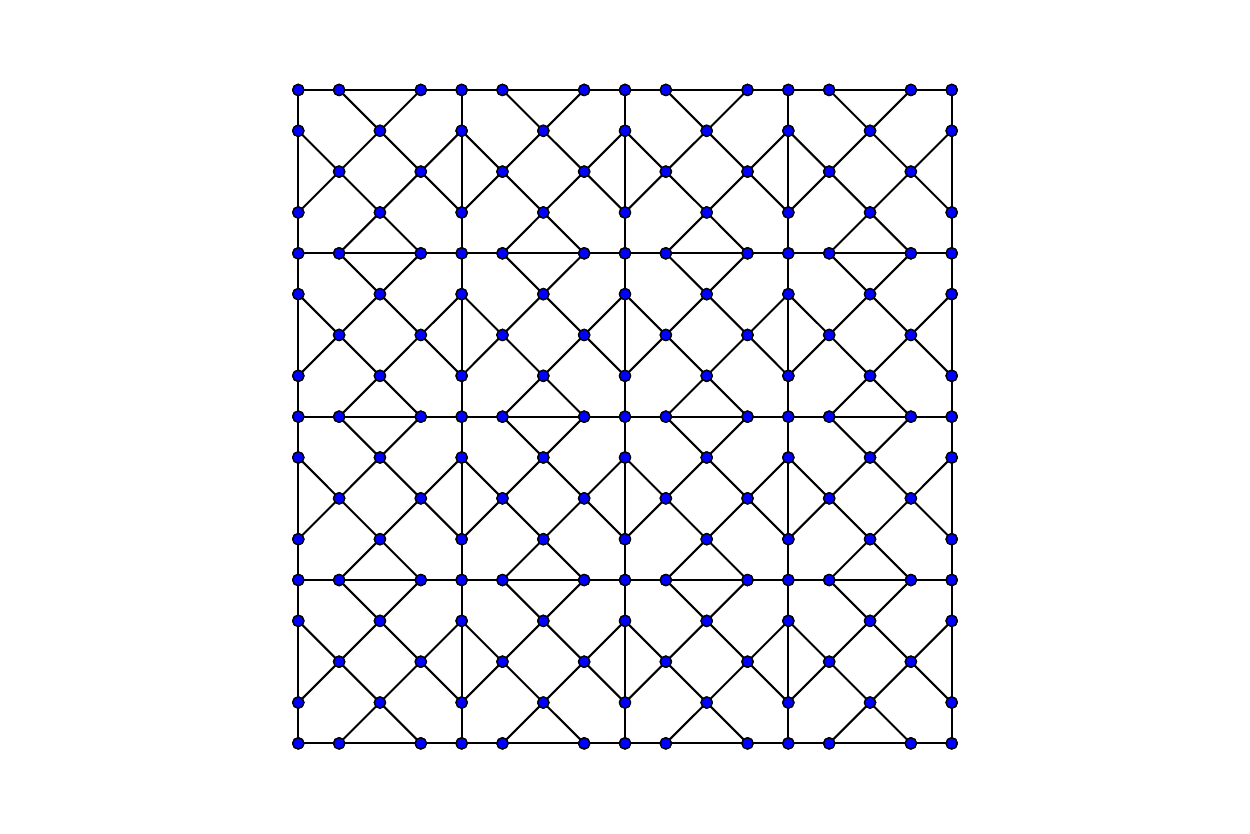}%
        \label{fig:diamondmesh_2D_1}
    \hfill
    \includegraphics[%
        width=0.49\textwidth]{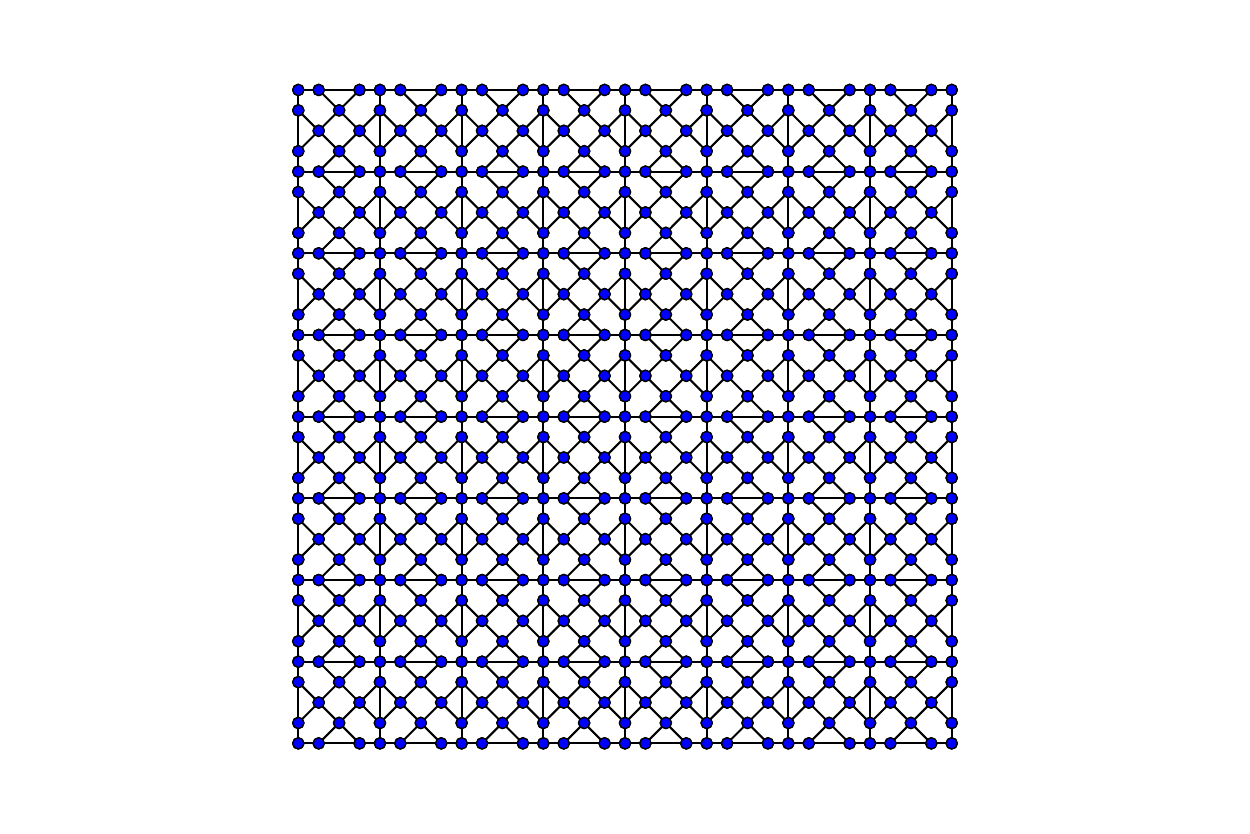}%
    \label{fig:diamondmesh_2D_11}
    \caption{The two coarsest meshes (with $N=4,8$ respectively) used in the mesh size dependency experiments of \Cref{sub: Hdiv projection problem,sub: Darcy problem}. This sequence of meshes satisfies \Cref{ass:mesh}.}
    \label{fig:diamondmesh_2D}
\end{figure}

The results are shown in \Cref{tab:divdiv_meshsize}, in which $A$ denotes the matrix representation of the left-hand side of \eqref{eq:divdiv_h}. We see that both in terms of conditions numbers and GMRES iterations, the auxiliary space preconditioned system has essentially constant values, and the smallest values. Most importantly, the number of GMRES iterations is not growing with decreasing mesh size, for both the additive and multiplicative variants. 
%
\begin{table}[htbp]
    \centering
    \caption{Performance of the nodal auxiliary space preconditioner on the $H^{\dive}$ projection problem for varying mesh sizes.}
    \begin{tabular}{cc|cccc|cccc}
            &          & \multicolumn{4}{c|}{Condition numbers}                                                           & \multicolumn{4}{c}{GMRES iterations}                                    \\ 
        $N$ &  \#dof & $A$         & $\diag(A)^{-1} A$         & $B_{\add}A$    & $B_{\mult}A$     & $A$ & $\diag(A)^{-1} A$ & $B_{\add}A$ & $B_{\mult}A$ \\ \hline
        4   &  312     & 3.19E03     & 3.36E03                   & 4.48    & 9.29     & 114 & 24                & 14   & 16    \\
        8   &  1200    & 1.28E04     & 1.35E04                   & 4.50    & 9.53     & 234 & 121               & 16   & 17    \\
        16  &  4704    & 5.13E04     & 5.39E04                   & 4.51    & 9.60     & 427 & 220               & 15   & 17    \\
        32  &  18624   & 2.05E05     & 2.15E05                   & 4.51    & 9.61     & 739 & 350               & 15   & 15    
    \end{tabular} 
    \label{tab:divdiv_meshsize}
\end{table}


\subsection{Darcy problem}
\label{sub: Darcy problem}
The discrete Darcy problem reads as follows. Given data $f\in [L^2(\Omega)]^n$ and $g\in L^2(\Omega)$, find $u_h\in V^{n-1}_h$ and $p_h\in V^{n}_h$ such that
\begin{subequations}\label{eq:darcy_h}
\begin{align}
    (u_h,v_h)_h - (p_h,\dive v_h) &= (f,v_h), &\forall v_h&\in V^{n-1}_h, \\
    -(q_h,\dive u_h) &= (g,q_h), &\forall q_h&\in V^{n}_h.
\end{align}
\end{subequations}
We denote the corresponding system matrix by $\mcA$ and the mass matrix corresponding to $(u_h,v_h)_h$ by $M_u$. 

Next, we note that \eqref{eq:darcy_h} is well-posed in the product space $H^{\dive}(\Omega) \times L^2(\Omega)$.
Following the framework of norm-equivalent preconditioning \cite{mardal2011preconditioning}, the Riesz representation operator forms a robust preconditioner for this problem. This involves solving an $H^{\dive}$ projection problem and a projection in $L^2$. We therefore construct a block diagonal preconditioner $\Bf$ where the first block is the nodal auxiliary space preconditioner $B$ and the second block is the inverse of the (diagonal) mass matrix $M_p$ on $\mathbb{P}_0$. 
We compare its performance to a naively constructed preconditioner composed of the diagonal of the mass matrices. Namely, we compare:
\begin{align}
    \Bf &:= \begin{bmatrix} B & 0 \\ 0 & M_p^{-1}\end{bmatrix}, & 
    \Bf_{\diag} &:= \left(\diag\begin{bmatrix} M_u & 0 \\ 0 & M_p\end{bmatrix}\right)^{-1}.
\end{align}

Let $f$ be as in \eqref{eq:divdiv_data1} and let $g=-40\pi^2 \cos(2\pi x)  \sin(4\pi  y).$ 
We use the same sequence of meshes as depicted in \Cref{fig:diamondmesh_2D} and compare the 
numbers of iterations for GMRES on the original and preconditioned systems. The results are shown in \Cref{tab:darcy_meshsize}. The condition numbers of the last iteration are not computed. We note that the number of GMRES iterations is growing with mesh size for the original and diagonal preconditioned systems. In contrast, the auxiliary space preconditioned systems require a stable number of iterations, for both additive and multiplicative variants. 

\begin{table}[htbp]
    \centering
    \caption{Performance of the nodal auxiliary space preconditioner on the Darcy problem for varying mesh sizes. Computation of the condition numbers on the finest grid was not feasible in our implementation and are therefore omitted.}
    \begin{tabular}{cc|cccc|cccc}
            &     & \multicolumn{4}{c|}{Condition number}       & \multicolumn{4}{c}{GMRES iterations}                                    \\ 
        $N$ &  \#dof & $\mcA$ & $\Bf_{\diag} \mcA$ & $\Bf_{\add}\mcA$ & $\Bf_{\mult}\mcA$ & $\mcA$ & $\Bf_{\diag} \mcA$ & $\Bf_{\add}\mcA$ & $\Bf_{\mult}\mcA$ \\ \hline
        4   &  456       &  3.65E01 & 7.59E01   & 3.12   &   3.21      & 84   & 17    & 29  & 29  \\
        8   &  1776      &  1.29E02 & 1.50E02   & 3.14   &   3.27      & 136  & 34    & 34  & 32  \\
        16  &  7008      &  5.00E02 & 2.98E02   & 3.17   &   3.29      & 211  & 32    & 35  & 35  \\
        32  &  27840     &  -       & -         & -      &   -         & 384  & 53    & 35  & 35 
    \end{tabular} 
    \label{tab:darcy_meshsize}
\end{table}

\subsection{Aspect ratio tests}
\label{sub: Aspect ratio tests}

In this next experiment, we consider a sequence of meshes on the unit square $\Omega=[0,1]^2$ induced by a parameter $\epsilon>0$. For each $\epsilon$, we intersect a background mesh with the line $y=0.5+\epsilon$. At each intersection point we create a new vertex and split the adjacent elements, see \Cref{fig:mesh_2D}. This example therefore corresponds to a naive remeshing procedure to conform a background mesh to an independently placed, embedded interface.
Strictly speaking, this example falls out of the scope of our theory, in particular \Cref{ass:mesh} is violated because the aspect ratios will increase as $\epsilon$ decreases.
We measure the (maximum) aspect ratio of the mesh $\mesh$ by
\begin{align} 
    \alpha:=\max_{K\in\mesh}\frac{\text{diam}(K)^2}{|K|}.
\end{align} 

\begin{figure}[htbp]
    \centering
    \includegraphics[width=0.49\textwidth]{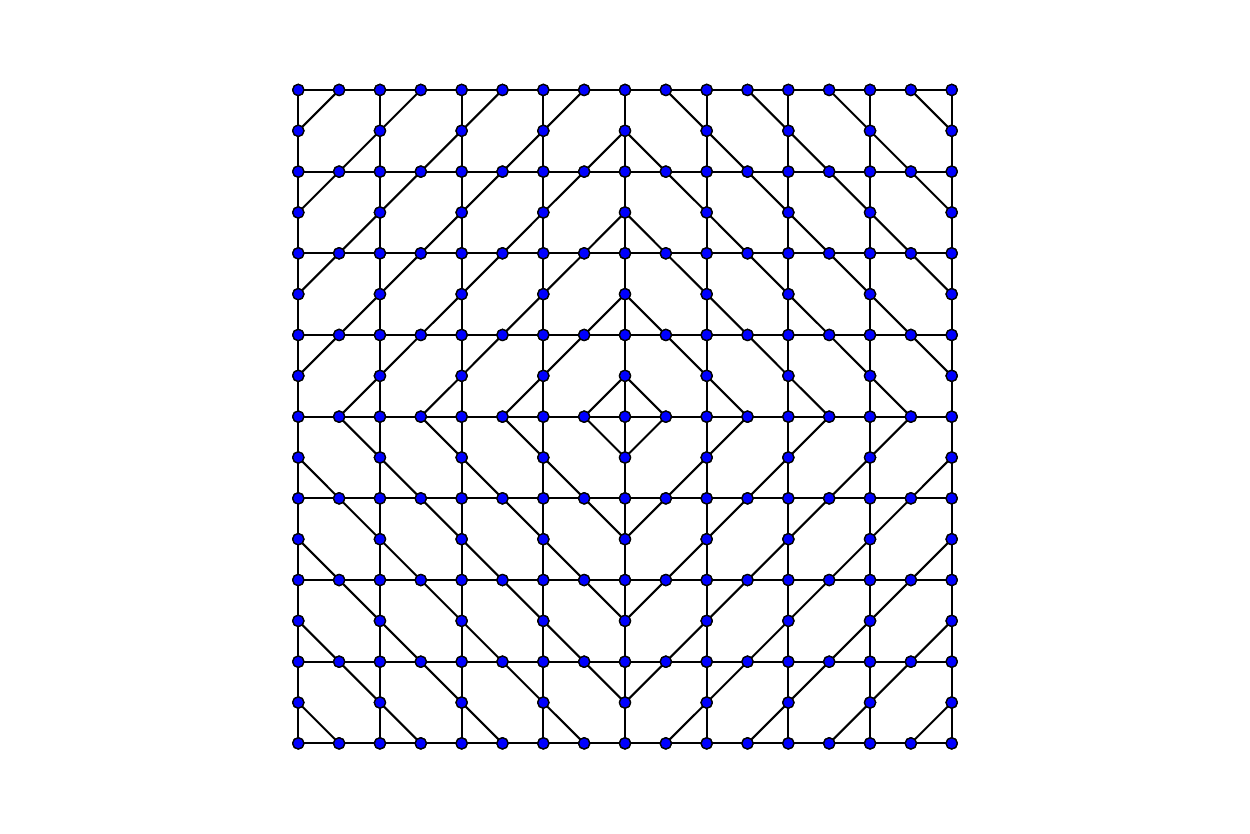}
    \hfill
    \includegraphics[width=0.49\textwidth]{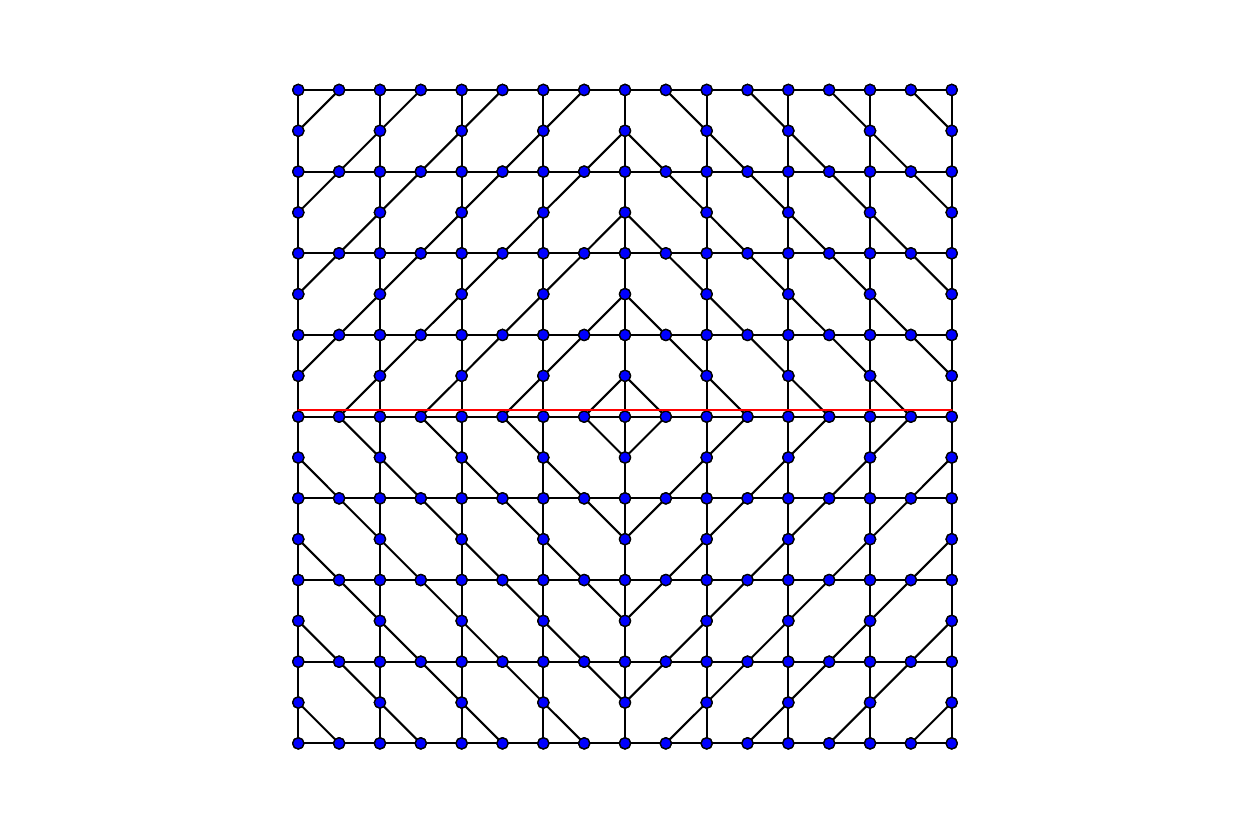}
    \caption{Meshes used in the numerical experiments for the aspect ratio tests. The elements crossed by the red line at $y = 0.5+\epsilon$ are split in two by introducing new nodes at the intersections with mesh edges. }
    \label{fig:mesh_2D}
\end{figure}

\subsubsection{$H^{\dive}$ projection problem}

On the mesh depicted \ekn{in \Cref{fig:mesh_2D}} we solve the $H^{\dive}$ projection problem with 
\begin{align}\label{eq:divdiv_data2}
    f = \begin{bmatrix}
        \cos(x)\sinh(y) \\
        \sin(x)\cosh(y)
    \end{bmatrix},
\end{align} 
and compare the condition number and number of iterations for GMRES on the original and preconditioned systems.
The results are shown in \Cref{tab:divdiv}. Although the aspect ratio is increasing, the auxiliary space preconditioner performs well, for both additive or multiplicative variants. The number of GMRES iterations is stable and relatively small, and the same holds for the condition numbers.

\begin{table}[htbp]
    \centering
    \caption{$H^{\dive}$ projection problem: Comparison of the condition numbers and number of iterations for GMRES on the original and preconditioned systems for meshes with increasing aspect ratios.}
    \begin{tabular}{cc|cccc|cccc}
        &                                                 & \multicolumn{4}{c|}{Condition numbers}                                                           & \multicolumn{4}{c}{GMRES iterations}                                    \\ 
    $\epsilon$ &  $\alpha$ & $A$         & $\diag(A)^{-1} A$         & $B_{\add}A$    &  $B_{\mult}A$   & $A$ & $\diag(A)^{-1} A$ & $B_{\add}A$ & $B_{\mult}A$ \\ \hline
    -          &  4.00E00       & 5.18E03     & 3.62E03       & 4.33    & 11.5      & 126 & 116               & 16  &  18 \\
    1E-2       &  7.94E00       & 2.00E04     & 1.12E04       & 4.49    & 13.0      & 169 & 204               & 16  &  18 \\
    1E-4       &  6.27E02       & 1.91E06     & 7.74E05       & 4.93    & 11.3      & 179 & 270               & 16  &  18 \\
    1E-6       &  6.25E04       & 1.91E08     & 7.70E07       & 5.00    & 11.3      & 174 & 275               & 16  &  17 \\
    1e-8       &  6.25E06       & 1.91E10     & 7.70E09       & 5.00    & 11.3      & 177 & 159               & 16  &  17
    \end{tabular} 
    \label{tab:divdiv} 
\end{table}

\subsubsection{Darcy problem}

Finally, we perform the same aspect ratio experiment for the Darcy problem \eqref{eq:darcy_h}. 
Let here $f$ be as in \eqref{eq:divdiv_data2} and let $g=0$. 
In \Cref{tab:darcy} we compare the results. As $\alpha$ increases, the number of GMRES iterations increases for the original and diagonal preconditioned systems, but remains constant for the auxiliary space preconditioned system. This is again true for both additive and multiplicative variants, indicating a promising potential of this approach for general polygonal meshes.

\begin{table}[htbp]
    \centering
    \caption{Darcy problem: Comparison of the condition numbers and 
    number of iterations for GMRES on the original and preconditioned systems for meshes with increasing aspect ratios.}
    \begin{tabular}{cc|cccc|cccc}
        &  & \multicolumn{4}{c|}{Condition numbers}  & \multicolumn{4}{c}{GMRES iterations}                                    \\ 
    $\epsilon$ &  $\alpha$    & $\mcA$ & $\Bf_{\diag}\mcA$ & $\Bf_{\add}\mcA$ & $\Bf_{\mult}\mcA$ & $\mcA$ & $\Bf_{\diag}\mcA$ & $\Bf_{\add}\mcA$ & $\Bf_{\mult}\mcA$ \\ \hline
    -          &  4.00E00     & 4.72E01 & 9.88E01 & 3.21  &  3.29     & 82  & 58    & 32  &  32 \\
    1E-2       &  7.94E00     & 8.17E01 & 1.54E02 & 3.21  &  3.29     & 117 & 167   & 36  &  36 \\
    1E-4       &  6.27E02     & 8.17E03 & 2.80E04 & 3.21  &  3.31     & 116 & 230   & 35  &  36 \\
    1E-6       &  6.25E04     & 8.17E05 & 2.75E07 & 3.21  &  3.31     & 111 & 176   & 35  &  36 \\
    1E-8       &  6.25E06     & 1.43E09 & 2.75E10 & 3.21  &  3.31     & 107 & 414   & 34  &  35
    \end{tabular} 
    \label{tab:darcy} 
\end{table}
\section{Conclusion}\label{sec:conclusion}

In this paper, we have formulated and analyzed nodal auxiliary space preconditioners for edge and facet Virtual Element spaces. The preconditioner effectively consists of a series of elliptic problems on nodal VE spaces combined with appropriate smoothers. Our analysis shows that the performance of these preconditioners is independent of the mesh size, which is supported by numerical experiments for the $H^{\dive}$ projection problem and the Darcy problem, in 2D. 

Future research directions on this topic include the following. First, while the theory includes three-dimensional problems, the performance of the preconditioner needs to be verified numerically. Second, the theory can likely be extended to higher-order VE spaces, see e.g. \cite{VeiBeiBrezzMarRuss16}, by deriving a regular decomposition of the associated, exact co-chain complex and computing appropriate transfer operators from the higher-order $H^1$-conforming VE spaces. Third, the assumptions on the mesh may be weakened, in particular concerning the convexity of the elements, since the numerical experiments indicate a wider applicability. 

\ekn{Finally, we note that the proposed preconditioner involves solving global, nodal systems at each iteration. Therefore, our current implementation does not provide a convincing speed-up in terms of solution time for the model problems. Nevertheless, this preconditioner serves as a starting point for the development of alternative solvers in which the global solves are replaced by scalable, spectrally equivalent operators. These can be obtained, for example, by employing (algebraic) multi-grid methods \cite{Kolev2009,Kolev2012}. The computational efficiency of the resulting preconditioners forms a topic for future research.}

In conclusion, we have generalized the Hiptmair-Xu nodal auxiliary space preconditioner to the VEM framework. Our numerical results furthermore show promising potential of the preconditioner for problems on meshes with high aspect ratios, obtained by naively adapting a grid to conform to an embedded interface. 

\section*{Acknowledgments}
	The authors warmly thank Lorenzo Mascotto and Lourenco Beir\~{a}o da Veiga for valuable discussions concerning \Cref{lem:edge_interpol}, and Ana Budi\v{s}a for suggesting the multiplicative variant of the preconditioner (\Cref{sub: multiplicative}).


\bibliographystyle{siamplain}
\bibliography{references}


\end{document}